\numberwithin{equation}{section}
\newtheorem{Theorem}{Theorem}[section]
\newtheorem*{Theorem*}{Theorem}
\newtheorem{Corollary}[Theorem]{Corollary}
\newtheorem{Lemma}[Theorem]{Lemma}
\newtheorem{Proposition}[Theorem]{Proposition}
\theoremstyle{definition}
\newtheorem{Example}[Theorem]{Example}
\newtheorem{Remark}[Theorem]{Remark}
\newcommand{\Section}[1]{\hyperref[sec:#1]{Section~\ref*{sec:#1}}}
\newcommand{\Table}[1]{\hyperref[tab:#1]{Table~\ref*{tab:#1}}}
\newcommand{\eqn}[1]{\hyperref[eqn:#1]{(\ref*{eqn:#1})}}
\newcommand{\Figure}[1]{\hyperref[fig:#1]{Figure~\ref*{fig:#1}}}
\newcommand{\dbb}[1]{[\![#1]\!]}
\newcommand{\br}[1]{\left(#1\right)}
\def\namedlabel#1#2{\begingroup
 #2%
 \def\@currentlabel{#2}%
 \phantomsection\label{#1}\endgroup
}
\DeclareMathOperator{\GL}{GL}
\DeclareMathOperator{\SL}{SL}
\DeclareMathOperator{\Gr}{Gr}
\DeclareMathOperator{\Fl}{Fl}
\DeclareMathOperator{\QK}{QK}
\DeclareMathOperator{\K}{K}
\DeclareMathOperator{\HH}{H}
\DeclareMathOperator{\pt}{pt}
\def\poly{{\mathrm{poly}}}
\def\loc{{\mathrm{loc}}}
\DeclareMathOperator{\ch}{\ch}
\DeclarePairedDelimiter{\angles}{\langle}{\rangle}
\newcommand{\bfr}{{\mathbf r}}
\newcommand{\bP}{{\mathbb P}}
\newcommand{\C}{{\mathbb C}}
\newcommand{\Q}{{\mathbb Q}}
\newcommand{\Z}{{\mathbb Z}}
\newcommand{\cO}{{\mathcal O}}
\newcommand{\cQ}{{\mathcal Q}}
\newcommand{\cS}{{\mathcal S}}
\newcommand{\gw}[2]{\angles{ #1 }^{\mbox{}}_{#2}}
\newcommand{\euler}[1]{\chi_{_{#1}}}
\newcommand{\id}{\text{id}}
\DeclareMathOperator{\ev}{ev}
\newcommand{\wb}{\overline}
\newcommand{\ignore}[1]{}
\newcommand{\Mb}{\wb{\mathcal M}}
\begin{document}

\allowdisplaybreaks

\newcommand{\arXivNumber}{2504.07412}

\renewcommand{\PaperNumber}{098}

\FirstPageHeading

\ShortArticleName{Toda-Type Presentations for the Quantum K Theory of Partial Flag Varieties}

\ArticleName{Toda-Type Presentations for the Quantum K Theory\\ of Partial Flag Varieties}

\Author{Kamyar AMINI~$^{\rm a}$, Irit HUQ-KURUVILLA~$^{\rm b}$, Leonardo C.~MIHALCEA~$^{\rm a}$, Daniel ORR~$^{\rm a}$\newline and Weihong XU~$^{\rm c}$}

\AuthorNameForHeading{K.~Amini, I.~Huq-Kuruvilla, L.C.~Mihalcea, D.~Orr and W.~Xu}

\Address{$^{\rm a)}$~Department of Mathematics, Virginia Tech, Blacksburg, VA 24061, USA}
\EmailD{\mail{kamini@vt.edu}, \mail{lmihalce@vt.edu}, \mail{dorr@vt.edu}}

\Address{$^{\rm b)}$~Institute of Mathematics, Academia Sinica, 6F, Astronomy-Mathematics Building,\\
\hphantom{$^{\rm b)}$}~No. 1, Sec. 4, Roosevelt Road, Da-an, Taipei 106319, Taiwan}
\EmailD{\mail{irithk@as.edu.tw}}

\Address{$^{\rm c)}$~Division of Physics, Mathematics, and Astronomy, Caltech, 1200 E. California Blvd.,\\
\hphantom{$^{\rm c)}$}~Pasadena, CA 91125, USA}
\EmailD{\mail{weihong@caltech.edu}}

\ArticleDates{Received April 16, 2025, in final form November 10, 2025; Published online November 20, 2025}

\Abstract{We prove a determinantal, Toda-type, presentation for the equivariant K theory of a partial flag variety ${\rm Fl}(r_1, \dots, r_k;n)$. The proof relies on pushing forward the Toda presentation obtained by Maeno, Naito and Sagaki for the complete flag variety ${\rm Fl}(n)$, via Kato's ${\rm K}_T({\rm pt})$-algebra homomorphism from the quantum K ring of ${\rm Fl}(n)$ to that of ${\rm Fl}(r_1, \dots, r_k;n)$. Starting instead from the Whitney presentation for ${\rm Fl}(n)$, we show that the same pushforward technique gives a recursive formula for polynomial representatives of quantum K Schubert classes in any partial flag variety which do not depend on quantum parameters. In an appendix, we include another proof of the Toda presentation for the equivariant quantum K ring of ${\rm Fl}(n)$, following Anderson, Chen, and Tseng, which is based on the fact that the ${\rm K}$-theoretic $J$-function is an eigenfunction of the finite difference Toda Hamiltonians.}

\Keywords{quantum K theory; partial flag varieties; Toda lattice}

\Classification{14M15; 14N35; 37K10; 05E05}

\section{Introduction}

Let $\Fl(n)$ denote the variety of complete flags in $\C^n$, and let
$\Fl(\bfr,n)=\Fl(r_1, \dots, r_k; n)$ be the variety of partial flags. These are homogeneous under the group
$\SL_n(\C)$, and the restriction of this action to the maximal torus
$T \subset \SL_n(\C)$ has finitely many fixed points, indexed
by a~quotient of the symmetric group $S_n$.
Denote
by $\QK_T(\Fl(\bfr,n))$ the (equivariant, small) quantum K
ring associated to these varieties. This is an algebra over
$\K_T(\pt)[\![Q_1, \dots, Q_k]\!]$, and it has
a~$\K_T(\pt)[\![Q_1, \dots, Q_k]\!]$-basis given by Schubert classes
$\cO^w$ indexed by the torus fixed points. The quantum K multiplication
was defined by Givental and Lee~\cite{givental:onwdvv,lee:QK} in
terms of $3$-point, genus~$0$, $\K$-theoretic Gromov--Witten (KGW) invariants.
Denote by
\[ 0=\cS_0 \subset \cS_1 \subset \dots \subset \cS_k \subset \cS_{k+1}=\C^n \]
the sequence of tautological bundles in $\Fl(r_1, \dots, r_k; n)$; thus
$\mathrm{rank}(\cS_i)=r_i$ for $0\leq i\leq k+1$ with $r_0=0$ and $r_{k+1}=n$.

While the computational foundations of the quantum K rings
of (cominuscule) Grassmannians have been studied for some
time now (see, e.g.,~\cite{BCMP:qkpos,BCMP:qkchev,buch.m:qk,chaput.perrin:rationality,Gorbounov:2014,sinha2024quantum}),
it is only in the last few years that advances have been made in our understanding of quantum K rings for other
flag varieties; see, e.g.,~\cite{anderson2022finite,gu2023quantum,huq2024quantum,kouno.et.al:parabolic,kouno.naito:C,lenart.naito.sagaki:semiinfinite,maeno.naito.sagaki:QKideal,maeno2023presentation}. Many of these advances rely on the groundbreaking
works by
Kato~\cite{kato2019quantum, kato:loop},
who proved the
K-theoretic
version of Peterson's `quantum=affine' statement~\mbox{\cite{IIM:peterson, li.lam.mihalcea.shimozono:qkaffine}}, relating
the quantum K ring of a full flag variety (for an arbitrary complex group $G$) to the
K-homology of the corresponding affine Grassmannian; see also~\cite{chow2022quantum}.
In particular, thanks to results in~\cite{maeno.naito.sagaki:QKideal,maeno2023presentation}
(proving conjectures in~\cite{lenart.maeno:quantum}),
there are now presentations of the quantum K rings by generators and relations
for \smash{$\QK_T(\Fl(n))$}, and we have polynomial representatives (the quantum double Grothendieck polynomials) for Schubert classes.
The generating set of the presentation in~\cite{maeno.naito.sagaki:QKideal}
is in terms of the quantum quotients $\det \cS_{i} / \det \cS_{i-1}$.
We rewrite this presentation in determinantal form
in Theorem~\ref{thm:toda} below. This makes it easier to identify
it with the {\em Toda presentation},
which is obtained by taking symbols of the finite difference
Toda operators studied by Givental and Lee~\cite{givental_lee},
and also by Anderson, Chen and Tseng in~\cite{act:2017}, see also
\cite{koroteev} and Appendix~\ref{sec:toda} below.

Our main result is to generalize the Toda presentation from $\QK_T(\Fl(n))$
to one for the ring
$\QK_T(\Fl(\bfr,n))$ associated to partial flag varieties. To state it, let
\[
Y^{(j)}=\bigl(Y^{(j)}_1,\dots,{ Y^{(j)}_{r_{j+1}-r_j}}\bigr), \qquad 0\leq j\leq k
\]
be formal variables and $e_\ell$ be the $\ell$-th elementary symmetric polynomial. Let $T_1,\dots, T_n \in \K_T(\pt)$ be given by
the decomposition of $\C^n$ into one dimensional $T$-modules, that is, $\wedge^\ell(\C^n)=e_\ell(T_1,\dots,T_n)$. To distinguish from multiplication in $K_T(\Fl(\bfr,n))$, we denote the multiplication in $\QK_T(\Fl(\bfr,n))$ by $\star$.

\begin{Theorem}[Theorem~\ref{thm:main}]
 \label{thm:main-intro}
 The ring $\QK_T(\Fl(\bfr,n))$ is isomorphic to $R\dbb{Q}/J_Q$, where
 \[
 R=\K_T(\pt)\bigl[e_1\bigl(Y^{(j)}\bigr),\dots,e_{r_{j+1}-r_j}\bigl(Y^{(j)}\bigr),\, 0\leq j\leq k\bigr],
 \]
 and $J_Q\subset R[\![Q]\!]=R[\![Q_1,\dots,Q_k]\!]$
is the ideal generated by the coefficients of $y$ in
\begin{align*}
 \prod_{\ell=1}^{n}(1+yT_\ell)-
 &\begin{vmatrix}
 A_0 & B_1 & & & \\
 1 & A_1 & B_2 & & \\ & \ddots & \ddots & \ddots & \\ & & 1 & A_{k-1} & B_{k}\\ & & & 1 & A_{k}
 \end{vmatrix}^\star,
 \end{align*}
 where \[A_j=\prod_{\ell=1}^{r_{j+1}-r_j}\bigl(1+y Y^{(j)}_\ell\bigr)+B_j,\qquad B_j=y^{{r_{j+1}-r_j}}\frac{Q_j}{1-Q_j}\prod_{\ell=1}^{r_{j+1}-r_j}Y^{(j)}_\ell,\]
 with the convention that $Q_0=0$.

 More precisely,
there exists a $\K_T(\pt)[\![Q]\!]$-algebra isomorphism
\[\Psi\colon \ R[\![Q]\!]/J_Q \to \QK_T(\Fl(r_1,\dots,r_k)), \qquad e_\ell\bigl(Y^{(j)}\bigr) \mapsto \wedge^\ell (\cS_{j+1}/\cS_j) \]
for $j=0,\dots,k$ and $\ell=1,\dots,r_{j+1}-r_{j}$.
\end{Theorem}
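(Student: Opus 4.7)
The plan is to deduce the presentation of $\QK_T(\Fl(\bfr,n))$ by applying Kato's surjective $\K_T(\pt)\dbb{Q}$-algebra homomorphism
\[
\kappa\colon \QK_T(\Fl(n))\twoheadrightarrow \QK_T(\Fl(\bfr,n))
\]
to the Toda presentation of $\QK_T(\Fl(n))$ recorded in Theorem~\ref{thm:toda}, and identifying the resulting ideal of relations with the claimed block-determinantal ideal~$J_Q$.

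First I would record the action of $\kappa$ on the two relevant families of data. Writing $Y_1,\dots,Y_n$ for the Toda generators of $\QK_T(\Fl(n))$ associated to the line bundles $\cS_i/\cS_{i-1}$ in $\Fl(n)$, the block-wise elementary symmetric combinations $e_\ell\bigl(Y_{r_j+1},\dots,Y_{r_{j+1}}\bigr)$ are mapped by $\kappa$ to $\wedge^\ell(\cS_{j+1}/\cS_j)$ in $\QK_T(\Fl(\bfr,n))$, while on quantum parameters $\kappa$ sends $Q^{\Fl(n)}_{r_j}\mapsto Q_j$ for $j=1,\dots,k$ and the remaining $Q^{\Fl(n)}_i$ to $0$. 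Both specializations follow from Kato's K-theoretic ``quantum $=$ affine'' theorem (cf.\ \cite{kato2019quantum,kato:loop}) together with the description of the projection $\Fl(n)\to \Fl(\bfr,n)$ on Schubert bases.

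The main computational step is to evaluate the image of the $n\times n$ tridiagonal Toda determinant under these specializations and rewrite it as the block-tridiagonal $(k+1)\times(k+1)$ determinant in the statement. After killing the non-crossing $Q^{\Fl(n)}_i$, the only surviving off-diagonal couplings $B_{r_j}$ occur at the block boundaries $i=r_j$, so the determinant decomposes along the intervals $(r_j,r_{j+1}]$. Within each block of size $r_{j+1}-r_j$, Laplace expansion—equivalently, successive row reductions eliminating the intra-block subdiagonal ones—produces the entry $\prod_\ell\bigl(1+yY^{(j)}_\ell\bigr)$ on the new diagonal; the surviving boundary coupling $B_{r_j}=yQ_jY_{r_j+1}/(1-Q_j)$ picks up a factor $y^{r_{j+1}-r_j-1}\prod_\ell Y^{(j)}_\ell$ from the block expansion and recombines into the stated $B_j$. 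Rewriting the resulting relation in the block-symmetric variables $e_\ell\bigl(Y^{(j)}\bigr)$ identifies the image of the Toda ideal with $J_Q$, yielding a $\K_T(\pt)\dbb{Q}$-algebra surjection $\overline{\kappa}\colon R\dbb{Q}/J_Q\twoheadrightarrow \QK_T(\Fl(\bfr,n))$ with $e_\ell\bigl(Y^{(j)}\bigr)\mapsto \wedge^\ell(\cS_{j+1}/\cS_j)$.

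To promote $\overline{\kappa}$ to an isomorphism, I would specialize at $Q=0$, where the quotient $R/J_Q|_{Q=0}$ matches the classical equivariant K-theoretic Borel-type presentation of $\Fl(\bfr,n)$, and then apply a Nakayama/$Q$-adic completeness argument, using that $\QK_T(\Fl(\bfr,n))$ is free over $\K_T(\pt)\dbb{Q}$ of the expected rank (the number of Schubert classes): lifting a classical basis gives a surjection $\K_T(\pt)\dbb{Q}^N\twoheadrightarrow R\dbb{Q}/J_Q$ whose composition with $\overline{\kappa}$ is a surjection between free modules of rank $N$, hence an isomorphism, forcing $\overline{\kappa}$ itself to be an isomorphism. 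I expect the main obstacle to lie in the determinantal reduction: carefully tracking the $Q_j/(1-Q_j)$ geometric-series factors through the intra-block row operations, and verifying that the direct pushforward of the Toda relation and the symmetric theorem form cut out the same ideal in $R\dbb{Q}$, since as raw polynomials in the $Y_i$'s they can differ by expressions that are themselves consequences of the remaining relations.
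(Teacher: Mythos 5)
There is a genuine gap, and it sits at the very first step: Kato's homomorphism does \emph{not} send the non-marked Novikov variables of $\Fl(n)$ to $0$; it sends them to $1$ (see Theorem~\ref{thm:kato-morphism} and \eqref{E:kato-push}), and only the variables $Q_{r_j}$ survive as the $Q_j$ of $\Fl(\bfr,n)$. This changes the entire character of the computation. Under your specialization $Q_i\mapsto 0$ the intra-block couplings $yY^{(i)}Q_i/(1-Q_i)$ would simply vanish and the determinant would decompose for free; under the correct specialization $Q_i\mapsto 1$ those entries have \emph{poles}, so the Toda relation for $\Fl(n)$ cannot be pushed forward term by term at all. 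The paper's proof is organized precisely around this obstruction: one works in the localization at $1-Q_j$ for the surviving parameters only, expands the determinant along columns to isolate the offending $2\times 2$ minors, and invokes the Maeno--Naito--Sagaki identity $\det\cS_i\star\det(\cS_j/\cS_i)=(1-Q_i)\det\cS_j$ (Proposition~\ref{prop:MNS}) to show that combinations such as $B_{i-1}\star B_i$ have removable singularities at $Q_i=1$ and push forward to the claimed $B'_{i-1}$. None of this machinery appears in your argument, and without it the "determinantal reduction" step fails.

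A second, related problem is your identification of the block-diagonal entries. Inside $\QK_T(\Fl(n))$ the $\star$-product $\sideset{}{^\star}\prod_\ell\lambda_y(\cS_\ell/\cS_{\ell-1})$ over a block is \emph{not} equal to $\lambda_y(\cS_{r_{j+1}}/\cS_{r_j})$; the two differ by quantum corrections carrying further $1/(1-Q_i)$ factors (cf.\ Proposition~\ref{prop:F}). The identification with $\lambda_y(\cS_{j+1}/\cS_j)$ only happens \emph{after} applying the pushforward $\pi_*$, via Kapranov's relative Borel--Weil--Bott (Proposition~\ref{prop:relativeBWB}) and the projection formula -- which is also why the paper collapses one subspace at a time by decreasing induction on $k$, rather than in a single step. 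Your final paragraph (specializing at $Q=0$ to the Borel-type presentation and concluding by a Nakayama-type argument from~\cite{GMSXZZ:Nakayama}) does match the paper's completion of the proof, but the core of the argument as you have written it would not go through.
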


Our proof is by decreasing induction on $k$. The initial case, $k=n-1$, is the main result of~\cite{maeno.naito.sagaki:QKideal}, rewritten in determinantal form in Theorem~\ref{thm:toda} below. For the induction step, we use a~result of Kato~\cite{kato2019quantum} which states that there is a
$\K_T(\pt)$-{\em algebra} homomorphism
\begin{gather}
\QK_T(\Fl(n)) \to \QK_T(\Fl(r_1, \dots , r_k;n)); \nonumber\\
\qquad{}\cO^w \mapsto \pi_*(\cO^w) ,
\qquad Q_r \mapsto \begin{cases} 1, & r \notin \{ r_1, \dots, r_k \}, \\
Q_i, & r=r_i ,\end{cases}\label{E:kato-push}
\end{gather}
which extends the usual projection map $\pi_*\colon \K_T(\Fl(n)) \to \K_T(\Fl(\bfr,n))$.
Note that the classical~$\pi_*$ is {\em not} a ring map. (Kato's result is for general complex, simple groups $G$.) We use this to show that the ideal $J_Q$ is contained in the ideal of relations.
For the specialization ${Q_i \mapsto 1}$ to be well defined, one needs to work with
polynomials in $Q_1, \dots, Q_{n-1}$; see Section~\ref{sec:prelim}.
Pushing forward the original Toda relations is not possible, due to poles at $Q_i=1$.
We had to rewrite these relations, and additionally
use an extra identity due to Maeno, Naito, and Sagaki
(cf.~Proposition~\ref{prop:MNS} below), in order for the push forward to be performed.
Finally, it follows from~\cite{GMSXZZ:Nakayama} that the ideal $J_Q$ coincides with the ideal of relations.

The same pushforward technique may be applied to the {\em Whitney presentation},
conjectured in~\cite{gu2023quantum,GMSXZZ:QKW}, and for which a proof was recently announced in~\cite{huq2024quantum}; see also~\cite{Gu:2020zpg,gu2022quantum} for the Grassmannian case.
This is a presentation for $\QK_T(\Fl(\bfr,n))$ with generators $\wedge^k(\cS_i)$ and $\wedge^\ell(
 \cS_i/\cS_{i-1}
)$. We prove in Proposition~\ref{prop:whit-toda} that if one eliminates the variables corresponding to
classes $\wedge^k(\cS_i)$ in the Whitney presentation,
then one recovers the Toda presentation.

Our methods also provide a different proof of the Whitney
presentation for $\QK_T(\Fl(\bfr,n))$, once the Whitney
presentation for $\QK_T(\Fl(n))$ (a special case of results from~\cite{huq2024quantum})
is assumed; see Remark~\ref{rmk:QKW}. The details of this proof are omitted, as
they follow closely the proof of Theorem~\ref{thm:main-intro}.

In a further application of our technique, using the aforementioned
Whitney presentation, we rewrite the formula
from~\cite{maeno2023presentation}
of the quantum double Grothendieck
polynomial of the class of a~point in $\Fl(n)$~\cite{maeno2023presentation} in terms
of the classes $\lambda_y(\cS_i)$.
Surprisingly, the resulting class is {\em independent}
of the quantum parameters $Q_i$.
Pushing forward this class results in a polynomial representative
for the class of the (Schubert) point in any $\QK_T(\Fl(\bfr,n))$
which is independent of $Q_i$. The outcome is the following.

\begin{Theorem}[Theorem~\ref{thm:w0}]
Let $\cO^{w_0} \in \QK_T(\Fl(r_1,\dots, r_k;n))$ be the class of the Schubert point.
Then the following holds:
 \begin{gather*}
 \cO^{w_0}=\sideset{}{^\star}\prod_{i=1}^{k}\sideset{}{^\star}\prod_{j=r_i}^{r_{i+1}-1}\lambda_{-1}({\rm e}^{-\epsilon_{n-j}}\cS_i) ,
 \end{gather*}
where ${\rm e}^{\epsilon_i} \in \K_T(\pt)$ denotes the $($class of the$)$ $1$-dimensional $T$-representation
with weight $\epsilon_i$.
\end{Theorem}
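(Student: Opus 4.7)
The plan is to lift the formula to the complete flag variety $\Fl(n)$ and then push it forward via Kato's homomorphism $\kappa$ from~\eqref{E:kato-push}. Writing $\wt\cS_i$ for the $i$-th tautological subbundle of $\Fl(n)$, my first task would be to establish the $\Fl(n)$ version of the formula,
\[
\cO^{w_0} = \sideset{}{^\star}\prod_{i=1}^{n-1}\lambda_{-1}\bigl({\rm e}^{-\epsilon_{n-i}}\wt\cS_i\bigr)
\qquad\text{in } \QK_T(\Fl(n)).
\]
I would take the explicit formula from~\cite{maeno2023presentation} for the quantum double Grothendieck polynomial representing $\cO^{w_0}$ (which is naturally expressed in the line-bundle variables $\wt\cS_{i+1}/\wt\cS_i$) and reorganize it in the $\lambda_y(\wt\cS_i)$-variables provided by the Whitney presentation of~\cite{huq2024quantum}. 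The essential feature, as the introduction emphasizes, is that after this regrouping the quantum parameters $Q_i$ all cancel, so the representative lies in the classical subring $\K_T(\Fl(n))\subset\QK_T(\Fl(n))$.

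Next I would apply $\kappa$ to both sides. Since $\kappa$ is a $\K_T(\pt)$-algebra homomorphism with $\kappa(\cO^{w_0})=\pi_*\cO^{w_0}=\cO^{w_0}$ (the pushforward of the skyscraper sheaf at the Schubert point is the skyscraper sheaf at its image), and since on $Q$-free classes $\kappa$ coincides with the classical pushforward $\pi_*$, this yields
\[
\cO^{w_0} = \sideset{}{^\star}\prod_{i=1}^{n-1}\pi_*\bigl(\lambda_{-1}({\rm e}^{-\epsilon_{n-i}}\wt\cS_i)\bigr) \qquad\text{in } \QK_T(\Fl(\bfr,n)).
\]
Let $j(i)$ be the unique index with $r_{j(i)}\le i<r_{j(i)+1}$ (using $r_0=0$, $r_{k+1}=n$). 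The short exact sequence $0\to \pi^*\cS_{j(i)}\to \wt\cS_i \to M_i \to 0$, with $M_i$ a rank $i-r_{j(i)}$ subbundle of $\pi^*(\cS_{j(i)+1}/\cS_{j(i)})$ living in the fiber of $\pi$, combined with multiplicativity of $\lambda_{-1}$ and the projection formula, then gives
\[
\pi_*\bigl(\lambda_{-1}({\rm e}^{-\epsilon_{n-i}}\wt\cS_i)\bigr) = \lambda_{-1}\bigl({\rm e}^{-\epsilon_{n-i}}\cS_{j(i)}\bigr)\cdot \pi_*\bigl(\lambda_{-1}({\rm e}^{-\epsilon_{n-i}}M_i)\bigr).
\]

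For each block $j\in\{0,\dots,k\}$, the product of fiber-only factors $\prod_{i=r_j+1}^{r_{j+1}-1}\lambda_{-1}({\rm e}^{-\epsilon_{n-i}}M_i)$ is, up to reindexing, precisely the Step~1 formula applied fiber-wise to $\cS_{j+1}/\cS_j$: the weights $\epsilon_{n-r_j-1},\dots,\epsilon_{n-r_{j+1}+1}$ appearing there are those of $\cS_{j+1}/\cS_j$ at the Schubert point of $\Fl(\bfr,n)$, shifted in the pattern dictated by the $\Fl(n)$ formula. This product therefore represents the Schubert point class of the complete flag variety $\Fl(\cS_{j+1}/\cS_j)$ in the fiber, and its fiber pushforward is $1$. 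Collecting the surviving pullback pieces $\lambda_{-1}({\rm e}^{-\epsilon_{n-i}}\cS_{j(i)})$, dropping the $j(i)=0$ terms (which contribute $\lambda_{-1}(0)=1$), and regrouping the outer product by the value of $j(i)$ so that $i$ runs over $\{r_{j(i)},\dots,r_{j(i)+1}-1\}$, yields the stated formula.

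The main obstacle is Step~1: verifying that the formula of \cite{maeno2023presentation} genuinely reorganizes into $\prod_{i=1}^{n-1}\lambda_{-1}({\rm e}^{-\epsilon_{n-i}}\wt\cS_i)$ with all quantum parameters cancelling. This is the novel content and rests on using the Whitney relations in a careful way. The subsequent pushforward step is a relatively clean consequence of the projection formula, together with the bookkeeping required to match the torus weights in the fiber-only product with those of $\cS_{j+1}/\cS_j$ at the Schubert point of $\Fl(\bfr,n)$.
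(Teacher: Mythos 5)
Your proposal is correct in substance and follows the same overall strategy as the paper: take the $\Fl(n)$ case of the formula from Maeno--Naito--Sagaki (the paper's Corollary~\ref{cor:pt-class}, obtained via Proposition~\ref{prop:F}), then transport it through Kato's ring homomorphism, using that the homomorphism distributes over $\star$ and acts as the classical $\pi_*$ on each $Q$-free factor. The organization differs: the paper inducts by removing one step $r_i$ of the flag at a time and computes $\pi_*\lambda_{-1}({\rm e}^{-\epsilon_{n-j}}\cS_i)=\lambda_{-1}({\rm e}^{-\epsilon_{n-j}}\cS_{i-1})$ by invoking the \emph{quantum} Whitney relation~\eqref{eqn:lambda_y_rel} together with the vanishing $\pi_*\bigl(\wedge^\ell(\cS_i/\cS_{i-1})\bigr)=0$ for $\ell>0$ from Proposition~\ref{prop:relativeBWB}; you instead push forward from $\Fl(n)$ in one shot, using the classical filtration $0\to\pi^*\cS_{j(i)}\to\wt\cS_i\to M_i\to 0$. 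Your one-shot version is legitimate and arguably cleaner, since each factor is a classical class and the classical projection formula suffices.

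One point needs repair. After writing each factor as $\lambda_{-1}\bigl({\rm e}^{-\epsilon_{n-i}}\cS_{j(i)}\bigr)\cdot\pi_*\bigl(\lambda_{-1}({\rm e}^{-\epsilon_{n-i}}M_i)\bigr)$, what you must show is that \emph{each individual} factor $\pi_*\bigl(\lambda_{-1}({\rm e}^{-\epsilon_{n-i}}M_i)\bigr)$ equals $1$; your justification instead addresses the pushforward of the \emph{product} of the fiber-only factors over a block (``represents the Schubert point of the fiber, so pushes forward to $1$''), which is not the quantity appearing in your factor-by-factor formula. The correct and simpler justification is that $M_i$ is pulled back from the Grassmann bundle $\Fl(\bfr\cup\{i\};n)\to\Fl(\bfr;n)$, where it is the tautological subbundle, so $\pi_*\bigl(\wedge^\ell M_i\bigr)=0$ for all $\ell>0$ by Proposition~\ref{prop:relativeBWB}(1); this also makes the weight-matching discussion unnecessary, since twisting by any character does not affect the vanishing. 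With that substitution your argument closes, and the remaining gap you flag (deriving the $\Fl(n)$ identity with all $Q_i$ cancelling) is exactly what the paper supplies in Proposition~\ref{prop:F} and Corollary~\ref{cor:pt-class}.
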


In the usual (equivariant) K theory of $\Fl(n)$ this follows from Fulton's results in~\cite{fulton:flags}
showing that the Schubert point $X^{w_0}$ is the zero locus of a section of a vector bundle;
see also~\cite[Theorem~3]{fulton.lascoux}.
Using the left divided difference
operators in $\QK_T(\Fl(\bfr,n))$ defined in~\cite{Mihalcea2022Left},
this results in a~recursive formula for any Schubert class, giving
polynomial representatives in terms of exterior
powers $\wedge^i \cS_j$ which do not depend on quantum parameters.
See Theorem~\ref{thm:double-Grothendieck}.
Precursors of this `quantum=classical' phenomenon for polynomial representatives
of quantum Schubert classes have been observed for (isotropic) Grassmannians
\cite{bertram:quantum,BCFF,Gorbounov:2014,IMN:factorial,mihalcea:giambelli}, but to our knowledge this is new for
(partial) flag varieties. Recently, we learned that T.~Kouno found a~similar phenomenon
in the quantum K ring of the symplectic flag varieties $\mathrm{Sp}_{2n}/B$.

In Appendix~\ref{sec:toda}, we follow Anderson, Chen, and Tseng's
treatment in the unpublished
note~\cite{act:2017} to give another proof
of the Toda presentation for $\QK_T(\Fl(n))$,
independent of the one from~\cite{maeno.naito.sagaki:QKideal}.
The proof combines results of
Givental and Lee~\cite{givental_lee}, which
states that the K-theoretic J-function of $\Fl(n)$ is an eigenfunction of
the first (finite difference) Toda Hamiltonian, with results of Iritani, Milanov and Tonita
\cite{Iritani:2013qka}, which relates this fact to relations in the quantum
K theory ring. We do not claim any originality in this argument,
but we found it valuable to include it here, as it puts together results
from the followup papers~\cite{anderson2022finite} and~\cite{kato:loop}; see especially
Proposition~\ref{thm:ACTI-Kato}.

\subsection{A logical roadmap} There are several recent results in the literature which inform the Toda and Whitney
presentations we prove in this paper.
Since we do not attempt to give self-contained
proofs, we provide next a roadmap of the logical
implications we rely on, which a reader may find useful.

Our proofs of both the Toda presentation, and the Whitney presentation,
for $\QK_T(\Fl(\bfr,n))$, from Theorem~\ref{thm:main} (resp.\ \eqref{E:QKWhitney}, see Remark~\ref{rmk:QKW})
rely on the following: the Toda (resp.\ Whitney) presentation for $\QK_T(\Fl(n))$;
Kato's push forward homomorphism from~\eqref{E:kato-push}; and the key
technical result from Proposition~\ref{prop:MNS}, proved in~\cite{maeno.naito.sagaki:QKideal},
which in turn relies on Kato's work~\cite{kato:loop}.

There are two proofs of the Toda presentation for $\QK_T(\Fl(n))$, one in
\cite{maeno.naito.sagaki:QKideal}, relying on~\cite{kato:loop}, and another
which may be deduced from~\cite{act:2017}, relying on results from
\cite{givental_lee} and~\cite{Iritani:2013qka};~cf.~Appendix~\ref{sec:toda}.

There are also two proofs of the Whitney presentation for $\QK_T(\Fl(n))$.
One was recently announced by Huq-Kuruvilla
\cite{huq2024quantum} (for all rings $\QK_T(\Fl(\bfr,n))$), and
it uses the technique of abelian-nonabelian correspondence, independent
of Kato's results. Another proof of the Whitney presentation for
 $\QK_T(\Fl(n))$ is given in~\cite[Section~6]{GMSXZZ:QKW}.
It relies on the recent proof of the quantum~K divisor axiom~\cite{LNSX:QKdiv},
and ultimately on Kato's results.

\section{Preliminaries}
\subsection{Equivariant K theory of Grassman bundles}
Let $T$ be a linear algebraic group. For any projective $T$-variety $Z$,
let $\K_T(Z)$ be the equivariant $\K$-theory
ring, defined as the Grothendieck ring of $T$-equivariant algebraic vector
bundles. This ring is an algebra over $\K_T(\pt)$, the representation ring of
$T$. Let $\euler{Z}\colon \K_T(Z) \to \K_T(\pt)$ be the pushforward map along the
structure morphism.

For $E \to Z$ a $T$-equivariant
vector bundle of rank $\operatorname{rk}E$, we denote by
\[ \lambda_y(E) := 1 + y [E] + \dots + y^{\operatorname{rk}E} \bigl[\wedge^{\operatorname{rk}E} E\bigr] \in \K_T(Z)[y] \]
the Hirzerbruch $\lambda_y$ class of $E$. This class is multiplicative for short exact sequences. {In an abuse of notation, we often write $E$ for the class $[E]$ in $\K_T(Z)$.} Note that for a rank $e$ equivariant vector bundle $E$, and a character ${\rm e}^\chi \in \K_T(\pt)$,
\[ \lambda_y( {\rm e}^\chi \otimes E) = \lambda_{y {\rm e}^{\chi}}(E) = \sum_{i=0}^{e} y^i {\rm e}^{i \chi} \otimes \wedge^i E . \]
As is customary, we will often remove the $\otimes$ symbol from the notation.

Denote by
$\pi\colon \mathbb{G}(r,E) \to Z$ the Grassmann bundle over $Z$.
It is equipped with a tautological sequence
$0 \to \underline{\cS} \to \pi^* E \to \underline{\cQ} \to 0$ over
$\mathbb{G}(r,E)$. The following result follows
from~\cite[Proposition~2.2]{kapranov:Gr}, see also~\cite[Proposition~3.2 and Corollary~3.3]{gu2022quantum}.
(Kapranov proved this when $Z=\pt$; the relative version follows immediately
using that $\pi$ is a $T$-equivariant locally trivial fibration.) We only state the special cases that will be used in this paper. See the above references for the full generality.

\begin{Proposition}[Kapranov]\label{prop:relativeBWB}
 There are the following isomorphisms of $T$-equivariant vector bundles:
 \begin{enumerate}\itemsep=0pt
 \item[$(1)$] for all $i \ge 0$, $\ell>0$ the higher direct images, $R^i \pi_* \bigl(\wedge^\ell\underline{\cS}\bigr) = 0$;
 \item[$(2)$] for all $\ell\geq 0$, \[ R^i \pi_*\bigl(\wedge^\ell\underline{\cQ}\bigr) = \begin{cases} \wedge^\ell E, & i=0, \\ 0, & i>0 . \end{cases}\]
 \end{enumerate}
 \end{Proposition}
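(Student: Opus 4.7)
The plan is to reduce to the absolute case $Z=\pt$ treated in \cite[Proposition~2.2]{kapranov:Gr}. The morphism $\pi\colon \mathbb{G}(r,E)\to Z$ is a $T$-equivariant Zariski-locally trivial fibration with fiber the absolute Grassmannian $\Gr(r,\operatorname{rk}E)$, and the bundles $\wedge^\ell\underline{\cS}$ and $\wedge^\ell\underline{\cQ}$ restrict on each fiber to $\wedge^\ell\mathcal{S}$ and $\wedge^\ell\mathcal{Q}$, respectively. Since the absolute statement below forces these fiberwise cohomology groups to have locally constant dimension, cohomology and base change identifies $R^i\pi_*(\wedge^\ell\underline{\cS})$ and $R^i\pi_*(\wedge^\ell\underline{\cQ})$ with $T$-equivariant vector bundles on $Z$ whose fibers are the corresponding absolute cohomology groups.

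For the absolute case, the plan is to invoke Borel--Weil--Bott on $\Gr(r,V)=\GL(V)/P$. The bundle $\wedge^\ell\mathcal{S}$, for $0<\ell\le r$, is associated to a weight of the Levi whose $\rho$-shift is singular, so its total sheaf cohomology vanishes, yielding part~(1). For $\wedge^\ell\mathcal{Q}$, Borel--Weil--Bott concentrates the cohomology in degree zero, where it recovers $\wedge^\ell V$; this gives part~(2) fiberwise.

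The one point requiring care is to promote the degree-zero identification to a $T$-equivariant isomorphism of bundles on $Z$. To this end I would use the $T$-equivariant tautological surjection $\pi^*E\twoheadrightarrow\underline{\cQ}$: applying $\wedge^\ell(\cdot)$, pushing forward, and combining with the projection formula and $\pi_*\mathcal{O}_{\mathbb{G}(r,E)}=\mathcal{O}_Z$ produces a $T$-equivariant map $\wedge^\ell E\to R^0\pi_*(\wedge^\ell\underline{\cQ})$, which must be an isomorphism by the fiberwise statement. Beyond this bookkeeping, I do not expect any real obstacle, since the hard content of the proposition is Kapranov's application of Borel--Weil--Bott, which we take as input.
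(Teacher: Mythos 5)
Your proposal is correct and follows essentially the same route as the paper, which simply cites Kapranov's absolute case ($Z=\pt$, proved via Borel--Weil--Bott) and notes that the relative statement follows immediately from $\pi$ being a $T$-equivariant locally trivial fibration. Your additional details --- cohomology and base change to globalize the fiberwise computation, and the map $\wedge^\ell E\to R^0\pi_*\bigl(\wedge^\ell\underline{\cQ}\bigr)$ induced by the tautological surjection to pin down the isomorphism equivariantly --- are exactly the bookkeeping the paper leaves implicit.
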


\subsection{(Equivariant) quantum K theory of flag varieties}\label{sec:prelim}
Let $\mathbf{r}= (r_1, \dots, r_k)$. We consider \[X=\Fl(\bfr,n),\] which parametrizes flags of vector spaces
$F_{1} \subset F_{2} \subset \dots \subset F_{k} \subset \C^n$
with $\dim F_{i} = r_i$ for~${1 \le i \le k}$.

Let $M_{d,n} := \Mb_{0,n}(X,d)$ be the moduli space of genus zero degree
$d$ stable maps to $X$ with~$n$ marked points. Given classes
$a_1, \dots, a_n \in \K_T(X)$,
define the $\K$-theoretic Gromov--Witten invariants by
\[ \langle a_1, \dots, a_n \rangle_{d} =
\euler{M_{d,n}}\Biggl(\prod_{i = 1}^n\ev_i^*(a_i)\Biggr)\in \K_T(\pt) . \]
Non-equivariant Gromov--Witten invariants are obtained by replacing $T$ with the
trivial group; these Gromov--Witten invariants are integers.

For $d=(d_1,\dots,d_k)\in\HH_2(X,\Z)\cong\Z^k$, we write $Q^d$ for \smash{$\prod_{i=1}^k Q_i^{d_i}$}. Here $Q_i$ corresponds to the Poincar{\'e} dual of the first Chern class $-c_1(\det \cS_i)$. Following~\cite{givental:onwdvv,lee:QK}, the $T$-equivariant (small) quantum
K theory ring is
\begin{equation*}
\QK_T(X) = \K_T(X) \otimes_{\K_T(\pt)} \K_T(\pt)[\![Q]\!]
\end{equation*}
as a $\K_T(\pt)[\![Q]\!]$-module.
It is equipped with a commutative, associative
product, denoted by $\star$, which is
determined by the condition
\begin{equation*}
 (\!(\sigma_1\star\sigma_2,\sigma_3)\!)=\sum_d
 Q^d \gw{\sigma_1,\sigma_2,\sigma_3}{d}\qquad\text{for all $\sigma_1,\sigma_2,\sigma_3\in \K_T(X)$},
\end{equation*}
where
\[
 (\!(\sigma_1,\sigma_2)\!)\coloneqq \sum_{d}Q^d \gw{\sigma_1,\sigma_2}{d}
\]
is the quantum $\K$-metric.

It was proved in~\cite{anderson2022finite, kato:loop} that for $\sigma_1,\sigma_2\in\K_T(X)$, the product $\sigma_1\star\sigma_2$ can always be expressed as a polynomial in $Q$ with coefficients in $\K_T(X)$. It follows that
\[
 \QK_T^\poly(X)\coloneqq\K_T(X) \otimes_{\K_T(\pt)}\K_T(\pt)[Q]
\]
is a subring of $\QK_T(X)$.

Let $Y=\Fl\bigl(r_1,\dots, \widehat{r}_i,\dots, r_k;n\bigr)$ and $\pi\colon X\to Y$ be the natural map. Let also $\widehat{\mathbf{r}} = \bigl(r_1, \dots,\allowbreak \widehat{r}_i,\dots, r_k\bigr)$. The following theorem is a specialization of results proved in~\cite{kato2019quantum}.

\begin{Theorem}[Kato]\label{thm:kato-morphism}
 There is a surjective ring homomorphism
 \[
 \Phi\colon\ \QK^\poly_T(X)\to\QK^\poly_T(Y)
 \]
 given by $\sigma\mapsto{\pi}_*\sigma$ for all $\sigma\in\K_T(X)$ and
 \[
 Q_j\mapsto\begin{cases}
 Q_j, & j\neq i,\\
 1, & j=i
 \end{cases} \qquad \text{for $1\leq i\leq k$}.\]
\end{Theorem}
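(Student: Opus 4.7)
The plan is to deduce Theorem~\ref{thm:kato-morphism} from the special case \eqref{E:kato-push}, which is already stated in the introduction for projections from the complete flag variety $\Fl(n)$. Write the given projection as the composition $\pi_Y\colon \Fl(n) \xrightarrow{\pi_X} X \xrightarrow{\pi} Y$, and note that both $\pi_X$ and $\pi_Y$ are projections of the complete flag variety onto a partial flag variety. By \eqref{E:kato-push}, both induce surjective $\K_T(\pt)$-algebra homomorphisms
\[
\Phi_X\colon \QK^\poly_T(\Fl(n)) \to \QK^\poly_T(X), \qquad \Phi_Y\colon \QK^\poly_T(\Fl(n)) \to \QK^\poly_T(Y),
\]
each sending K-theory classes to their classical pushforwards and specializing $Q_r \mapsto 1$ for the ranks $r$ not appearing in the target partial flag data, while relabeling the rest.

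I would then define $\Phi$ by the relation $\Phi \circ \Phi_X = \Phi_Y$. Surjectivity of $\Phi_X$ forces $\Phi$ to be uniquely determined, and once well-definedness is verified, $\Phi$ is automatically a surjective $\K_T(\pt)[Q_j:j\neq i]$-algebra homomorphism. The statement about K-theory classes follows from functoriality of pushforward: for $\sigma\in \K_T(X)$, pick a classical lift $\widetilde\sigma\in\K_T(\Fl(n))$ with $(\pi_X)_*\widetilde\sigma = \sigma$ (possible by surjectivity of the classical pushforward for partial flag bundle projections, Proposition~\ref{prop:relativeBWB}); then
\[
\Phi(\sigma)=\Phi(\Phi_X(\widetilde\sigma))=\Phi_Y(\widetilde\sigma)=(\pi_Y)_*\widetilde\sigma=\pi_*\sigma.
\]
The specialization of the $Q_j$ is read off by comparing the substitutions made by $\Phi_X$ and $\Phi_Y$: the only extra rank omitted on the $Y$-side is $r_i$, so its quantum parameter $Q_i$ on $X$ must go to $1$, while the remaining $Q_j$ are preserved.

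The main obstacle is well-definedness, i.e., the containment $\ker \Phi_X \subseteq \ker \Phi_Y$. For this I would invoke the full generality of Kato's work \cite{kato2019quantum,kato:loop}: his K-theoretic Peterson isomorphism identifies $\QK^\poly_T(G/P)$, for any parabolic $P$, with a localization of the $T$-equivariant K-homology of the affine Grassmannian of $G$, and the pushforward homomorphisms between different $\QK^\poly_T(G/P)$ arise from the compatibility of these localizations as $P$ varies. Applying this to $G=\SL_n$ with the parabolics associated to $\mathbf{r}$ and $\widehat{\mathbf{r}}$ gives directly a ring homomorphism $\QK^\poly_T(X)\to \QK^\poly_T(Y)$ satisfying the desired formulas, and the triangle $\Phi\circ\Phi_X=\Phi_Y$ then holds by naturality, which in particular forces $\ker\Phi_X\subseteq\ker\Phi_Y$ and confirms that the $\Phi$ constructed above is the map provided by Kato. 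Surjectivity of $\Phi$ follows from surjectivity of $\Phi_Y$ together with $\Phi_Y=\Phi\circ\Phi_X$.
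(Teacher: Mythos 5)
The paper does not actually prove Theorem~\ref{thm:kato-morphism}: it is stated as a direct specialization of Kato's results in \cite{kato2019quantum}, which apply to projections $G/P\to G/Q$ between arbitrary partial flag varieties. Your proposal, read carefully, amounts to the same citation with an extra layer on top. The factorization $\pi_Y=\pi\circ\pi_X$ through $\Fl(n)$ and the requirement $\Phi\circ\Phi_X=\Phi_Y$ correctly pin down what $\Phi$ must be if it exists --- your computation $\Phi(\sigma)=\pi_*\sigma$ via a classical lift, and the bookkeeping of the $Q$-specializations, are both fine --- but they do not establish that $\Phi$ exists.

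The genuine gap is the well-definedness step $\ker\Phi_X\subseteq\ker\Phi_Y$. This containment is not a formal consequence of \eqref{E:kato-push}: the kernel of $\Phi_X$ consists of all elements of $\QK^\poly_T(\Fl(n))$ killed by the combination of pushforward and the specialization $Q_r\mapsto 1$ for $r\notin\{r_1,\dots,r_k\}$, and there is no elementary description of it that would let you compare it with $\ker\Phi_Y$. You resolve this only by invoking Kato's results for arbitrary pairs of parabolics, which directly produce the homomorphism $\QK^\poly_T(X)\to\QK^\poly_T(Y)$ with the stated properties --- that is, the theorem itself. As a proof your argument is therefore circular; as a citation it agrees with the paper, which likewise takes the general parabolic case of Kato's theorem as the source of the statement rather than deriving it from the $\Fl(n)$ case. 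If you want an honest derivation from \eqref{E:kato-push} alone, you would need an independent argument for $\ker\Phi_X\subseteq\ker\Phi_Y$, and no such argument simpler than Kato's general construction is available.
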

It follows from Theorem~\ref{thm:kato-morphism} that
Kato's homomorphism extends naturally to
\begin{equation}\label{E:Kato-loc-hom}
\Phi\colon \QK_T^{\loc(\mathbf{\hat r})}(X) \to
\QK_T^{\loc(\mathbf{\hat r})}(Y) ,
\end{equation}
where $\loc(\mathbf{\hat r})$ indicates localization at the multiplicative set generated by
$1- Q_j$ for $j \neq i$.

\subsection[The Toda presentation for Fl(n)]{The Toda presentation for $\boldsymbol{\Fl(n)}$}

The variety $\Fl(n)=\Fl(1,\dots,n-1;n)$ is equipped with tautological vector bundles
\[
 0=\cS_{0}\subset\cS_{1}\subset\dots \subset \cS_{n-1}\subset \cS_{n}=\C^n ,
\]
where $\cS_j$ has rank $r_j$. It can also be viewed as $\SL_n/B$, where $B\subset \SL_n$ is a Borel subgroup. Let~${T\subseteq B}$ be a maximal torus in $\SL_n$.

The following is the main result of~\cite{maeno.naito.sagaki:QKideal} (see Remark~\ref{rmk:mns} for more details). The relation~\eqref{eqn:toda} can also be recovered from the connection between the $J$-function of the full flag variety and the relativistic Toda lattice established by Givental and Lee in~\cite{givental_lee}. This observation was made in the unpublished note~\cite{act:2017} of Anderson--Chen--Tseng, but removed from the published version of their paper. For the sake of completeness, we give a brief account in Appendix~\ref{sec:toda}.

\begin{Theorem}\label{thm:toda}
The ring $\QK_T(\Fl(n))$ is isomorphic to $R'[\![Q]\!]/J'_Q$, where $R'$ is equal to the ring \smash{$\K_T(\pt)\bigl[P_1^{\pm },\dots,P_{n}^\pm\bigr]$} and the ideal $J'_Q\subset R'\dbb{Q}=R'\dbb{Q_1,\dots,Q_{n-1}}$ is generated by the coefficients $y$ in
 \begin{gather}\label{eqn:toda}
 \lambda_y(\C^n) -
 \begin{vmatrix}
 1+y\frac{P_1}{P_0} & y\frac{P_2}{P_1}Q_1 & & & & \\[0.9mm]
 1 & 1+y\frac{P_2}{P_1} & y\frac{P_3}{P_2}Q_2 & & & \\[0.9mm]
 & 1 & 1+y\frac{P_3}{P_2} & y\frac{P_{4}}{P_{3}}Q_3 & & \\
 & & \ddots & \ddots & \ddots & \\
& & & 1 & 1+y\frac{P_{n-1}}{P_{n-2}} & y\frac{P_n}{P_{n-1}}Q_{n-1}\\[1mm]
& & & & 1 & 1+y\frac{P_n}{P_{n-1}}
\end{vmatrix}^\star,
\end{gather}
here $P_0=1$ by convention, and $\lambda_y(\C^n)\in\K_T(\pt)[y]$.

More precisely,
there exists a $\K_T(\pt)[\![Q]\!]$-algebra isomorphism $\Psi'\colon R'[\![Q]\!]/J'_Q \to \QK_T(\Fl(n))$ that sends $P_j$ to $\det \cS_j$ for all $j=1,\dots,n$.
\end{Theorem}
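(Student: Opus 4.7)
The plan is to derive Theorem~\ref{thm:toda} as a determinantal repackaging of the presentation of Maeno--Naito--Sagaki~\cite{maeno.naito.sagaki:QKideal}. In their presentation the generators are the quantum quotients $P_j/P_{j-1}$, corresponding to the line bundles $\cS_j/\cS_{j-1}$, and I would begin by fixing the map on generators $P_j \mapsto \det \cS_j$. The classical specialization $Q=0$ renders the matrix upper triangular, so the determinant factors as $\prod_{j=1}^{n}(1 + y P_j/P_{j-1})$ and the $y$-coefficients recover the familiar Whitney-type presentation of $\K_T(\Fl(n))$, confirming that the map is correct at $Q=0$.

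The main step is to verify that the $y$-coefficients of the tridiagonal determinant \eqref{eqn:toda} generate, modulo this map, the same ideal in $R'[\![Q]\!]$ as the MNS relations. Cofactor expansion along the last row of the tridiagonal matrix produces the three-term recursion
\begin{gather*}
D_n(y) = \Bigl(1 + y \tfrac{P_n}{P_{n-1}}\Bigr) \star D_{n-1}(y) - y \tfrac{P_n}{P_{n-1}} Q_{n-1} \star D_{n-2}(y),
\end{gather*}
with initial data $D_0=1$ and $D_1(y) = 1 + y P_1/P_0$. I would then match this with the recursion satisfied by the generating function of the MNS relations, which, once both sides are interpreted inside $\QK_T(\Fl(n))$ (where $\star$ is commutative), establishes that they generate the same ideal order by order in $y$.

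An independent route, which I would carry out in parallel and is pursued in Appendix~\ref{sec:toda}, goes through the Toda lattice: the determinant \eqref{eqn:toda} is the characteristic polynomial of the finite-difference Toda Lax matrix, with eigenvalues $1+yT_\ell$ forced by the fact that the K-theoretic $J$-function of $\Fl(n)$ is an eigenvector of the first Toda Hamiltonian~\cite{givental_lee}; the mechanism of Iritani--Milanov--Tonita~\cite{Iritani:2013qka}, packaged in Proposition~\ref{thm:ACTI-Kato}, then converts this eigenfunction equation into relations in $\QK_T(\Fl(n))$. The main obstacle in either route is showing that the ideal thus produced is the \emph{entire} ideal of relations: for the MNS route this is essentially the content of~\cite{kato:loop} (and Proposition~\ref{prop:MNS}), while for the Toda route one compares the quantum relations to the known classical presentation via a Nakayama-style argument.
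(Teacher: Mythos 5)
Your proposal matches the paper's treatment: Theorem~\ref{thm:toda} is obtained by citing the Maeno--Naito--Sagaki presentation and rewriting their Toda polynomials in determinantal form via exactly the three-term recursion you state (this is Remark~\ref{rmk:mns} together with Lemma~\ref{L:r-det}), with completeness of the ideal deferred to~\cite{maeno.naito.sagaki:QKideal}/\cite{kato:loop}, and the $J$-function route you sketch in parallel is precisely the content of Appendix~\ref{sec:toda}. The only quibble is cosmetic: at $Q=0$ the matrix becomes lower (not upper) triangular, which does not affect the factorization of the determinant.
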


\begin{Remark} \label{rmk:mns}
 Theorem~\ref{thm:toda} is proved in~\cite{maeno.naito.sagaki:QKideal} using results of Kato~\cite{kato:loop} based on the semi-infinite
 flag variety.
 		The connection between our statement of Theorem~\ref{thm:toda} and that of~\cite{maeno.naito.sagaki:QKideal} is seen as follows. Define the \textit{Toda polynomials} \smash{$T^{(n)}_k$} for $k=1,\dots,n$ by
		\begin{align*}
		T^{(n)}_k = \sum_{0= i_0<\dots<i_k\leq n}\ \prod_{s=1}^{k}\frac{P_{i_s}}{P_{i_{s}-1}}\left(1-Q_{i_s-1}\right)^{1-\delta_{i_s-i_{s-1},1}}.
		\end{align*}
		These elements of $\Z\bigl[P_1^{\pm},\dots,P_{n}^{\pm}\bigr][\![Q]\!]$ (where $P_0=1$ and $Q_0=0$ by convention) are symbols of the finite-difference Toda Hamiltonians~\cite{etingof} (see also~\cite{act:2017,gloI,givental_lee,koroteev}).
		
		Letting \smash{$T^{(n)}=\sum_{k=0}^n T^{(n)}_k y^k$} where \smash{$T^{(n)}_0=1$}, we claim that $T^{(n)}(y)$ is equal to the determinant of the matrix appearing in the Toda relations~\eqref{eqn:toda}, namely,
		\begin{align*}
 T^{(n)}(y)=\begin{vmatrix}
 1+y\frac{P_1}{P_0} & y\frac{P_2}{P_1}Q_1 & & & & \\[0.9mm]
 1 & 1+y\frac{P_2}{P_1} & y\frac{P_3}{P_2}Q_2 & & & \\[0.9mm]
 & 1 & 1+y\frac{P_3}{P_2} & y\frac{P_{4}}{P_{3}}Q_3 & & \\
 & & \ddots & \ddots & \ddots & \\
& & & 1 & 1+y\frac{P_{n-1}}{P_{n-2}} & y\frac{P_n}{P_{n-1}}Q_{n-1}\\[1mm]
& & & & 1 & 1+y\frac{P_n}{P_{n-1}}
\end{vmatrix}^\star.
		\end{align*}
		This is verified by showing that $T^{(n)}$ satisfies the recursion
		\begin{equation*}
T^{(n)}= T^{(n-1)} \biggl(1+ y \frac{P_n}{P_{n-1}}\biggr) - y Q_{n-1} \frac{P_n}{P_{n-1}} T^{(n-2)} ,
		\end{equation*}
		and then applying Lemma~\ref{L:r-det} below (with $n$ playing the role of $j$ there).
		\end{Remark}

 \begin{Remark}
 Upon the specialization $Q_1=\dotsm=Q_{n-1}=0$, the Toda presentation $R'[\![Q]\!]/J'_Q\cong\QK_T(\Fl(n))$ becomes the Borel presentation
 \[
 \K_T(\pt)\bigl[P_1^\pm,\dots,P_n^{\pm}\bigr]/J\cong \K_T(\Fl(n)),
 \]
 where $J$ is the ideal generated by the coefficients of $y$ in
 \[
 \lambda_y(\C^n)-\sideset{}{^\star}\prod_{j=0}^{n-1} (1+yP_{j+1}/P_j)
 \]
 and~$P_j$ corresponds to $\det \cS_j$ for all $j=1,\dots,n$.
 \end{Remark}
 \begin{Lemma}
 \label{L:r-det}
 Suppose $U_j$ for $0\le j\le k+1$ and $A_j$, $B_j$ for $0\le j\le k$ are elements of a~commutative ring with $1$ such that the $U_j$ satisfy the recursion
 \begin{equation*}
 U_{j+1}=A_jU_{j}-B_jU_{j-1},\qquad 0\le j\le k
 \end{equation*}
 with initial conditions $U_0=1$, $U_{-1}=0$. Then, for all $0\le j\le k+1$, one has
 \begin{equation}
 \label{E:r-det}
 U_j=
 \begin{vmatrix}
 A_0 & B_1 & & & \\
 1 & A_1 & B_2 & & \\
 & \ddots & \ddots & \ddots & \\
 & & 1 & A_{j-2} & B_{j-1}\\
 & & & 1 & A_{j-1}
 \end{vmatrix}.
 \end{equation}
 \end{Lemma}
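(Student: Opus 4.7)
The plan is to prove the identity by induction on $j$, expanding the tridiagonal determinant along its last row to recover the very same three-term recursion that defines $U_j$.

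First, I would denote by $D_j$ the $j \times j$ determinant on the right-hand side of~\eqref{E:r-det}, adopting the conventions $D_0 = 1$ (the empty determinant) and $D_{-1} = 0$, so that the initial conditions $U_0 = 1$ and $U_{-1} = 0$ are matched. The base case $D_1 = A_0 = A_0 U_0 - B_0 U_{-1} = U_1$ is immediate from the $1 \times 1$ determinant. It therefore suffices to prove that the $D_j$ satisfy the recursion $D_{j+1} = A_j D_j - B_j D_{j-1}$ for all $0 \le j \le k$, for then a straightforward induction closes the argument.

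To verify this recursion, I would expand $D_{j+1}$ along its bottom row. The only nonzero entries of that row are a $1$ in column $j$ and $A_j$ in column $j+1$. The cofactor of $A_j$ is plainly $D_j$, contributing the term $A_j D_j$. For the cofactor of the entry $1$, I would examine the $j \times j$ minor obtained by deleting the last row and column $j$ of the original matrix. Its upper-left $(j-1) \times (j-1)$ block is exactly the matrix whose determinant is $D_{j-1}$; its upper-right $(j-1) \times 1$ block is zero (because the removed column $j+1$ of the original matrix had its unique nonzero entry $B_j$ in row $j$, which becomes the bottom-right entry of the minor); and the new bottom row is $(0,\ldots,0,1,B_j)$. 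By block triangularity, this minor equals $B_j D_{j-1}$, and the sign $(-1)^{(j+1)+j} = -1$ yields the contribution $-B_j D_{j-1}$. Summing gives
\[
D_{j+1} = A_j D_j - B_j D_{j-1},
\]
which is exactly the recursion satisfied by $U_j$.

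No genuine obstacle is expected: the argument is a routine cofactor expansion, and the only care required is in correctly identifying the block structure of the minor attached to the subdiagonal entry $1$ in the last row and tracking its sign. The identification $U_j = D_j$ then follows by induction on $j$ up to $j = k+1$.
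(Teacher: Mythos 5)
Your proof is correct and follows the same route as the paper: expand the determinant along its last row to show it satisfies the defining three-term recursion, then match the initial values $U_0=1$, $U_1=A_0$ and conclude by induction. The only quibble is a slip of wording where you refer to "the removed column $j+1$" (you removed column $j$; column $j+1$ is the one retained as the last column of the minor), but the block-triangular computation itself is right.
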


 \begin{proof}
 One simply expands along the last row or column to see that the determinant in~\eqref{E:r-det} satisfies the recursion. Observe that the initial values $U_0=1$ and $U_1=A_0$ agree. This completes the proof.
 \end{proof}

 Before finishing this section, we record the following, which follows from~\cite[Proposition~5.2]{maeno.naito.sagaki:QKideal}.

 \begin{Proposition}[Maeno--Naito--Sagaki]\label{prop:MNS}
 In $\QK_T(\Fl(n))$, the following relations hold:
 \begin{gather*}
 \det \cS_i \star \det \cS_j/\cS_i = (1-Q_i) \det \cS_j, \qquad 1 \le i < j \le n.
 \end{gather*}
 \end{Proposition}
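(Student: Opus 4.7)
The plan is to interpret $\det\cS_i$ as the line bundle on $\Fl(n)$ associated to the fundamental weight $\omega_i$, and to derive the identity from the quantum K Chevalley-type behavior of multiplication by such a line bundle. The classical backbone is immediate: the short exact sequence $0\to\cS_i\to\cS_j\to\cS_j/\cS_i\to 0$ gives $\det\cS_i\cdot\det(\cS_j/\cS_i)=\det\cS_j$ in $\K_T(\Fl(n))$, matching the $Q=0$ specialization of the claim. The whole content therefore lies in pinning down the quantum corrections and showing that they collapse to the single term $-Q_i\det\cS_j$.

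My first step would be to expand $\det\cS_i\star\det(\cS_j/\cS_i)$ using the quantum K divisor axiom~\cite{LNSX:QKdiv}, which governs star multiplication by a line bundle; this writes the star product as $\det\cS_j+\sum_d Q^d c_d$ with $c_d\in\K_T(\Fl(n))$. The second step is to identify which degrees $d$ contribute: using that $\det(\cS_j/\cS_i)$ has weight $\omega_j-\omega_i$, which pairs nontrivially with the $i$-th simple coroot only along the wall between $\cS_i$ and $\cS_{i+1}$, one expects only $d=e_i$ to survive. The third step is to compute $c_{e_i}$ and verify that it equals $-\det\cS_j$.

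The main obstacle is this last computation. A direct Gromov--Witten analysis seems impractical, so I would instead transfer the whole identity across Kato's quantum$=$affine isomorphism~\cite{kato:loop}, which is essentially the route taken in~\cite[Proposition~5.2]{maeno.naito.sagaki:QKideal}: on the affine side, $\det\cS_i$, $\det\cS_j$, and $\det(\cS_j/\cS_i)$ correspond to specific elements in the K-homology of the affine Grassmannian of $\SL_n$, and the identity reduces to a Pieri- or Chevalley-type relation involving translations in the affine Weyl group, with the factor $(1-Q_i)$ emerging from the normalization that sends $Q_i$ to a specific loop parameter. A possible alternative is induction on $j-i$: the base case $j=i+1$ reduces to a Chevalley computation for the Schubert divisor $\det(\cS_{i+1}/\cS_i)$, and the inductive step would use associativity of $\star$ and the compatibility of the forgetful morphism $\Fl(n)\to\Fl(j+1)$; the delicate point here is that classical factorizations of line bundles do not naively survive in $\QK_T$, so extra care via the quantum K Chevalley formula would be required at each inductive stage.
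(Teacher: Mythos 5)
The paper gives no independent proof of this proposition: it simply records it as a consequence of \cite[Proposition~5.2]{maeno.naito.sagaki:QKideal}, which is exactly the external result your proposal ultimately falls back on (via Kato's quantum$=$affine results), so your approach coincides with the paper's. The preliminary scaffolding you sketch --- the divisor-axiom expansion and the claim that only $d=e_i$ contributes --- is neither carried out rigorously nor needed once you invoke that citation, but since the substance of both arguments is the appeal to Maeno--Naito--Sagaki, there is no real discrepancy.
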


\section[Toda-type presentations for the equivariant quantum K theory of partial flag varieties]{Toda-type presentations for the equivariant quantum\\ K theory of partial flag varieties}

To begin, we observe that the Toda presentation in Theorem~\ref{thm:toda} can be rewritten as follows.
\begin{Corollary}\label{cor:fln}
 The ring $\QK_T(\Fl(n))$ is isomorphic to $R\dbb{Q}/J_Q$, where
 \[
 R=\K_T(\pt)\bigl[Y^{(0)}, \dots, Y^{(n-1)}\bigr],
 \]
 and $J_Q\subset R\dbb{Q}=R\dbb{Q_1,\dots,Q_{n-1}}$ is generated by the coefficients of $y$ in
 \begin{gather*}
 \lambda_y(\C^n) - \\
 \begin{vmatrix}
 1+yY^{(0)}\frac{1}{1-Q_0} & yY^{(1)}\frac{Q_1}{1-Q_1} & & & & \\[1mm]
 1 & 1+yY^{(1)}\frac{1}{1-Q_1} & yY^{(2)}\frac{Q_2}{1-Q_2} & & & \\ 
& \ddots & \ddots & \ddots \\ & & 1 & 1+yY^{(n-2)}\frac{1}{1-Q_{n-2}} & yY^{(n-1)}\frac{Q_{n-1}}{1-Q_{n-1}}\\[1mm] & & & 1 & 1+yY^{(n-1)}\frac{1}{1-Q_{n-1}}
\!\!\!\end{vmatrix}^\star
\end{gather*}
with the convention that $Q_0=0$.

More precisely,
there exists a $\K_T(\pt)[\![Q]\!]$-algebra isomorphism $\Psi\colon R[\![Q]\!]/J_Q \to \QK_T(\Fl(n))$ that sends $Y^{(j)}$ to $\cS_{j+1}/\cS_j$ for $j=1,\dots,n-1$.
\end{Corollary}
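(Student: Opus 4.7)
My approach is to deduce Corollary~\ref{cor:fln} from Theorem~\ref{thm:toda} via a change of variables, with Proposition~\ref{prop:MNS} supplying the key identity. Specialized to $j=i+1$, Proposition~\ref{prop:MNS} reads $\det\cS_i\star(\cS_{i+1}/\cS_i)=(1-Q_i)\det\cS_{i+1}$ in $\QK_T(\Fl(n))$, so the line bundle $\cS_{i+1}/\cS_i$ plays the role of $(1-Q_i)\det\cS_{i+1}/\det\cS_i$; equivalently, under the Theorem~\ref{thm:toda} isomorphism $\Psi'$, the element $(1-Q_i)P_{i+1}/P_i\in R'$ corresponds to $\cS_{i+1}/\cS_i$.

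The first step is to introduce the $\K_T(\pt)[\![Q]\!]$-algebra homomorphism
\[
\alpha\colon R[\![Q]\!] \longrightarrow R'[\![Q]\!], \qquad Y^{(j)} \longmapsto (1-Q_j)P_{j+1}/P_j,
\]
which is well defined because $P_j^{\pm}\in R'$. A direct entrywise substitution (using $Q_0=0$) transforms the tridiagonal matrix in Corollary~\ref{cor:fln} into the one in Theorem~\ref{thm:toda}: the diagonal entry $1+yY^{(j)}/(1-Q_j)$ maps to $1+yP_{j+1}/P_j$, and the superdiagonal entry $yY^{(j+1)}Q_{j+1}/(1-Q_{j+1})$ maps to $y(P_{j+2}/P_{j+1})Q_{j+1}$. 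Since the determinant is a universal polynomial in the matrix entries, $\alpha$ carries the coefficients of $y$ in one expression to those of the other, so $\alpha(J_Q)\subseteq J'_Q$. This yields an induced map $\bar\alpha\colon R[\![Q]\!]/J_Q\to R'[\![Q]\!]/J'_Q$, and I set $\Psi\coloneqq\Psi'\circ\bar\alpha$. By construction and Proposition~\ref{prop:MNS}, $\Psi(Y^{(j)})=\cS_{j+1}/\cS_j$ as required.

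It remains to show that $\Psi$ is an isomorphism. Surjectivity is immediate: the classes $\cS_{j+1}/\cS_j$ generate $\K_T(\Fl(n))$ as a $\K_T(\pt)$-algebra via the classical Borel presentation, hence they also generate $\QK_T(\Fl(n))$ over $\K_T(\pt)[\![Q]\!]$. Injectivity is the step I expect to be the main obstacle. The plan here is a Nakayama-type argument in the $(Q)$-adic topology: the ring $R[\![Q]\!]/J_Q$ is $(Q)$-adically complete, being a quotient of the complete ring $R[\![Q]\!]$ by the finitely generated (hence closed) ideal $J_Q$; modulo $(Q)$, the map $\Psi$ reduces to the classical Borel isomorphism between $R/J_0$ and $\K_T(\Fl(n))$, an isomorphism of free $\K_T(\pt)$-modules of rank $n!$. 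Since $\QK_T(\Fl(n))$ is also a free $\K_T(\pt)[\![Q]\!]$-module of rank $n!$, a standard topological Nakayama argument applied to the kernel of the surjection $\Psi$ forces that kernel to vanish, completing the proof.
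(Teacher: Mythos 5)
Your proposal is correct, and its core is the same change of variables as the paper's own proof: identifying $Y^{(j)}$ with $(1-Q_j)P_{j+1}/P_j$ matches the two tridiagonal determinants entry by entry, and Proposition~\ref{prop:MNS} converts $(1-Q_j)\det\cS_{j+1}\star(\det\cS_j)^{-1}$ into $\det(\cS_{j+1}/\cS_j)$, giving the stated geometric interpretation of $\Psi$. Where you genuinely diverge is in establishing bijectivity. The paper simply writes down the substitution in the opposite direction, $P_j\mapsto\prod_{i=0}^{j-1}Y^{(i)}/(1-Q_i)$; this is well defined on $R[\![Q]\!]/J_Q$ because the top coefficient of the relation gives $\det\C^n=\prod_i Y^{(i)}/(1-Q_i)$, forcing each $Y^{(i)}$ to be invertible in the quotient, and the two substitutions are mutually inverse on the quotient rings. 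That makes bijectivity immediate, with no appeal to freeness of $\QK_T(\Fl(n))$ over $\K_T(\pt)[\![Q]\!]$ or to the classical Borel presentation $R/J_0\cong\K_T(\Fl(n))$. Your route also works: $R[\![Q]\!]$ is Noetherian and $(Q)$-adically complete, so $J_Q$ is closed and the quotient is complete, the surjection onto the free module $\QK_T(\Fl(n))$ splits, and topological Nakayama kills the kernel once the reduction mod $(Q)$ is the Borel isomorphism; likewise the completeness argument upgrades classical generation by the $\cS_{j+1}/\cS_j$ to quantum generation. The trade-off is that you import Nakayama-type machinery (and the classical Borel presentation as an extra input) for what is, in the paper, just an explicitly invertible change of coordinates between two presentations of the same ring.
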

\begin{proof}
 Identifying $P_{j+1}/P_j$ with $Y^{(j)}/(1-Q_j)$ gives an isomorphism between $R[\![Q]\!]/J_Q$ and $R'[\![Q]\!]/J'_Q$. More precisely, define a $\K_T(\pt)[\![Q]\!]$ homomorphism $\Phi\colon R'[\![Q]\!]/J'_Q\to R[\![Q]\!]/J_Q$ by
\[
 \Phi(P_j)=\prod_{i=0}^{j-1}\frac{Y^{(i)}}{1-Q_i}, \qquad 1\leq j\leq n-1.
\]
Note that in $R[\![Q]\!]/J_Q$, we have \smash{$\det\C^{n}=\prod_{i=0}^{n-1}Y^{(i)}/(1-Q_i)$}, which implies all $Y^{(j)}$ are invertible. Since the relations match, the homomorphism $\Psi$ is well-defined and injective. Since \smash{$(1-Q_j)P_{j+1}/P_j$} is sent to $Y^{(j)}$ for $0\leq j\leq n-1$, it is also surjective. Finally, the~geometric interpretation follows from Proposition~\ref{prop:MNS}.
\end{proof}

Next, we generalize Corollary~\ref{cor:fln} to all partial flag varieties utilizing Theorem~\ref{thm:kato-morphism}, Proposition~\ref{prop:MNS}, and the Nakayama-type result from~\cite{GMSXZZ:QKW, gu2022quantum}.

\begin{Theorem}\label{thm:whit-short}
 In $\QK_T(\Fl(\bfr,n))[y]$, the following relation hold:
 \begin{align}\label{eqn:whit-short}
 \lambda_y(\C^n)-
 &\begin{vmatrix}
 A_0 & B_1 & & & \\
 1 & A_1 & B_2 & & \\ & \ddots & \ddots & \ddots & \\ & & 1 & A_{k-1} & B_{k}\\ & & & 1 & A_{k}
 \end{vmatrix}^\star,
 \end{align}
 where \[
 B_j=y^{r_{j+1}-r_j}\frac{Q_j}{1-Q_j}\det(\cS_{j+1}/\cS_j),\qquad
 A_j=\lambda_y(\cS_{j+1}/\cS_j)+B_j.
\]
\end{Theorem}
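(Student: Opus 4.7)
The plan is decreasing induction on $k$. The base case $k=n-1$ is exactly Corollary~\ref{cor:fln}: when every block has size one, $\lambda_y(\cS_{j+1}/\cS_j)=1+yY^{(j)}$ and $B_j=yY^{(j)}Q_j/(1-Q_j)$, so the asserted $(k+1)\times(k+1)$ determinant reduces to the complete-flag Toda tridiagonal.

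For the inductive step, form $\bfr^+=(r_1,\dots,r_i,r^*,r_{i+1},\dots,r_k)$ by inserting an extra index $r^*$ strictly between $r_i$ and $r_{i+1}$, and apply Kato's $\K_T(\pt)$-algebra homomorphism $\Phi$ from \eqref{E:Kato-loc-hom} associated to the projection $\pi\colon \Fl(\bfr^+,n)\to\Fl(\bfr,n)$, after localizing at $1-Q_{r^*}$. Under $\Phi$ the quantum parameter $Q_{r^*}$ specializes to $1$, the remaining $Q$'s are preserved, and K-classes are pushed forward by $\pi_*$. By the inductive hypothesis \eqref{eqn:whit-short} holds for $\bfr^+$; applying $\Phi$ to both sides preserves the LHS (which lies in $\K_T(\pt)$) and yields the desired relation for $\bfr$, provided one can identify $\Phi$ of the $(k+2)\times(k+2)$ determinant with the $(k+1)\times(k+1)$ determinant of the theorem.

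The identification is local: only the two $\bfr^+$-blocks at positions $i,i+1$ (of sizes $r^*-r_i$ and $r_{i+1}-r^*$), joined by the off-diagonal $B_{i+1}$, are affected, and they must fuse under $\Phi$ into the single $\bfr$-block at position $j$. The main obstacle is that $B_{i+1}$ carries the factor $Q_{r^*}/(1-Q_{r^*})$, which is singular at $Q_{r^*}=1$; the sub-determinant $A_i\star A_{i+1}-B_{i+1}$ therefore cannot be specialized directly. To handle this, I invoke Proposition~\ref{prop:MNS} to rewrite quantum products of the form $\det \cS_{r^*}\star\det(\cS_{r_{i+1}}/\cS_{r^*})=(1-Q_{r^*})\det\cS_{r_{i+1}}$ (and their $\wedge^\ell$ analogues arising when expanding $A_i\star A_{i+1}$), producing a compensating $(1-Q_{r^*})$ that cancels the denominator in $B_{i+1}$ before the specialization is taken.

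Once the sub-determinant is recast in this pole-free form, the relative Kapranov isomorphism of Proposition~\ref{prop:relativeBWB} computes the pushforward along the intermediate Grassmann-bundle fiber $\Gr(r^*-r_i,\cS_{r_{i+1}}/\cS_{r_i})$: the symmetric functions in the Chern roots $(Y^{(i)}_\bull,Y^{(i+1)}_\bull)$ of the two fused blocks are identified with the elementary symmetric polynomials $e_\ell(\cS_{j+1}/\cS_j)$ on $\Fl(\bfr,n)$. One then checks that the collapsed diagonal entry equals $\lambda_y(\cS_{j+1}/\cS_j)+B_j^{\mathrm{new}}$ and that the new inter-block off-diagonal equals $B_j^{\mathrm{new}}$. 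Assembling these local contributions via the tridiagonal recursion of Lemma~\ref{L:r-det} (expanding across rows $i$ and $i+1$) matches the $\Phi$-image of the $\bfr^+$-determinant with the $\bfr$-determinant of the theorem, closing the induction.
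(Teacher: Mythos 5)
Your proposal follows essentially the same route as the paper: decreasing induction starting from the Toda presentation of $\Fl(n)$, pushing the relation forward one collapsed index at a time via Kato's localized homomorphism, using Proposition~\ref{prop:MNS} to produce the factor $(1-Q)$ that cancels the pole before specializing $Q\mapsto 1$, using Proposition~\ref{prop:relativeBWB} for the fiberwise pushforwards, and matching the two determinants by expanding along the affected rows and columns. The one detail you gloss over is that the paper's explicit computation of the fused diagonal entry (Lemma~\ref{lemma:main-push}(d)) assumes the absorbed block is a line bundle, which is arranged by removing indices from the full flag in a suitable order.
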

\begin{proof}
Let $X=\Fl(r_1,\dots,r_k;n)$, $Y=\Fl\bigl(r_1,\dots, \widehat{r}_i,\dots, r_k;n\bigr)$, and $\pi\colon X\to Y$ be the natural map. Let
\[
 0=\cS_0\subset\cS_1\subset\dots\subset\cS_k\subset\cS_{k+1}=\C^n
\]
be the sequence of tautological bundles on $X$. Note that all but $\cS_i$ are pulled back from $Y$. With a slight abuse of notation, we denote the sequence of tautological bundles on $Y$ by
\[
 0=\cS_0\subset\cS_1\subset\dots\cS_{i-1}\subset\cS_{i+1}\subset\dots\subset\cS_k\subset\cS_{k+1}=\C^n.
\]
Note that the elements $B_1,\dots,B_{i-2},B_{i+1},\dots,B_k$ as well as $A_1.\dots, A_{i-2},A_{i+1},\dots,A_k$ in the ring $\QK_T(X)[y]$ stay the same under pushforward along $\pi$. By a slight abuse of notation, we~also think of them as elements of $\QK_T(Y)[y]$.

By induction, we assume that relation~\eqref{eqn:whit-short} holds for $X$, i.e.,
\begin{gather}\label{eqn:whitX}
 \lambda_y(\C^n)-\begin{vmatrix}
 A_0 & B_1 & & & & & & & & & \\
 1 & A_1 &B_2 & & & & & & & & \\
 & \ddots & \ddots & \ddots & & & & & & & \\
 & & \ddots & \ddots & B_{i-2} & & &\\ & & & 1 & A_{i-2} & B_{i-1} & & & & &\\ & & & & 1 & A_{i-1} & B_i & & & &\\
 & & & & & 1 & A_i & B_{i+1} & & &\\
 & & & & & & 1 & A_{i+1} & \ddots & &
 \\
 & & & & & & & \ddots & \ddots & \ddots & \\
 & & & & & & & & 1 & A_{k-1} & B_{k}\\
 & & & & & & & & & 1 & A_{k}
 \end{vmatrix}^\star
\end{gather}
holds in \smash{$\QK_T^{\loc(\mathbf{r})}(X)[y]$}
for $1 \le j \le k$, and we will show that the (localized) Kato's pushforward~\eqref{E:Kato-loc-hom} of this relation gives relation~\eqref{eqn:whit-short} on $Y$.

Relation~\eqref{eqn:whit-short} on $Y$ reads
\begin{gather}\label{eqn:whitY}
 \lambda_y(\C^n)-\begin{vmatrix}
 A_0 & B_1 & & & \\
 1 & A_1 & B_2 & & \\ & \ddots & \ddots & \ddots & \\
 & & \ddots & \ddots & B_{i-2}\\ & & & 1 & A_{i-2} & B'_{i-1}\\ & & & & 1 & A'_{i-1} & B_{i+1}\\ & & & & & 1 & A_{i+1} & \ddots
 \\ & & & & & & \ddots & \ddots & \ddots \\ & & & & & & & 1 & A_{k-1} & B_{k}\\ & & & & & & & & 1 & A_{k}
 \end{vmatrix}^\star,
\end{gather}
where
\begin{gather*}
 B_{i-1}'=y^{r_{i+1}-r_{i-1}}\frac{Q_{i-1}}{1-Q_{i-1}}\det\left(\cS_{i+1}/\cS_{i-1}\right),\qquad A_{i-1}'=\lambda_y(\cS_{i+1}/\cS_{i-1})+B'_{i-1},
\end{gather*}
regarded as elements in \smash{$\QK_T^{\loc(\widehat{\mathbf{r}})}(Y)[y]$}.

By the projection formula, to prove~\eqref{eqn:whitY}, it suffices to prove the pushforward along $\pi$ of~\eqref{eqn:whitX} agrees with~\eqref{eqn:whitY}. We compare the two determinants by expanding along columns. Expanding along the column containing $B'_{i-1}$, we have that the determinant in~\eqref{eqn:whitY} is of the form
\begin{gather*}
 -B'_{i-1}\star C'+A'_{i-1}\star D'-E';
\end{gather*}
expanding along the two columns containing $B_{i-1}$ or $B_i$, we have that the determinant in~\eqref{eqn:whitX} is of the form
\begin{gather*}
 \begin{vmatrix}B_{i-1} & 0\\
 A_{i-1} & B_{i}\end{vmatrix}^\star\star0-\begin{vmatrix}
 B_{i-1} & 0\\
 1 & A_{i}
 \end{vmatrix}^\star\star C +\begin{vmatrix}
 B_{i-1} & 0\\
 0 & 1
 \end{vmatrix}^\star\star F\\
 \qquad{} +\begin{vmatrix}
 A_{i-1} & B_i\\1 & A_i
 \end{vmatrix}^\star\star D -\begin{vmatrix}
 A_{i-1} & B_i\\
 0 & 1
 \end{vmatrix}^\star\star E+\begin{vmatrix}
 1 & A_i\\
 0 & 1
 \end{vmatrix}^\star\star 0.
 \end{gather*}
Note that $C$, $D$, $E$, $F$ stay the same under the pushforward, and it is straightforward to check that
\[
C'=C,\qquad D'=D,\qquad E'=E.
\]
The rest follows from Lemma~\ref{lemma:main-push} below.
\end{proof}

\begin{Lemma}\label{lemma:main-push} The following hold:
 \begin{enumerate}[label=$(\alph*)$]\itemsep=0pt
 \item $\pi_* \begin{vmatrix}
 A_{i-1} & B_i\\
 0 & 1
 \end{vmatrix}^\star= 1$;
 \item $\pi_* \begin{vmatrix}
 B_{i-1} & 0\\
 0 & 1
 \end{vmatrix}^\star = 0$;
 \item $\pi_* \begin{vmatrix} B_{i-1} & 0\\ 1 & A_{i} \end{vmatrix}^\star = B_{i-1}'$;
 \item Assume that $r_i - r_{i-1} = 1$. Then
 \[
 \pi_* \begin{vmatrix} A_{i-1} & B_i\\1 & A_i
 \end{vmatrix}^\star = A_{i-1}'.
 \]
 \end{enumerate}
 \end{Lemma}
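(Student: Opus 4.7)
The plan is to exploit two inputs. First, I will identify $\pi\colon X\to Y$ with the Grassmann bundle $\mathbb{G}(r_i-r_{i-1},\cS_{i+1}/\cS_{i-1})\to Y$, so that $\cS_i/\cS_{i-1}$ and $\cS_{i+1}/\cS_i$ play the roles of the tautological subbundle $\underline{\cS}$ and quotient $\underline{\cQ}$, while $\cS_{i+1}/\cS_{i-1}$ (and everything else) is pulled back from $Y$. Second, combining the three instances of Proposition~\ref{prop:MNS} (transferred from $\Fl(n)$ to $X$ via Kato's ring homomorphism) for the pairs $(\cS_{i-1},\cS_i)$, $(\cS_i,\cS_{i+1})$, $(\cS_{i-1},\cS_{i+1})$ and cancelling the non-zero-divisor $\det\cS_{i-1}$ will yield the key identity $\det(\cS_i/\cS_{i-1})\star\det(\cS_{i+1}/\cS_i)=(1-Q_i)\det(\cS_{i+1}/\cS_{i-1})$ in $\QK_T(X)$. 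This identity is essential because it cancels the $(1-Q_i)^{-1}$ pole in $B_i$, so that products such as $B_{i-1}\star B_i$ (for (c)) and, under the hypothesis $r_i-r_{i-1}=1$, $L_i\star B_i$ with $L_i=\cS_i/\cS_{i-1}$ (for (d)), become elements of $\QK_T^{\loc(\widehat{\mathbf{r}})}(X)$, on which Kato's pushforward $\Phi$ from~\eqref{E:Kato-loc-hom} (i.e.\ the $\pi_*$ of the lemma) is defined and satisfies the ring-map property $\Phi(\alpha\star\beta)=\Phi(\alpha)\star\Phi(\beta)$.

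Parts (a) and (b) will then fall out of Kapranov's Proposition~\ref{prop:relativeBWB}(1): $\Phi(\wedge^\ell\underline{\cS})=0$ for $\ell>0$, so $\Phi(\lambda_y(\cS_i/\cS_{i-1}))=1$ and $\Phi(\det(\cS_i/\cS_{i-1}))=0$, hence $\Phi(B_{i-1})=0$, which is (b), and $\Phi(A_{i-1})=1+0=1$, which is (a). For (c) and (d) I will also need Proposition~\ref{prop:relativeBWB}(2): $\Phi(\wedge^\ell(\cS_{i+1}/\cS_i))=\wedge^\ell(\cS_{i+1}/\cS_{i-1})$ for $\ell\le r_{i+1}-r_i$, and $0$ otherwise. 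A crucial consequence is that $\Phi(\lambda_y(\cS_{i+1}/\cS_i))$ equals $\lambda_y(\cS_{i+1}/\cS_{i-1})$ \emph{truncated} at $y^{r_{i+1}-r_i}$; the terms in degrees $\ell>r_{i+1}-r_i$ of $\lambda_y(\cS_{i+1}/\cS_{i-1})$ are simply absent, and this absence is exactly what will produce the correct $B'_{i-1}$ contribution in (d).

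For (c), I split $B_{i-1}\star A_i=B_{i-1}\star\lambda_y(\cS_{i+1}/\cS_i)+B_{i-1}\star B_i$. The first summand maps to $0$ under $\Phi$ by the ring-map property and $\Phi(B_{i-1})=0$. For the second, the key identity reduces $B_{i-1}\star B_i$ to $\frac{y^{r_{i+1}-r_{i-1}}Q_{i-1}Q_i}{1-Q_{i-1}}\det(\cS_{i+1}/\cS_{i-1})$, and applying $\Phi$ (which sends $Q_i\mapsto 1$ and fixes the pulled-back class $\det(\cS_{i+1}/\cS_{i-1})$) gives exactly $B'_{i-1}$. For (d), with $A_{i-1}=1+\frac{yL_i}{1-Q_{i-1}}$, I decompose $A_{i-1}\star A_i-B_i=A_{i-1}\star\lambda_y(\cS_{i+1}/\cS_i)+(A_{i-1}-1)\star B_i$. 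The first summand has $\Phi$-image $\lambda_y(\cS_{i+1}/\cS_{i-1})-y^{r_{i+1}-r_{i-1}}\det(\cS_{i+1}/\cS_{i-1})$ (by truncation). The key identity (with $L_i=\det(\cS_i/\cS_{i-1})$, using $r_i-r_{i-1}=1$) reduces $(A_{i-1}-1)\star B_i$ to $\frac{y^{r_{i+1}-r_{i-1}}Q_i}{1-Q_{i-1}}\det(\cS_{i+1}/\cS_{i-1})$, whose $\Phi$-image is $\frac{y^{r_{i+1}-r_{i-1}}}{1-Q_{i-1}}\det(\cS_{i+1}/\cS_{i-1})$. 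Summing, the coefficient of $y^{r_{i+1}-r_{i-1}}\det(\cS_{i+1}/\cS_{i-1})$ collapses as $-1+\frac{1}{1-Q_{i-1}}=\frac{Q_{i-1}}{1-Q_{i-1}}$ to give $B'_{i-1}$, so the total equals $\lambda_y(\cS_{i+1}/\cS_{i-1})+B'_{i-1}=A'_{i-1}$.

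The main obstacle will be the simultaneous interplay of two delicate features: the pole of $B_i$ at $Q_i=1$, which only cancels after invoking the MNS-derived identity, and the truncation of $\Phi(\lambda_y(\cS_{i+1}/\cS_i))$, which removes exactly the top-degree term needed to balance the $\frac{1}{1-Q_{i-1}}$ produced in the second summand. Overlooking either feature gives the wrong coefficient for $y^{r_{i+1}-r_{i-1}}\det(\cS_{i+1}/\cS_{i-1})$ in (d).
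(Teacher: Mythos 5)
Your proposal is correct and follows essentially the same route as the paper: realize $\pi$ as the Grassmann bundle $\mathbb{G}(r_i-r_{i-1},\cS_{i+1}/\cS_{i-1})\to Y$, use Kapranov's pushforward formulas for the $\lambda_y$ classes of the tautological sub- and quotient bundles, and cancel the $(1-Q_i)^{-1}$ poles via the Maeno--Naito--Sagaki relations transported to $\QK_T(X)$ by Kato's ring homomorphism. The only differences are cosmetic and harmless: you package the MNS relations into the single identity $\det(\cS_i/\cS_{i-1})\star\det(\cS_{i+1}/\cS_i)=(1-Q_i)\det(\cS_{i+1}/\cS_{i-1})$ on $X$ (avoiding the paper's detour through $\det\cS_{i+1}/\det\cS_{i-1}$ and the relation on $Y$), and in part (d) you split $A_{i-1}\star A_i-B_i$ as $A_{i-1}\star\lambda_y(\cS_{i+1}/\cS_i)+(A_{i-1}-1)\star B_i$ rather than reusing part (c) as the paper does; both bookkeepings produce the same cancellation $-1+\tfrac{1}{1-Q_{i-1}}=\tfrac{Q_{i-1}}{1-Q_{i-1}}$ and hence $A'_{i-1}$.
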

 \begin{proof}
 Note that $X$ may be realized as the Grassmann bundle $\mathbb{G}(r_i-r_{i-1}, \cS_{i+1}/\cS_{i-1})$
 over~$Y$, with tautological sequence $0 \to \cS_i/\cS_{i-1} \to \cS_{i+1}/\cS_{i-1} \to \cS_{i+1}/\cS_{i} \to 0$.
 It follows from Proposition~\ref{prop:relativeBWB} that
 \begin{equation}\label{eqn:pushforwards-lambda}
 \pi_*(\lambda_y(\cS_{i+1}/\cS_{i}))=\sum_{j=0}^{r_{i+1}-r_i}y^j\wedge^j(\cS_{i+1}/\cS_{i-1}), \qquad\pi_* (\lambda_y(\cS_i/\cS_{i-1})) = 1.
 \end{equation}

 For (a), (b), note that \smash{$A_{i-1}, B_{i-1}\in \QK_T^{\loc(\mathbf{\hat r})}(X)$}, so we may use~\eqref{E:Kato-loc-hom}, and it follows that
 \begin{gather*}
 \pi_*B_{i-1}=0,\qquad \pi_*A_{i-1} = 1.
 \end{gather*}
 Note that by Proposition~\ref{prop:MNS} and Theorem~\ref{thm:kato-morphism}, we have
 \begin{gather}
 \det\cS_{j}\star\det(\cS_{j+1}/\cS_{j})=(1-Q_{j})\det\cS_{j+1}\qquad \text{for $0\leq j\leq k$, in $\QK_T(X)$},\label{eqn:relX}\\
 \det\cS_{i-1}\star\det(\cS_{i+1}/\cS_{i-1})=(1-Q_{i-1})\det\cS_{i+1}\qquad \text{in $\QK_T(Y)$}.\label{eqn:relY}
\end{gather}
 To prove (c), we obtain from definition
 \begin{align}
 \begin{vmatrix}
 B_{i-1} & 0\\
 1 & A_{i}
 \end{vmatrix}^\star
 &{}= B_{i-1}A_i=B_{i-1} \star (\lambda_y(\cS_{i+1}/\cS_i)+B_i)\nonumber\\
 &{}= B_{i-1}\star \lambda_y(\cS_{i+1}/\cS_i)+ B_{i-1} \star B_i .\qquad\label{eqn:3-expand}
 \end{align}
 The element $B_{i}$ cannot be pushed forward, as it contains $1-Q_i$ in the denominator. However,
 we use~\eqref{eqn:relX} to calculate
 \begin{align*}
 B_{i-1} \star B_i &{}= y^{r_{i+1}-r_{i-1}} \frac{Q_{i-1} Q_i}{(1-Q_{i-1})(1-Q_i)} \det (\cS_{i+1}/\cS_i) \star \det (\cS_{i}/\cS_{i-1}) \\
 &{}= y^{r_{i+1}-r_{i-1}}Q_{i-1}Q_i\frac{\det\cS_{i+1}}{\det\cS_{i-1}} ,
 \end{align*}
 where the inverse is calculated in the quantum K ring of $X$. By~\eqref{eqn:relX} again,
 \[ \frac{\det\cS_{i+1}}{\det\cS_{i-1}} = \frac{\det \cS_{i+1} \star\det \C^n/\cS_{i-1}}{(1-Q_{i-1})\det \C^n}\qquad \text{in $\QK_T^{\loc(\mathbf{\hat r})}(X)$}, \]
 and its pushforward is
\begin{gather}\label{eqn:3}
 \frac{\det\cS_{i+1}}{\det\cS_{i-1}}\in \QK_T^{\loc(\mathbf{\hat r})}(Y).
\end{gather}
Note that by~\eqref{eqn:relY},
expression \eqref{eqn:3} is equal to
\begin{gather*}
 \frac{\det\left(\cS_{i+1}/\cS_{i-1}\right)}{1-Q_{i-1}}\qquad \text{in} \ \QK_T^{\loc(\mathbf{\hat r})}(Y).
\end{gather*}
Using \eqref{eqn:pushforwards-lambda} and \eqref{eqn:3-expand}, the projection formula, and Theorem~\ref{thm:kato-morphism}, it follows that
\begin{gather*}
 \pi_*\begin{vmatrix}
 B_{i-1} & 0\\
 1 & A_{i}
 \end{vmatrix}^\star=\pi_*\left(B_{i-1}\star B_i\right)=y^{r_{i+1}-r_{i-1}}\frac{Q_{i-1}}{1-Q_{i-1}}\det\left(\cS_{i+1}/\cS_{i-1}\right)=B'_{i-1}.
\end{gather*}
 For (d), we calculate
 \[ \begin{vmatrix}
 A_{i-1} & B_i\\1 & A_i
 \end{vmatrix}^\star
 = \begin{vmatrix}
 \lambda_y(\cS_{i}/\cS_{i-1}) + B_{i-1} & B_i\\1 & A_i
 \end{vmatrix}^\star
 = A_i \star \lambda_y(\cS_{i}/\cS_{i-1}) + A_i \star B_{i-1} - B_i. \]
 From (c), $\pi_* (A_i \star B_{i-1}) = B_{i-1}'$, therefore it suffices to show that
 $A_i \star \lambda_y(\cS_{i}/\cS_{i-1}) - B_i$ may be pushed forward, and that
 \begin{gather}\label{eqn:nts}
 \pi_*\left(A_i \star \lambda_y(\cS_{i}/\cS_{i-1}) - B_i\right) = \lambda_y(\cS_{i+1}/\cS_{i-1}) .
 \end{gather}
 The hypothesis $r_i - r_{i-1} =1$ implies that $\cS_i/\cS_{i-1}$ is a line bundle, and that
 \begin{gather*}
 A_i \star \lambda_y(\cS_{i}/\cS_{i-1}) - B_i\\
 \qquad{} = \lambda_y(\cS_{i+1}/\cS_{i}) \star \lambda_y(\cS_{i}/\cS_{i-1})
 + y^{r_{i+1}-r_{i-1}} \frac{Q_i}{1-Q_i} \det (\cS_{i+1}/\cS_{i}) \star \det (\cS_{i}/\cS_{i-1}) .
 \end{gather*}
 By \eqref{eqn:pushforwards-lambda}, we have
 \[
 \pi_*(\lambda_y(\cS_{i+1}/\cS_{i}))=\lambda_y(\cS_{i+1}/\cS_{i-1})-y^{r_i-r_{i-1}}\det(\cS_{i+1}/\cS_{i-1}), \qquad\pi_*(\lambda_y(\cS_{i}/\cS_{i-1}))=1.
 \]
 By \eqref{eqn:relX}, we have
 \[
 \frac{Q_i}{1-Q_i} \det (\cS_{i+1}/\cS_{i}) \star \det (\cS_{i}/\cS_{i-1}) = Q_i(1-Q_{i-1})\frac{\det\cS_{i+1}}{\det \cS_{i-1}}.
 \]
 As in the proof of (c), this can be pushed forward and its pushforward is $\det(\cS_{i+1}/\cS_{i-1})$. Putting these together, we have established \eqref{eqn:nts}.
\end{proof}

Recall that $Y^{(j)}=\bigl(Y^{(j)}_1,\dots,
{ Y^{(j)}_{r_{j+1}-r_j}}\bigr)$, $0\leq j\leq k$, are formal variables, $e_\ell$ denotes the $\ell$-th elementary symmetric polynomial, and $T_1,\dots, T_n \in \K_T(\pt)$ are given by
the decomposition of~$\C^n$ into one dimensional $T$-modules, that is, $\wedge^\ell(\C^n)=e_\ell(T_1,\dots,T_n)$.

\begin{Theorem}\label{thm:main}
 The ring $\QK_T(\Fl(\bfr,n))$ is isomorphic to $R\dbb{Q}/J_Q$, where
 \[
 R=\K_T(\pt)\bigl[e_1\bigl(Y^{(j)}\bigr),\dots,e_{r_{j+1}-r_j}\bigl(Y^{(j)}\bigr),\, 0\leq j\leq k\bigr],
 \]
 and $J_Q\subset R[\![Q]\!]=R[\![Q_1,\dots,Q_k]\!]$
is the ideal generated by the coefficients of $y$ in
\begin{align}\label{eqn:whit-short-gen}
 \prod_{\ell=1}^{n}(1+yT_\ell)-
 &\begin{vmatrix}
 A_0 & B_1 & & & \\
 1 & A_1 & B_2 & & \\ & \ddots & \ddots & \ddots & \\ & & 1 & A_{k-1} & B_{k}\\ & & & 1 & A_{k}
 \end{vmatrix},
 \end{align}
 where
 \[
 A_j=\prod_{\ell=1}^{r_{j+1}-r_j}\bigl(1+y Y^{(j)}_\ell\bigr)+B_j,\qquad B_j=y^{{r_{j+1}-r_j}}\frac{Q_j}{1-Q_j}\prod_{\ell=1}^{r_{j+1}-r_j}Y^{(j)}_\ell,
 \]
 with the convention that $Q_0=0$.

 More precisely,
there exists a $\K_T(\pt)[\![Q]\!]$-algebra isomorphism
\[
\Psi\colon\ R[\![Q]\!]/J_Q \to \QK_T(\Fl(r_1,\dots,r_k))
\]
that sends \smash{$e_\ell\bigl(Y^{(j)}\bigr)$} to $\wedge^\ell\br{\cS_{j+1}/\cS_j}$ for $j=0,\dots,k$ and $\ell=1,\dots,r_{j+1}-r_{j}$.
\end{Theorem}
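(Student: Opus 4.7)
The plan is to construct the candidate map $\Psi$ directly, use Theorem~\ref{thm:whit-short} to verify that the ideal $J_Q$ sits inside its kernel, and then invoke the Nakayama-type result from~\cite{GMSXZZ:QKW,gu2022quantum} cited just before the theorem to promote the classical Borel-type presentation of $\K_T(\Fl(\bfr,n))$ to the quantum K statement.

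First I define a $\K_T(\pt)\dbb{Q}$-algebra homomorphism
\[
\widetilde{\Psi}\colon\ R\dbb{Q} \to \QK_T(\Fl(\bfr,n)), \qquad e_\ell\bigl(Y^{(j)}\bigr) \mapsto \wedge^\ell(\cS_{j+1}/\cS_j),
\]
which is well-defined since $R$ is a polynomial ring in the variables $e_\ell(Y^{(j)})$ over $\K_T(\pt)$. Because $\widetilde{\Psi}$ is a ring homomorphism into $\QK_T(\Fl(\bfr,n))$, ordinary products in $R\dbb{Q}$ land on $\star$-products in the target. Treating $Y^{(j)}_1,\dots,Y^{(j)}_{r_{j+1}-r_j}$ as formal K-theoretic Chern roots of $\cS_{j+1}/\cS_j$, one has
\[
\widetilde{\Psi}\biggl(\prod_{\ell}\bigl(1+yY^{(j)}_\ell\bigr)\biggr)=\lambda_y(\cS_{j+1}/\cS_j), \qquad \widetilde{\Psi}\biggl(\prod_{\ell}Y^{(j)}_\ell\biggr) = \det(\cS_{j+1}/\cS_j),
\]
and similarly $\widetilde{\Psi}\bigl(\prod_\ell(1+yT_\ell)\bigr)=\lambda_y(\C^n)$. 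Hence the image of the $A_j$ and $B_j$ in \eqref{eqn:whit-short-gen} is precisely the corresponding $A_j$ and $B_j$ of Theorem~\ref{thm:whit-short}, and the image of the polynomial determinant in \eqref{eqn:whit-short-gen} becomes the $\star$-determinant in \eqref{eqn:whit-short}. By Theorem~\ref{thm:whit-short}, this difference vanishes in $\QK_T(\Fl(\bfr,n))[y]$, so $J_Q\subset\ker\widetilde{\Psi}$ and $\widetilde{\Psi}$ descends to
\[
\Psi\colon\ R\dbb{Q}/J_Q \to \QK_T(\Fl(\bfr,n)).
\]

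To prove $\Psi$ is an isomorphism, I examine the classical limit $Q_1=\dots=Q_k=0$. Under this specialization all $B_j$ vanish, the matrix becomes triangular, and the defining relation reduces to
\[
\prod_{\ell=1}^{n}(1+yT_\ell) = \prod_{j=0}^{k}\prod_{\ell=1}^{r_{j+1}-r_j}\bigl(1+y Y^{(j)}_\ell\bigr),
\]
which upon equating coefficients of $y$ gives exactly the Whitney-type relations presenting the classical equivariant K-theory $\K_T(\Fl(\bfr,n))$ in terms of the classes $\wedge^\ell(\cS_{j+1}/\cS_j)$. Thus $\Psi \mod (Q)$ is the classical Borel isomorphism. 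The Nakayama-type result from~\cite{GMSXZZ:QKW,gu2022quantum} then lifts this to an isomorphism over $\K_T(\pt)\dbb{Q}$, provided the source is a finitely generated module over $\K_T(\pt)\dbb{Q}$ with the correct classical fiber rank --- a condition guaranteed by the triangular structure at $Q=0$.

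The most delicate step will be Step~2: checking that the image of the $R\dbb{Q}$-determinant under the ring homomorphism $\widetilde{\Psi}$ really does reproduce the $\star$-determinant of Theorem~\ref{thm:whit-short} entry-by-entry. This is largely bookkeeping, but must be performed with care because the determinantal expansion involves products of the $A_j,B_j$ --- these products must be interpreted with the quantum product on the target side. Once this matching is settled, Theorem~\ref{thm:whit-short} (which is where the real work of pushing forward Kato's map and applying Proposition~\ref{prop:MNS} was done) does all the heavy lifting, and the Nakayama-type argument provides the final upgrade to an isomorphism.
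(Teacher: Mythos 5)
Your proposal is correct and follows essentially the same route as the paper: well-definedness of $\Psi$ from Theorem~\ref{thm:whit-short}, then the Nakayama-type result of~\cite{GMSXZZ:Nakayama} applied to the $Q=0$ specialization, where the relations become the Borel-type relation $\lambda_y(\cS_1)\cdots\lambda_y(\C^n/\cS_k)=\lambda_y(\C^n)$ presenting $\K_T(\Fl(\bfr,n))$. The only point you gloss over, which the paper spells out via Lascoux and~\cite[Proposition~5.1]{GMSXZZ:Nakayama}, is that this single eliminated relation in the quotient-bundle variables alone is indeed a \emph{complete} set of classical relations.
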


\begin{proof}
It follows from Theorem~\ref{thm:whit-short} that $\Psi$ is a well-defined ring homomorphism.
To prove there are no other relations, we use~\cite[Theorem~4.1]{GMSXZZ:Nakayama}, which
states that a complete set of relations in the quantum (equivariant) K ring is
obtained by quantizing any complete set of relations in the ordinary (equivariant) K ring.
Therefore, we need to show that when one specializes each~$Q_i$ to~$0$, the resulting
ring is a presentation of $\K_T(\Fl(r_1,\dots,r_k))$.
The relations obtained this way
are the `Borel-type relations' of the $\lambda_y$ classes
\begin{equation}\label{E:pToda} \lambda_y(\cS_1) \cdot \lambda_y(\cS_2/\cS_1)\cdot \dots \cdot \lambda_y(\C^n/\cS_k) = \lambda_y(\C^n) . \end{equation}
Note that the relations~\eqref{E:pToda} can be obtained from the Whitney relations
\[ \lambda_y(\cS_i) \cdot \lambda(\cS_{i+1}/\cS_i) = \lambda_y(\cS_{i+1}) , \]
by eliminating the classes $\lambda_y(\cS_i)$ for $2 \le i \le k$.
(The quantization of this statement is done in the next section.)
Finally, it is known that
the Whitney relations form a full set of relations
in $\K_T(\Fl(r_1,\dots,r_k))$. This is essentially done by Lascoux~\cite[Section~7]{lascoux:anneau},
and we refer to~\cite[Proposition~5.1]{GMSXZZ:Nakayama} for a complete proof.
\end{proof}

We illustrate the proof of Theorem~\ref{thm:whit-short} with the following example.
\begin{Example}
 Let $\Fl(4)$ $\to \Gr(2,4)=\Fl(2;4)$ be the projection. In $\QK_T(\Fl(4))$, we have the following relation
 \begin{gather}\label{eqn:fl4}
 \lambda_y\bigl(\mathbb{C}^4\bigr)=\begin{vmatrix}
 A_0 & B_1 & 0 &0 \\
 1 & A_1 &B_2 &0 \\
 0 & 1 & A_2 & B_3 \\
 0 & 0 & 1 & A_3
 \end{vmatrix}^\star,
\end{gather}
where
\begin{gather*}
A_0 = \lambda_y(\mathcal{S}_1),\qquad B_1 = y\frac{Q_1}{1-Q_1}\det(\mathcal{S}_2/{\mathcal{S}_1}),
\\
A_1= \lambda_y(\mathcal{S}_2/{\mathcal{S}_1})+y\frac{Q_1}{1-Q_1}\det(\mathcal{S}_2/{\mathcal{S}_1}) ,\qquad B_2 = y\frac{Q_2}{1-Q_2}\det(\mathcal{S}_3/{\mathcal{S}_2}),
\\
A_2 = \lambda_y(\mathcal{S}_3/{\mathcal{S}_2}) + y\frac{Q_2}{1-Q_2}\det(\mathcal{S}_3/{\mathcal{S}_2}) ,\qquad B_3 = y\frac{Q_3}{1-Q_3}\det\bigl(\mathbb{C}^4/{\mathcal{S}_3}\bigr),
\\
A_3 = \lambda_y\bigl(\mathbb{C}^4/{\mathcal{S}_3}\bigr) + y\frac{Q_3}{1-Q_3}\det\bigl(\mathbb{C}^4/{\mathcal{S}_3}\bigr).
\end{gather*}
We push this relation forward to $\Gr(2,4)$ by pushing it forward to $\Fl(2,3;4)$ and then pushing forward from $\Fl(2,3;4)$ to $\Gr(2,4)$.
Let $\pi\colon \Fl(4) \to \Fl(2,3;4)$ be the projection. The relation on $\Fl(2,3;4)$ is given by
\begin{gather}\label{eqn:234}
 \lambda_y\bigl(\mathbb{C}^4\bigr)=\begin{vmatrix}
 A_0' & B_2 & 0 \\
 1 & A_2 &B_3 \\
 0 & 1 & A_3 \\
 \end{vmatrix}^\star,
\end{gather}
where
\[
A_0' = \lambda_y(\mathcal{S}_2).
\]
By expanding the determinant in~\eqref{eqn:fl4} along the columns containing $A_0$ and $A_1$, we obtain
\begin{gather}\label{eqn:tobepushed}
 \lambda_y\bigl(\mathbb{C}^4\bigr)=\begin{vmatrix}
 A_0 & B_1 \\
 1 & A_1 \\
 \end{vmatrix}^\star \begin{vmatrix}
 A_2 & B_3 \\
 1 & A_3 \\
 \end{vmatrix}^\star -A_0 \begin{vmatrix}
 B_2 & 0 \\
 1 & A_3 \\
 \end{vmatrix}^\star.
\end{gather}
By Lemma~\ref{lemma:main-push},
\[
\pi_*\left|\begin{matrix}
 A_0 & B_1 \\
 1 & A_1 \\
 \end{matrix}\right|^\star = A_0' , \qquad \pi_*A_0=1
 \]
and \smash{$\bigl|\begin{smallmatrix}
 A_2 & B_3 \\
 1 & A_3 \\
 \end{smallmatrix}\bigr|^\star $}, \smash{$\bigl|\begin{smallmatrix}
 B_2 & 0 \\
 1 & A_3 \\
 \end{smallmatrix}\bigr|^\star $}
 will not change under pushforward by $\pi$. Thus, by pushing forward~\eqref{eqn:tobepushed} we obtain
 \begin{gather*}
 \lambda_y\bigl(\mathbb{C}^4\bigr)=A_0' \begin{vmatrix}
 A_2 & B_3 \\
 1 & A_3 \\
 \end{vmatrix}^\star - \begin{vmatrix}
 B_2 & 0 \\
 1 & A_3 \\
 \end{vmatrix}^\star
\end{gather*}
which is the expansion of (2) along the first column. So the relation in $\QK_T(\Fl(4))$ pushes forward to the relation in $\QK_T(\Fl(2,3;4))$.

Now let $p\colon \Fl(2,3;4) \to \Gr(2,4)$ be the projection. In $\Gr(2,4)$ we have the following relation:
\begin{gather}\label{eqn:gr24}
 \lambda_y\bigl(\mathbb{C}^4\bigr)=\begin{vmatrix}
 A_0' & B_1'' \\
 1 & A_1'' \\
 \end{vmatrix}^\star ,
\end{gather}
where
\[
B_1'' = y^2\frac{Q_2}{1-Q_2}\det\bigl(\mathbb{C}^4/{\mathcal{S}_2}\bigr) ,\qquad A_1'' = \lambda_y(\mathbb{C}^4/\mathcal{S}_2)+ y^2\frac{Q_2}{1-Q_2}\det\bigl(\mathbb{C}^4/{\mathcal{S}_2}\bigr).
\]
By Lemma~\ref{lemma:main-push}, in~\eqref{eqn:234}, we have \smash{$p_*\bigl|\begin{smallmatrix}
 A_2 & B_3 \\
 1 & A_3 \\
 \end{smallmatrix}\bigr|^\star = A_1''$}, \smash{$p_*
 \bigl|\begin{smallmatrix}
 B_2 & 0 \\
 1 & A_3 \\
 \end{smallmatrix}\bigr|^\star=B_1'' $} and $A_0'$ will not change under the pushforward. Thus,~\eqref{eqn:234} pushes forward to~\eqref{eqn:gr24}.
\end{Example}

\section{Whitney implies Toda}\label{sec:whitney}
In this section, we consider a different presentation of the quantum K ring, named the
{\em quantum~K Whitney presentation}.
This presentation quantizes
 relations $\lambda_y(\cS_i) \cdot \lambda_y(\cS_{i+1}/\cS_i) = \lambda_y(\cS_{i+1})$
 satisfied by the tautological subbundles in $\K_T(\Fl(\bfr,n))$. Informally, the
 Whitney presentation contains more (geometric) information than the Toda presentation,
 as it involves more generators, corresponding to the $\lambda_y$
 classes of the tautological subbundles, and their quotients. In~contrast, the Toda presentation
 only involves the quotient bundles.

The quantization was conjectured in~\cite{gu2023quantum,GMSXZZ:QKW},
generalizing the conjectures from
\cite{Gu:2020zpg} for Grassmannians. These conjectures have been proved in~\cite{gu2022quantum}
for Grassmannians, and in~\cite{GMSXZZ:QKW} for~$\Fl(1,n-1;n)$ case. The general case was recently announced in~\cite{huq2024quantum} using the abelian/non-abelian correspondence.
We note that the results in~\cite{huq2024quantum} are logically independent on those from~\cite{maeno2023presentation}, which were used to obtain the Toda presentation in the previous section.

Our main result of this section is that eliminating the additional variables of the Whitney presentation yields the Toda presentation. As an aside, we note that the proof of Theorem~\ref{thm:main} can be easily modified to show that the quantum K Whitney presentation of $\Fl(\bfr,n)$ follows from that of $\Fl(n)$.
We leave the details of this proof to the reader.

In what follows, $T$ can be a maximal torus in $\GL_n$.
Let
\[
 {X}^{(j)}=\bigl(X^{(j)}_1,\dots,X^{(j)}_{r_j}\bigr) \qquad\text{and}\qquad {Y}^{(j)}=\bigl(Y^{(j)}_1,\dots,
Y^{(j)}_{r_{j+1}-r_j}\bigr)
\]
denote formal variables for $j=1,\dots,k$ and denote by $X^{(k+1)}\coloneqq (T_1,\dots,T_n)$ the equivariant parameters in $\K_T(\pt)$. Let \smash{$e_\ell\bigl({X}^{(j)}\bigr)$} and \smash{$e_\ell\bigl({Y}^{(j)}\bigr)$} be the $\ell$-th elementary symmetric polynomials in ${X}^{(j)}$ and ${Y}^{(j)}$, respectively. Define the ring
 \[
 S=\K_T(\pt)\bigl[e_1\bigl(X^{(j)}\bigr),\dots, e_{r_j}\bigl(X^{(j)}\bigr),e_1\bigl(Y^{(j)}\bigr),\dots,e_{r_{j+1}-r_j}\bigl(Y^{(j)}\bigr)\bigr]_{j=1}^k ,
 \]
 and the ideal $I_Q\subset S[\![Q]\!]=S[\![Q_1,\dots,Q_k]\!]$
generated by the coefficients of $y$ in
\begin{gather}\label{eqn:qkrel}
 \prod_{\ell=1}^{r_j}\bigl(1+y X^{(j)}_\ell\bigr)\prod_{\ell=1}^{r_{j+1}-r_j}\bigl(1+y Y^{(j)}_\ell\bigr)-\prod_{\ell=1}^{r_{j+1}}\bigl(1+y X^{(j+1)}_\ell\bigr)\\
 \quad{}+y^{{r_{j+1}-r_j}}\frac{Q_j}{1-Q_j}\prod_{\ell=1}^{r_{j+1}-r_j}Y^{(j)}_\ell\Biggl(\prod_{\ell=1}^{r_j}\bigl(1+yX^{(j)}_\ell\bigr) -\prod_{\ell=1}^{r_{j-1}}\bigl(1+yX^{(j-1)}_\ell\bigr)\Biggr), \qquad j=1,\dots, k. \nonumber
\end{gather}
It was conjectured in~\cite{gu2023quantum,GMSXZZ:QKW} and proved in
\cite{huq2024quantum} that
there is an isomorphism of $\K_T(\pt)[\![Q]\!]$-algebras
 \begin{equation}\label{E:QKWhitney}
 \Phi\colon\ {S[\![Q]\!]}/I_Q\to \QK_T\br{\Fl(\bfr,n)}
 \end{equation}
 sending
 \[ e_\ell\bigl(X^{(j)}\bigr) \mapsto \wedge^\ell(\cS_{j}) \qquad \textrm{and} \qquad e_\ell\bigl(Y^{(j)}\bigr) \mapsto \wedge^\ell(\cS_{{j+1}}/\cS_{j}) . \]
 We refer to this as the (quantum K) {\em Whitney presentation}.

 \begin{Proposition}\label{prop:whit-toda}
 There is a natural isomorphism
 \[ S[\![Q]\!]/I_Q \simeq R\dbb{Q}/J_Q ,\]
 obtained by eliminating the indeterminates \smash{$X^{(j)}_\ell$}. In particular,
 the Whitney relations from~\eqref{eqn:qkrel} imply the
 Toda relations from~\eqref{eqn:whit-short-gen}.
 \end{Proposition}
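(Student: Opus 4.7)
The plan is to recognize the Whitney relations as the three-term recursion from Lemma~\ref{L:r-det}. For $1 \le j \le k$, set $\Lambda_j(y) := \prod_{\ell=1}^{r_j}(1+yX^{(j)}_\ell)$, and extend this notation by $\Lambda_0(y) := 1$ and $\Lambda_{k+1}(y) := \prod_{\ell=1}^n(1+yT_\ell)$ (reflecting $r_0 = 0$ and $X^{(k+1)} = (T_1,\dots,T_n)$). A direct rearrangement of the $j$-th Whitney relation in~\eqref{eqn:qkrel} yields
\[
\Lambda_{j+1}(y) = A_j\, \Lambda_j(y) - B_j\, \Lambda_{j-1}(y), \qquad 1 \le j \le k,
\]
with $A_j$ and $B_j$ exactly as in the Toda presentation~\eqref{eqn:whit-short-gen}, provided one identifies Whitney's $e_\ell(X^{(1)})$ with Toda's $e_\ell(Y^{(0)})$ (both represent $\wedge^\ell \cS_1$). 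With the conventions $Q_0=0$ and $\Lambda_{-1}=0$, the recursion extends vacuously to $j=0$, reducing to $\Lambda_1 = A_0 = \prod_\ell(1+yY^{(0)}_\ell)$, consistent with that identification.

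Applying Lemma~\ref{L:r-det} with $U_j = \Lambda_j$, the recursion implies, modulo $I_Q$,
\[
\Lambda_{k+1}(y) =
\begin{vmatrix}
A_0 & B_1 & & \\
1 & A_1 & \ddots & \\
& \ddots & \ddots & B_k \\
& & 1 & A_k
\end{vmatrix}.
\]
Since $\Lambda_{k+1}(y) = \prod_\ell(1+yT_\ell)$ by construction, this is precisely the Toda relation~\eqref{eqn:whit-short-gen}. Hence the map sending Toda's $e_\ell(Y^{(0)}) \mapsto e_\ell(X^{(1)})$ and $e_\ell(Y^{(j)}) \mapsto e_\ell(Y^{(j)})$ for $1 \le j \le k$ descends to a $\K_T(\pt)\dbb{Q}$-algebra homomorphism $R\dbb{Q}/J_Q \to S\dbb{Q}/I_Q$.

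For the inverse, I would define $\Phi\colon S\dbb{Q}/I_Q \to R\dbb{Q}/J_Q$ by $e_\ell(Y^{(j)}) \mapsto e_\ell(Y^{(j)})$, $e_\ell(X^{(1)}) \mapsto e_\ell(Y^{(0)})$, and for $2 \le j \le k$ sending $e_\ell(X^{(j)})$ to the polynomial in Toda generators prescribed by iterating the recursion. The Whitney relations for $j=1,\dots,k-1$ then hold tautologically (they are how the images of $\Lambda_j$ were defined), while the $j=k$ Whitney relation is precisely the Toda relation and vanishes modulo $J_Q$; thus $\Phi$ is well defined. The two maps are mutually inverse: on the $Y^{(j)}$ generators this is immediate, and on the $X^{(j)}$ generators the composition $S \to R \to S$ replaces $X^{(j)}$ by its recursive expression, which equals $X^{(j)}$ modulo $I_Q$ since the Whitney relations assert exactly this. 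The only delicate point is reconciling the indexing conventions (Toda indexes quotient bundles by $0 \le j \le k$, Whitney by $1 \le j \le k$) and checking that the boundary values $Q_0=0$, $\Lambda_0=1$, $\Lambda_{k+1} = \prod_\ell(1+yT_\ell)$ align; no substantive obstacle arises beyond this bookkeeping.
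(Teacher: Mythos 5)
Your proposal is correct and follows essentially the same route as the paper: rewrite the $j$-th Whitney relation as the three-term recursion $\Lambda_{j+1}=A_j\Lambda_j-B_j\Lambda_{j-1}$, invoke Lemma~\ref{L:r-det}, and eliminate the $X^{(j)}$ for $2\le j\le k$. Your explicit handling of the identification $e_\ell\bigl(X^{(1)}\bigr)\leftrightarrow e_\ell\bigl(Y^{(0)}\bigr)$ and of the mutual inverseness of the two maps only spells out details the paper leaves implicit.
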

 \begin{proof}
 Let
 \begin{align*}
 A_j=\prod_{\ell=1}^{r_{j+1}-r_j}\bigl(1+y Y^{(j)}_\ell\bigr)+B_j,\qquad B_j=y^{{r_{j+1}-r_j}}\frac{Q_j}{1-Q_j}\prod_{\ell=1}^{r_{j+1}-r_j}Y^{(j)}_\ell,
 \end{align*}
 so that~\eqref{eqn:qkrel} becomes
 \begin{align}\label{eqn:recur}
 A_j\prod_{\ell=1}^{r_j}\bigl(1+yX^{(j)}_\ell\bigr) -B_j\prod_{\ell=1}^{r_{j-1}}\bigl(1+yX^{(j-1)}_\ell\bigr) -\prod_{\ell=1}^{r_{j+1}}\bigl(1+y X^{(j+1)}_\ell\bigr).
 \end{align}
 Note that by Lemma~\ref{L:r-det}, relations given by~\eqref{eqn:recur} are equivalent to those given by
 \begin{align*}
 \prod_{\ell=1}^{r_{j+1}}\bigl(1+y X^{(j+1)}_\ell\bigr)-
 &\begin{vmatrix}
 A_0 & B_1 & & & \\
 1 & A_1 & B_2 & & \\ & \ddots & \ddots & \ddots & \\ & & 1 & A_{j-1} & B_{j}\\ & & & 1 & A_{j}
 \end{vmatrix}\qquad \text{for $1\leq j\leq k$}.
 \end{align*}
 As a consequence, we can eliminate $e_1\bigl(X^{(j)}\bigr),\dots, e_{r_j}\bigl(X^{(j)}\bigr)$ for $2\leq j\leq k$, and be left with the relation~\eqref{eqn:whit-short-gen}.
 \end{proof}
\begin{Remark}\label{rmk:QKW} We note that our methods from the previous section can be adapted easily to show that $\Phi$ is an isomorphism for all partial flag varieties if and only if it is an isomorphism for $\Fl(n)$. \end{Remark}

 We illustrate Proposition~\ref{prop:whit-toda} with the following two examples.

 \begin{Example} Consider $\Fl(2)= \bP^1$ with the tautological subbundle
 $\cS_1 \subset \C^2$. The QK Whitney relations are given by
 \[
 \lambda_y(\cS_1) \star \lambda_y\bigl(\mathbb{C}^2/\cS_1\bigr)= \lambda_y\bigl(\mathbb{C}^2\bigr)-y\frac{Q}{1-Q}\bigl(\mathbb{C}^2/\cS_1\bigr) \star (\lambda_y(\cS_1)-1)
 \]
 After making the change of variables
 $\mathcal{S}_1 \mapsto P_1$
 and $\mathbb{C}^2/\cS_1 \mapsto (1-Q)P_2/{P_1}$, then
 collecting the coefficients of $y$ and $y^2$, one obtains
 the Toda relations for $\QK_T\bigl(\mathbb{P}^1\bigr)$:
 \[
 P_1 +\frac{1-Q}{P_1} = \mathbb{C}^2, \qquad P_2 = \wedge^2\mathbb{C}^2 .
 \]
 \end{Example}
 \begin{Example} We now consider the case $X=\Fl(3)$, equipped with the
 tautological sequence $\cS_1 \subset \cS_2 \subset \C^3$.
 There are two QK Whitney relations
 \begin{gather*}
 \lambda_y(\mathcal{S}_1) \star \lambda_y(\mathcal{S}_2/\mathcal{S}_1) = \lambda_y(\mathcal{S}_2) - y\frac{Q_1}{1-Q_1} \mathcal{S}_2/\mathcal{S}_1 \star (\lambda_y(\mathcal{S}_1)-1),
 \\
 \lambda_y(\mathcal{S}_2) \star \lambda_y\bigl(\mathbb{C}^3/\mathcal{S}_2\bigr) = \lambda_y\bigl(\mathbb{C}^3\bigr)-y\frac{Q_2}{1-Q_2}\mathbb{C}^3/{\mathcal{S}_2} \star (\lambda_y(\mathcal{S}_2)-\lambda_y(\mathcal{S}_1)) .
 \end{gather*}
 From the first relation, we can write
 \[\lambda_y(\mathcal{S}_2) = \lambda_y(\mathcal{S}_1) \star \lambda_y(\mathcal{S}_2/\mathcal{S}_1) + y\frac{Q_1}{1-Q_1} \mathcal{S}_2/\mathcal{S}_1 \star (\lambda_y(\mathcal{S}_1)-1),
 \]
 which we can use to replace $\lambda_y(\mathcal{S}_2)$ in the second relation. By some algebra, we obtain
 \begin{gather*}
 (1+y\mathcal{S}_1)\star(1+y\mathcal{S}_2/\mathcal{S}_1)\star\bigl(1+y\mathbb{C}^3/\mathcal{S}_2\bigr)+y^2\frac{Q_1}{1-Q_1}\mathcal{S}_2/\mathcal{S}_1 \star \mathcal{S}_1 \star \bigl(1+y\mathbb{C}^3/\mathcal{S}_2\bigr)\\
 \qquad{}=\lambda_y\bigl(\C^3\bigr)-y\frac{Q_2}{1-Q_2}\mathbb{C}^3/\mathcal{S}_2 \star (1+y\mathcal{S}_1)\star(1+y\mathcal{S}_2/\mathcal{S}_1)\\
 \qquad\quad{}- y^3\frac{Q_1Q_2}{(1-Q_1)(1-Q_2)}\mathcal{S}_1 \star \mathcal{S}_2/\mathcal{S}_1 \star \mathbb{C}^3/\mathcal{S}_2 + y\frac{Q_2}{1-Q_2}\mathbb{C}^3/\mathcal{S}_2\star(1+y\mathcal{S}_1).
 \end{gather*}
 With the change of variables
 \[ \mathcal{S}_1 \mapsto P_1, \qquad \mathcal{S}_2/\mathcal{S}_1 \mapsto (1-Q_1) P_2/P_1 , \qquad \mathbb{C}^3/\mathcal{S}_2 \mapsto ({1-Q_2})P_3/P_2 \]
 and equating the coefficients of $y$, $y^2$, $y^3$ in the two sides to obtain
 \begin{itemize}\itemsep=0pt
 \item coefficient of $y$: $P_1 + (1 - Q_1) P_2/P_1 + (1 - Q_2) P_3/P_2 =\C^3$;
 \item coefficient of $y^2$: $P_2 + (1-Q_1)P_3/P_1 + (1-Q_2) P_1P_3/P_2 =\wedge^2 \C^3$;
 \item coefficient of $y^3$: $P_3= \wedge^3 \C^3$.
 \end{itemize}
 These are the Toda relations for $\QK_T(\Fl(3))$, calculated from~\eqref{eqn:toda}.
 \end{Example}

\section[Representatives for quantum K Schubert classes in partial flag varieties]{Representatives for quantum K Schubert classes\\ in partial flag varieties}

The goal of this section is to use the pushforward technique to
obtain polynomial representatives of Schubert classes in the equivariant quantum K rings of
partial flag varieties. Our strategy is to push forward
the polynomials for the class of the point from $\QK_T(\Fl(n))$ to $\QK_T(\Fl(\bfr,n))$, then use the
(left) divided difference operators defined in~\cite{Mihalcea2022Left}
in the rings $\QK_T(\Fl(\bfr,n))$
to deduce a recursive procedure giving
the other polynomials. The left divided difference operators were
also used by Maeno, Naito, and Sagaki
\cite{maeno2023presentation} to prove that the
quantum double Grothendieck polynomials represent Schubert classes in the Toda
presentation of $\QK_T(\Fl(n))$.

We use a different generating set from loc.\ cit.,
the exterior powers of the tautological bundles, thus our representatives
live in the (quantum) Whitney presentation introduced in Section~\ref{sec:whitney}.
A~key feature of our polynomials, and unlike those from~\cite{maeno2023presentation},
is that they {\em do not} involve quantum parameters.

\subsection[Preliminaries on Schubert classes and quantum divided difference operators]{Preliminaries on Schubert classes\\ and quantum divided difference operators}
We start with recalling some basic facts about the Schubert classes and
quantum divided difference operators in the equivariant quantum K theory.

We need the formula for the class of the Schubert point, proved
in~\cite{maeno2023presentation}, which we later use
to find formulae for the other Schubert classes. To this aim, we briefly recall
the definition of the Schubert basis in the quantum K rings.

Regard $\Fl(n)$ as $\SL_n/B$, and let $W:=N_{\SL_n}T/T \simeq S_n$ be the Weyl group,
equipped with the length function $\ell\colon W \to \mathbb{N}$. It is a Coxeter group,
generated by simple reflections $s_i=(i,i+1)$ for $1 \le i \le n-1$.
Denote by $w_0 \in W$ be the longest element, so that $\dim \Fl(n) = \ell(w_0)$.
Let~${W_{\bfr} \le W}$ be the subgroup generated by the simple
reflections $s_i$ so that $i$ is not among the components of $\bfr$,
and let $W^{\bfr} \subset W$ be the set of minimal length representatives
for the cosets of~$W/W^{\bfr}$.

Set $B^- = w_0 B w_0 \subset \SL_n$,
the opposite Borel subgroup. For each $w \in W$, the flag
variety $\Fl(n)$ has a $T$-fixed point $e_w:= n_wB$, where
$n_w \in N_{\SL_n}T/T$ is any representative of $w$. The
(opposite) Schubert cell is
$X^{w,\circ}:= B^-.n_wB \subset \Fl(n)$, and it is
isomorphic to the affine space $\mathbb{A}^{\dim \Fl(n) - \ell(w)}$.
One can similarly define Schubert cells in any partial flag variety $\Fl(\bfr,n)$;
alternatively, the Schubert cells in $\Fl(\bfr,n)$ are the images of
the Schubert cells in $\Fl(n)$ under the ($\SL_n$-equivariant)
natural projection $\Fl(n) \to \Fl(\bfr,n)$.~The Schubert variety
$X^w$ is the (Zariski) closure of the corresponding Schubert cell.
Inclusion of Schubert varieties give the Bruhat (partial) order on the set $W^{\bfr}$,
\[ u W^{\bfr} \le v W^{\bfr} \Leftrightarrow X^u \supset X^v \textrm{ in } \Fl(\bfr,n) . \]

Now let
$\cO^w \in \K_T(\Fl(\bfr,n))$ be the K theory class given by the structure sheaf
of $X^w$. The Schubert cells give a stratification of $\Fl(n)$, and, more generally, of $\Fl(\bfr,n)$.
Then the classes $\cO^w$ form a basis for $\K_T(\Fl(\bfr,n))$ over $\K_T(\pt)$,
when $w$ varies in the quotient $W/W_{\bfr}$.
This implies (by definition) that the classes $\cO^w$ are a basis of
$\QK_T(\Fl(\bfr,n))$, over the ground ring $\K_T(\pt)[\![Q]\!]$.

As in~\cite{maeno2023presentation}, we identify $\K_T(\pt)$
with the group algebra $\Z[P]=\oplus_{\chi\in P}\Z {\rm e}^\chi$ of the
weight lattice \smash{$P=\sum_{i=1}^{n-1}\Z \varpi_i$} of $\SL_n$,
where $\varpi_i, 1\leq i\leq n-1$ are the fundamental weights. We also
set $\varpi_0=\varpi_n=0$, and $\epsilon_j=\varpi_j-\varpi_{j-1}$ for $1\leq j\leq n$.

In~\cite{Mihalcea2022Left}, left divided difference operators acting
on $\QK_T(\Fl(\bfr,n))$ (in fact on the equivariant quantum K ring of any
homogeneous space $G/P$) were constructed. These operators send Schubert classes to Schubert classes, and
were compatible with the quantum K product. We recall next the salient facts, see
Section~8.3 in loc.\ cit.\ for further details.

Regard $\Fl(\bfr,n)$ as $\SL_n/P_\bfr$, where $P_{\bfr}$ the parabolic group stabilizing the identity
partial flag. Left multiplication by a representative $n_w$ of an element $w\in W$ induces an automorphism of $\Fl(\bfr,n)$ which is equivariant with respect to the automorphism of $T$ given by $t\mapsto n_w t n_w^{-1}$. Pulling back along this automorphism of $\Fl(\bfr,n)$ gives a ring automorphism $w^L$ of $\K_T(\Fl(\bfr,n))$.
The following combines~\cite[Proposition~5.3, Lemma 5.4, and Proposition~5.5]{Mihalcea2022Left}.

\begin{Proposition}[Mihalcea--Naruse--Su]
The following hold:
\begin{enumerate}\itemsep=0pt
 \item[$1.$] $w^L({\rm e}^\chi a) = {\rm e}^{w(\chi)} w^L(a)$ for any ${\rm e}^\chi\in \K_T(\pt)$ and $a\in \K_T(\Fl(\bfr,n))$.
 \item[$2.$] $w^L$ is $\K_{\SL_n}(\Fl(\bfr,n))$-linear: for
$\kappa \in \K_{\SL_n}(\Fl(\bfr,n))$ and $a \in \K_T(\Fl(\bfr,n))$,
\[ w^L(\kappa \cdot a) = \kappa\cdot w^L(a) . \]
 \item[$3.$] $w^L$ commutes with the natural projection $\pi\colon \Fl(n) \to \Fl(\bfr,n)$:
\[ w^L(\pi_*(a)) = \pi_*\bigl(w^L(a)\bigr) , \qquad \forall a \in \K_T(\Fl(n)) . \]
In particular, the map $w^L$ on $\K_T(\Fl(\bfr,n))$ is determined by the map on $\K_T(\Fl(n))$.
 \item[$4.$] The automorphisms $w^L$ give an action of $W$ on $\K_T(\Fl(\bfr,n))$.
If $s_i \in W$ is a simple reflection, and $\cO^w \in \K_T(\Fl(\bfr,n)$, then
\begin{equation*}
 s_i^L (\cO^w)= \begin{cases} {\rm e}^{\alpha_i}\cO^w+(1-{\rm e}^{\alpha_i})\cO^{s_i w} & \textrm{if } s_i w W_{\bfr} < w W_{\bfr}, \\ \cO^w & \textrm{otherwise} , \end{cases} \end{equation*}
where $\alpha_i$ is the simple positive root giving $s_i$.
 \end{enumerate}
\end{Proposition}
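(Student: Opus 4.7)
The plan is to derive each of the four properties from the geometric description of $w^L$ as pullback along the map $L_{n_w}\colon \Fl(\bfr,n) \to \Fl(\bfr,n)$ given by left multiplication by $n_w$ on $\SL_n/P_\bfr$. First I would set up: $L_{n_w}$ is an automorphism of varieties, equivariant with respect to the action of $T$ on the source via the twist $t \mapsto n_w^{-1} t n_w$, so that its pullback is a well-defined ring automorphism $w^L$ of $\K_T(\Fl(\bfr,n))$. Property (1) is then essentially the statement that weights transform via $w$ under this twist: the equivariant line bundle on $\pt$ with character $\chi$ pulls back to the one with character $w(\chi)$. Multiplicativity of pullback upgrades this to the general $a \in \K_T(\Fl(\bfr,n))$.

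For property (2), I would use that $L_{n_w}$ is itself an element of the left $\SL_n$-action on $\Fl(\bfr,n)$; consequently, for any $\SL_n$-equivariant class $\kappa$, its pullback is canonically isomorphic to itself as an $\SL_n$-equivariant class, hence $w^L(\kappa) = \kappa$ in $\K_T(\Fl(\bfr,n))$. Combined with multiplicativity, this establishes $\K_{\SL_n}(\Fl(\bfr,n))$-linearity. For property (3), the projection $\pi\colon \Fl(n) \to \Fl(\bfr,n)$ is $\SL_n$-equivariant, hence intertwines left multiplication by $n_w$ on source and target, and proper base change gives $w^L \circ \pi_* = \pi_* \circ w^L$.

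Property (4) splits into two parts. For the $W$-action claim, $L_{n_w} \circ L_{n_v} = L_{n_w n_v}$ as maps of varieties, and although $n_w n_v$ and $n_{wv}$ may differ by a $T$-representative, the induced automorphism on $T$-equivariant $K$-theory depends only on the Weyl group element. The formula for $s_i^L(\cO^w)$ is the substantive point. I would prove it by fixed-point localization: $L_{n_{s_i}}$ permutes the $T$-fixed points of $\Fl(\bfr,n)$ by $e_u \mapsto e_{s_i u}$, so the restriction of $s_i^L(\cO^w)$ to $e_u$ equals the restriction of $\cO^w$ to $e_{s_i u}$, twisted by the character action of (1). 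One then compares with the known localization values of $\cO^w$ and $\cO^{s_iw}$ in the two cases depending on whether $s_i w W_\bfr < w W_\bfr$. Alternatively, by property (3), it suffices to verify the formula on $\Fl(n)$, where it reduces to a classical computation with left Demazure operators on Bruhat cells, and then descend to $\Fl(\bfr,n)$ via $\pi_*$.

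The main obstacle will be property (4): correctly handling the combinatorial condition $s_i w W_\bfr < w W_\bfr$, including the case when $s_i w$ is not the minimal-length representative of its coset, and pinning down the exact character ${\rm e}^{\alpha_i}$ (rather than, say, ${\rm e}^{-\alpha_i}$) in the formula, will require careful bookkeeping of tangent weights at the fixed points under the twisted $T$-action.
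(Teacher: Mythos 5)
The paper does not prove this proposition: it is imported wholesale from \cite[Proposition~5.3, Lemma~5.4, Proposition~5.5]{Mihalcea2022Left}, so there is no internal argument to compare yours against. Judged on its own terms, your outline for parts (1)--(3) is sound and is essentially the standard argument: $w^L$ is pullback along left translation $L_{n_w}$, which is equivariant for the conjugation twist of $T$; this gives the semilinearity over $\K_T(\pt)$ in (1), the triviality on $\K_{\SL_n}$-classes in (2) (an $\SL_n$-equivariant sheaf is canonically isomorphic to its translate), and (3) follows since $\pi$ is $\SL_n$-equivariant and the relevant square is Cartesian with horizontal isomorphisms. The independence of $w^L$ from the choice of representative $n_w$ (needed for the $W$-action claim in (4)) also goes through as you say, using connectedness of $T$.

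The genuine gap is in part (4), which is the only substantive assertion. For the ``otherwise'' case there is a clean argument you do not quite reach: when $s_iwW_\bfr \not< wW_\bfr$ the Schubert variety $X^w$ is stable under the minimal parabolic $P_i^-=B^-\cup B^-s_iB^-$ containing $n_{s_i}$, so $\cO^w$ is $P_i^-$-equivariant and your own argument for (2) gives $s_i^L(\cO^w)=\cO^w$ directly. For the case $s_iwW_\bfr<wW_\bfr$, your plan reduces to ``compare with the known localization values of $\cO^w$ and $\cO^{s_iw}$,'' but those restrictions (specializations of double Grothendieck polynomials at arbitrary fixed points) are exactly the hard data, and verifying the identity $s_i\bigl(\cO^w|_{e_{s_iu}}\bigr)={\rm e}^{\alpha_i}\cO^w|_{e_u}+(1-{\rm e}^{\alpha_i})\cO^{s_iw}|_{e_u}$ for all $u$ is the entire content of the statement; as written, the proposal defers rather than executes it. The more tractable route (and the one in the cited source) is to express $s_i^L$ in terms of a left Demazure-type operator, or to use the sweep $P_i^-\cdot X^w=X^{s_iw}$ together with a short exact sequence of structure sheaves, and then descend to $\Fl(\bfr,n)$ via (3) --- where, as you correctly note, one must separately check the degenerate case $s_iwW_\bfr=wW_\bfr$, in which the full-flag formula pushes forward to $\cO^{wW_\bfr}$ consistently with the ``otherwise'' branch. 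So the architecture is right, but the key computation in (4) is missing.
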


The equivariant quantum K theory is functorial for isomorphisms. Thus
one may extend the action of $W$ to an action on $\QK_T(\Fl(\bfr,n))$
by $\mathbb{Q}[\![Q]\!]$-linear ring automorphisms.
Define the (quantum) left divided difference operators by
\begin{equation*}
 \delta_i := \frac{1}{1- {\rm e}^{-\alpha_i}}\bigl(\id - {\rm e}^{-\alpha_i} s_i^L\bigr) .\end{equation*}
\big(In~\cite[equation~(13)]{Mihalcea2022Left} this operator is denoted by $\delta_i^\vee$.\big)
These operators have the same properties as the ordinary Demazure operators,
and they satisfy a Leibniz rule compatible with the quantum~K product.
For reader's convenience, we state these properties next,
see~\cite[Proposition~8.3]{Mihalcea2022Left}.
\begin{Proposition}[Mihalcea--Naruse--Su]\label{prop:QKDem} \quad
 \begin{enumerate}\itemsep=0pt
 \item[$1.$] The quantum operators $\delta_i$ are $\Q[q]$-linear, satisfy the braid relations, and $(\delta_i)^2= \delta_i$.
 \item[$2.$] For each $w \in W^\bfr$,
 \[
 \delta_i \bigl(\cO^{wW_\bfr}\bigr)= \begin{cases} \cO^{s_i w W_\bfr} & \textrm{if } s_i w < w, \\ \cO^{wW_\bfr} & \textrm{otherwise} . \end{cases}
 \]
 \item[$3.$] $($Leibniz rule$)$ For any $a, b \in \QK_T(\Fl(\bfr,n))$,
 \[
 \delta_i(a \star b) = \delta_i(a) \star b + {\rm e}^{-\alpha_i} s_i^L(a) \star \delta_i(b) - {\rm e}^{-\alpha_i} s_i^L(a) \star s_i^L(b) .
 \]
 \item[$4.$] The operator $\delta_i$ is a $\QK_{\SL_n}(\Fl(\bfr,n))$-module homomorphism, that is, for any $\kappa$ from $\QK_{\SL_n}(\Fl(\bfr,n))$ and $\eta$ from $\QK_T(\Fl(\bfr,n))$,
\[ \delta_i(\kappa \star \eta) = \kappa \star \delta_i(\eta) . \]
 \end{enumerate}
\end{Proposition}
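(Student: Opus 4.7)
My plan is to deduce all four assertions directly from the definition
\[
\delta_i = \frac{1}{1-{\rm e}^{-\alpha_i}}\bigl(\id - {\rm e}^{-\alpha_i}s_i^L\bigr),
\]
together with the four properties of the left Weyl action $w^L$ established in the preceding proposition, once that action is extended to $\QK_T(\Fl(\bfr,n))$ by functoriality of the equivariant quantum K ring for isomorphisms. A preliminary observation is that $\delta_i$ is well defined: since $s_i^L({\rm e}^{-\alpha_i})={\rm e}^{\alpha_i}$, the numerator $(\id-{\rm e}^{-\alpha_i}s_i^L)(a)$ vanishes on $s_i^L$-invariants, and the divisibility by $1-{\rm e}^{-\alpha_i}$ in general follows by decomposing along the eigenspaces of $s_i^L$.

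For part (1), $\K_T(\pt)^{s_i}[\![Q]\!]$-linearity is immediate since the quantum parameters $Q_j$ are fixed by $s_i^L$. Idempotency $(\delta_i)^2=\delta_i$ reduces to the key identity $s_i^L\delta_i=\delta_i$, which is a short calculation using $(s_i^L)^2=\id$ and $s_i^L({\rm e}^{-\alpha_i})={\rm e}^{\alpha_i}$. The braid relations are obtained from the reformulation $\id-(1-{\rm e}^{-\alpha_i})\delta_i={\rm e}^{-\alpha_i}s_i^L$, combined with the braid relations for the $s_i^L$, by the standard verification used in nil-Hecke/Demazure arguments. Part (2) is then a direct two-case substitution into the definition: if $s_iw<w$ in $W^{\bfr}$, plugging $s_i^L(\cO^w)={\rm e}^{\alpha_i}\cO^w+(1-{\rm e}^{\alpha_i})\cO^{s_iw}$ into $\delta_i(\cO^w)$ simplifies to $\cO^{s_iw}$; otherwise $s_i^L(\cO^w)=\cO^w$ and the formula returns $\cO^w$.

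The main algebraic task is the Leibniz rule in part (3). Since $s_i^L$ is a ring automorphism of $\QK_T(\Fl(\bfr,n))$, I would expand the claimed right-hand side
\[
\delta_i(a)\star b+{\rm e}^{-\alpha_i}s_i^L(a)\star\delta_i(b)-{\rm e}^{-\alpha_i}s_i^L(a)\star s_i^L(b)
\]
over the common denominator $1-{\rm e}^{-\alpha_i}$, observe that the cross terms involving ${\rm e}^{-\alpha_i}s_i^L(a)\star b$ cancel, and collect the remainder to recover $(1-{\rm e}^{-\alpha_i})^{-1}\bigl(a\star b-{\rm e}^{-\alpha_i}s_i^L(a)\star s_i^L(b)\bigr)=\delta_i(a\star b)$. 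Part (4) is then formal: the $\K_{\SL_n}$-linearity of $s_i^L$ recorded in the previous proposition passes to the quantum setting, and since $\QK_{\SL_n}$-classes are fixed by every $w^L$, the Leibniz rule of part (3) collapses to $\delta_i(\kappa\star\eta)=\kappa\star\delta_i(\eta)$. The step I expect to require the most care is confirming that the classical identities for $s_i^L$ (ring automorphism property, braid relations, $\K_{\SL_n}$-linearity) survive the extension from $\K_T$ to $\QK_T(\Fl(\bfr,n))$; this is ultimately a statement about the functoriality of equivariant $\K$-theoretic Gromov--Witten invariants under the $\SL_n$-equivariant automorphisms induced by left multiplication by $n_w$, and once it is in place the remainder of the argument is the routine algebra outlined above.
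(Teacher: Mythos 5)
Your algebra is correct, but note that the paper does not actually prove this proposition: it is quoted from \cite[Proposition~8.3]{Mihalcea2022Left}, so your self-contained derivation is a genuinely different route, namely reconstructing the proof from the definition of $\delta_i$ and the listed properties of $s_i^L$. The reduction you carry out is sound: parts (2), (3), (4) and the idempotency in (1) are all formal consequences of the facts that $s_i^L$ is a ring automorphism of $\bigl(\QK_T(\Fl(\bfr,n)),\star\bigr)$ fixing the $Q_j$, that $s_i^L({\rm e}^\chi a)={\rm e}^{s_i(\chi)}s_i^L(a)$, and that $s_i^L$ acts on Schubert classes by the stated two-case formula; I checked that your cancellation in the Leibniz rule and the identity $s_i^L\delta_i=\delta_i$ both go through. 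You are also right to single out the real content, which is that these properties of $s_i^L$, established in the previous proposition only on $\K_T(\Fl(\bfr,n))$, persist for the $\star$-product; this amounts to the invariance of the K-theoretic Gromov--Witten invariants under the automorphism induced by left translation by $n_w$ (which acts trivially on $H_2$, hence fixes the $Q_j$), and it is precisely what the cited reference supplies, so in your write-up it would have to remain a citation or be argued via functoriality of the moduli spaces $\Mb_{0,n}(X,d)$. Two smaller points deserve tightening. First, your well-definedness remark is misstated: on $s_i^L$-invariants the numerator $(\id-{\rm e}^{-\alpha_i}s_i^L)(a)$ does not vanish but equals $(1-{\rm e}^{-\alpha_i})a$; the correct argument is that on the basis $\{{\rm e}^\chi\cO^w\}$ the numerator is always divisible by $1-{\rm e}^{-\alpha_i}$ in $\Z[P]$, as one sees from the two-case formula for $s_i^L(\cO^w)$ together with the divisibility of ${\rm e}^\chi-{\rm e}^{s_i(\chi)}$. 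Second, the braid relations do not follow ``from the braid relations for the $s_i^L$'' by a purely formal manipulation; they require either an explicit rank-two verification (types $A_1\times A_1$ and $A_2$) in the twisted group algebra generated by the semilinear $W$-action, or an appeal to the standard nil-Hecke identities, so that sentence is the one genuine gap in an otherwise complete reconstruction.
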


Part (1) implies that for each $w \in W$ there are well defined operators $\delta_w$ acting on
quantum K ring $\QK_T(\Fl(\bfr,n))$. Furthermore, part (2) implies that if $w \in W$ is a minimal length representative
in its coset in~$W/W_{\bfr}$, then
\[ \cO^w = \delta_{ww_0}\bigl(\cO^{w_0 W_{\bfr}}\bigr).\]

\subsection{Polynomial representatives} In this section, we use results of
\cite{maeno2023presentation} to obtain a formula for the class of the Schubert
point in~$\QK_T(\Fl(n))$. Then we use Kato's pushforward, and the left divided difference
operators $\delta_w$, to obtain a recursive formula for the Schubert classes in any $\QK_T(\Fl(\bfr,n))$.

To start, note that, in geometric terms, the relations~\eqref{eqn:qkrel} are interpreted as follows
(cf.~\cite{gu2023quantum,GMSXZZ:QKW,huq2024quantum}).

\begin{Theorem}
 For $j=1,\dots, k$, the following relations hold in $\QK_T(X)$:
 \begin{gather}
 \lambda_y(\cS_{j})\star\lambda_y(\cS_{{j+1}}/\cS_{j})\nonumber\\
 \qquad{}=\lambda_y(\cS_{{j+1}})-y^{r_{j+1}-r_j}\frac{Q_j}{1-Q_j}\det(\cS_{{j+1}}/\cS_{j})\star(\lambda_y(\cS_{j})-\lambda_y(\cS_{{j-1}})).\label{eqn:lambda_y_rel}
 \end{gather}
\end{Theorem}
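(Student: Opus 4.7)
The statement is the geometric translation of the algebraic quantum K Whitney relations \eqref{eqn:qkrel} under the isomorphism \eqref{E:QKWhitney}, so my plan is to simply apply the isomorphism $\Phi$ and read off both sides. The substantive content — the fact that $\Phi$ is an isomorphism, i.e., that the generators of $I_Q$ in \eqref{eqn:qkrel} actually vanish after the specified substitutions — has been established in \cite{huq2024quantum} (with earlier special cases in \cite{gu2022quantum,GMSXZZ:QKW}), so I would invoke this as a black box.

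Concretely, first I would record the images of the relevant generating functions under $\Phi$: since $\Phi$ sends $e_\ell(X^{(j)})\mapsto \wedge^\ell \cS_j$ and $e_\ell(Y^{(j)})\mapsto \wedge^\ell(\cS_{j+1}/\cS_j)$,
\[
\prod_{\ell=1}^{r_j}\bigl(1+y X^{(j)}_\ell\bigr)\longmapsto \lambda_y(\cS_j),\qquad \prod_{\ell=1}^{r_{j+1}-r_j}\bigl(1+y Y^{(j)}_\ell\bigr)\longmapsto \lambda_y(\cS_{j+1}/\cS_j),
\]
and likewise $\prod_\ell(1+yX^{(j+1)}_\ell)\mapsto \lambda_y(\cS_{j+1})$, $\prod_\ell(1+yX^{(j-1)}_\ell)\mapsto \lambda_y(\cS_{j-1})$. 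The top-degree elementary symmetric polynomial $\prod_\ell Y^{(j)}_\ell = e_{r_{j+1}-r_j}(Y^{(j)})$ maps to $\det(\cS_{j+1}/\cS_j)$.

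Substituting these into \eqref{eqn:qkrel} and using that $\Phi$ sends the whole expression to $0$ in $\QK_T(\Fl(\bfr,n))$, I would obtain
\[
\lambda_y(\cS_j)\star \lambda_y(\cS_{j+1}/\cS_j) - \lambda_y(\cS_{j+1}) + y^{r_{j+1}-r_j}\frac{Q_j}{1-Q_j}\det(\cS_{j+1}/\cS_j)\star\bigl(\lambda_y(\cS_j)-\lambda_y(\cS_{j-1})\bigr)=0,
\]
which rearranges to the asserted identity \eqref{eqn:lambda_y_rel}. Multiplication on the right-hand side takes place in $\QK_T(X)$, and the use of $1/(1-Q_j)$ is justified in $\K_T(\pt)[\![Q]\!]$.

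There is no real obstacle once \eqref{E:QKWhitney} is accepted; the translation is mechanical. If one wanted a proof independent of \cite{huq2024quantum}, the natural alternative would be to mimic the strategy of Theorem~\ref{thm:whit-short}: first verify these relations on $\QK_T(\Fl(n))$ (using the Toda presentation of Theorem~\ref{thm:toda} together with Proposition~\ref{prop:MNS}), and then push them forward inductively through Kato's $\K_T(\pt)$-algebra homomorphism \eqref{E:kato-push}, using Lemma~\ref{lemma:main-push} to handle the $1/(1-Q_i)$ factors. The only delicate step there would be matching the coefficient $y^{r_{j+1}-r_j}Q_j/(1-Q_j)$ on the quotient bundle $\cS_{j+1}/\cS_j$ in $\Fl(\bfr,n)$ with the iterated pushforward of the rank-one factors $y Q_s/(1-Q_s)$ coming from the complete flag variety.
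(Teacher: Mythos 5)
Your proposal is correct and matches the paper exactly: the paper offers no argument beyond the observation that \eqref{eqn:lambda_y_rel} is the image of the generators \eqref{eqn:qkrel} of $I_Q$ under the isomorphism $\Phi$ of \eqref{E:QKWhitney}, citing \cite{gu2023quantum,GMSXZZ:QKW,huq2024quantum} for that presentation. Your mechanical substitution and rearrangement is precisely this translation, and your aside about an alternative pushforward proof mirrors the paper's own Remark~\ref{rmk:QKW}.
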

\begin{Proposition}\label{prop:F}
 The following holds in $\QK_T(\Fl(n))$:
 \begin{gather}\label{eqn:F}
 \wedge^{p}\cS_k =\sum_{\substack{J\subseteq[k]\\|J|=p}}\Biggl(\prod_{\substack{1\leq j\leq k\\j,j+1\in J}}\frac{1}{1-Q_j}\Biggr)\biggl(\sideset{}{^\star}\prod_{j\in J}\cS_{j}/\cS_{j-1}\biggr)
 \end{gather}
 for $0\leq p\leq k\leq n$, where $\star$ means the quantum $\K$ product.
\end{Proposition}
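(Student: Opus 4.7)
The plan is to argue by induction on $k$, using the generating function identity \eqref{eqn:lambda_y_rel} specialized to $\Fl(n)$ (where each $r_{j+1}-r_j=1$, so the line bundles $L_j:=\cS_j/\cS_{j-1}$ satisfy $\lambda_y(L_j)=1+yL_j$). Extracting the coefficient of $y^p$ from \eqref{eqn:lambda_y_rel} and rearranging should yield the three-term recursion
\[
\wedge^p\cS_{k+1}=\wedge^p\cS_k+L_{k+1}\star\left(\frac{1}{1-Q_k}\wedge^{p-1}\cS_k-\frac{Q_k}{1-Q_k}\wedge^{p-1}\cS_{k-1}\right),
\]
with the base cases $\wedge^0\cS_k=1$ and $\wedge^1\cS_1=L_1$ being immediate. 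This recursion is what drives the whole argument.

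The next step is to verify that the right-hand side $F(p,k)$ of \eqref{eqn:F} satisfies the same recursion. The key manipulation is to split the sum defining $F(p,k+1)$ according to whether $k+1\in J$ or not. The subsets with $k+1\notin J$ contribute exactly $F(p,k)$. For those with $k+1\in J$, write $J=J'\cup\{k+1\}$ and further split on whether $k\in J'$: when $k\in J'$, the adjacency factor picks up an additional $\tfrac{1}{1-Q_k}$, while when $k\notin J'$ it does not. Writing the sum over $J'$ with $k\in J'$ as $F(p-1,k)-F(p-1,k-1)$ and the sum over $J'$ with $k\notin J'$ as $F(p-1,k-1)$, one obtains after a short algebraic simplification
\[
F(p,k+1)-F(p,k)=L_{k+1}\star\left(\frac{1}{1-Q_k}F(p-1,k)-\frac{Q_k}{1-Q_k}F(p-1,k-1)\right),
\]
which matches the recursion above. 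Together with the base cases, this completes the induction.

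The main conceptual obstacle is extracting the usable recursion from \eqref{eqn:lambda_y_rel}: the right-hand side of \eqref{eqn:lambda_y_rel} contains $\det(\cS_{j+1}/\cS_j)\star\lambda_y(\cS_j)$, which a priori is not the same as $\lambda_y(\cS_j)\star\lambda_y(\cS_{j+1}/\cS_j)$ in $\QK_T$, but in the $\Fl(n)$ case $\det(\cS_{j+1}/\cS_j)=L_{j+1}$ is just a line-bundle class and commutativity of $\star$ handles this. The remaining work is the bookkeeping in the induction step, where one must correctly account for the extra $\tfrac{1}{1-Q_k}$ factor that arises exactly when $k$ and $k+1$ are consecutive elements of $J$; this is the delicate combinatorial point that makes the formula work.
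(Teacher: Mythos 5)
Your proposal is correct and follows essentially the same route as the paper: both derive the three-term recursion for $\wedge^p\cS_{k}$ from the quantum Whitney relation \eqref{eqn:lambda_y_rel} (where all quotients $\cS_j/\cS_{j-1}$ are line bundles) and verify it for the right-hand side of \eqref{eqn:F} by the same three-way case split on whether the top index and its predecessor lie in $J$. The only cosmetic difference is that the paper phrases this as a double induction on $(p,k)$ while you induct on $k$ alone, which is equally valid since the recursion only decreases $k$.
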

\begin{proof}
 We use double induction on $p$, $k$, with $p=k=0$ case being clear. Assume that
 \begin{gather*}
 \wedge^{p'}\cS_{k'} =\sum_{\substack{J\subseteq[k']\\|J|=p'}}\Biggl(\prod_{\substack{1\leq j\leq k'\\j,j+1\in J}}\frac{1}{1-Q_j}\Biggr)\biggl(\sideset{}{^\star}\prod_{j\in J}\cS_{j}/\cS_{j-1}\biggr)
 \end{gather*}
 for all $(p',k')<(p,k)$, then considering the three cases for $J\subseteq [k]$: $k\notin J$, $k,k-1\in J$, $k\in J$ and $k-1\notin J$, we have
 \begin{gather*}
 \sum_{\substack{J\subseteq[k]\\|J|=p}}\Biggl(\prod_{\substack{1\leq j\leq k\\j,j+1\in J}}\frac{1}{1-Q_j}\Biggr)\biggl(\sideset{}{^\star}\prod_{j\in J}\cS_{j}/\cS_{j-1}\biggr)\\
 \qquad{}=\wedge^p\cS_{k-1}+\cS_k/\cS_{k-1} \star \biggl(\frac{1}{1-Q_{k-1}}\wedge^{p-1}\cS_{k-1}-\frac{Q_{k-1}}{1-Q_{k-1}}\wedge^{p-1}\cS_{k-2}\biggr)\\
 \qquad{}= \wedge^p\cS_k,
 \end{gather*}
 where the last equality follows from the Whitney relations~\eqref{eqn:F}.
\end{proof}

After harmonizing conventions, and using Proposition~\ref{prop:F}, the following is a restatement of~\cite[Proposition~3.1]{maeno2023presentation}.
 \begin{Corollary}\label{cor:pt-class}
 In $\QK_T(\Fl(n))$, we have
 \[
 \cO^{w_0}=\sideset{}{^\star}\prod_{i=1}^{n-1}\lambda_{-1}({\rm e}^{-\epsilon_{n-i}} \cS_i).
 \]
\end{Corollary}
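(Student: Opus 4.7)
The plan is to deduce the formula from \cite[Proposition~3.1]{maeno2023presentation}, which expresses $\cO^{w_0}$ in the Toda presentation of Theorem~\ref{thm:toda} using the generators $P_j=\det\cS_j$. The first step is a translation between conventions: I would align the weights $\epsilon_{n-i}$ with those used by Maeno--Naito--Sagaki (which may differ by the action of $w_0$ or by a reindexing of fundamental weights), and rewrite each ratio $P_j/P_{j-1}$ in terms of the quantum quotient $\cS_j/\cS_{j-1}$, incorporating the factor $(1-Q_{j-1})^{-1}$ dictated by the identification in Corollary~\ref{cor:fln}.

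The substantive step is to expand each factor on the right-hand side of the claim as
\[
\lambda_{-1}\bigl({\rm e}^{-\epsilon_{n-i}}\cS_i\bigr)=\sum_{p=0}^{i}(-1)^p{\rm e}^{-p\epsilon_{n-i}}\wedge^p\cS_i,
\]
and then apply Proposition~\ref{prop:F} to each $\wedge^p\cS_i$. This rewrites the claim as a star-product of sums indexed by subsets $J\subseteq [i]$, each term of which is a quantum product $\prod_{j\in J}^\star\cS_j/\cS_{j-1}$ weighted by the coefficient $\prod_{j,j+1\in J}(1-Q_j)^{-1}$. Reorganizing the resulting expression so that its total contribution is indexed by a single combinatorial datum should match the MNS formula term by term; in particular, the $(1-Q_j)^{-1}$ denominators produced by Proposition~\ref{prop:F} are exactly what is needed to reconcile the difference between $P_j/P_{j-1}$ and $\cS_j/\cS_{j-1}$.

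The main obstacle I anticipate is the combinatorial bookkeeping: I must verify that the weights ${\rm e}^{-\epsilon_{n-i}}$, the signs $(-1)^p$, and the rational factors in the $Q_j$ recombine across the factors indexed by $i$ in precisely the pattern prescribed by MNS. A useful sanity check is the classical specialization $Q_j\to 0$: Proposition~\ref{prop:F} collapses to the classical identity $\wedge^p\cS_k=\sum_{|J|=p}\prod_{j\in J}\cS_j/\cS_{j-1}$, and the formula reduces to Fulton's description of $\cO^{w_0}$ as the zero locus of a section of an equivariant vector bundle on $\Fl(n)$ \cite{fulton:flags,fulton.lascoux}, giving an independent verification of the classical part of the identity before the quantum corrections are tracked.
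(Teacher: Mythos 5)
Your proposal follows essentially the same route as the paper: the paper's entire justification for this corollary is that it is a restatement of \cite[Proposition~3.1]{maeno2023presentation} after harmonizing conventions and applying Proposition~\ref{prop:F} to translate between the quotient generators $P_j/P_{j-1}$ and the exterior powers $\wedge^p\cS_i$, which is precisely the expansion-and-matching you outline. Your version is in fact more explicit than the paper's one-sentence argument, and the $Q_j\to 0$ sanity check against Fulton's classical description is a reasonable addition, but there is no difference in strategy.
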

We illustrate the corollary next.
\begin{Example} We take $n=2$, thus $\Fl(2) = \bP\bigl(\C^2\bigr)$. Fix $e_1$, $e_2$ to be a basis for $\C^2$. For simplicity we regard $\bP^1$ as $\GL_2/B$ with $T'=(\C^*)^2$ acting naturally, and then restrict this action to $\SL_2$.
With these conventions, the Schubert point is $X^{w_0} = \langle e_2 \rangle$, and
the localizations of $\cS= \cO_{\bP^1}(-1)$ at the fixed points $\bP(\langle e_i \rangle)$, $i=1,2$, are $\cS|_{\bP(\langle e_i \rangle)}={\rm e}^{\epsilon_i}$. Then one easily checks that
\[ \cO^{w_0} = 1 - {\rm e}^{-\epsilon_1} \cS . \]
\end{Example}
\begin{Theorem}\label{thm:w0}
 In $\QK_T(\Fl(\bfr,n))$, we have
 \begin{gather}\label{eqn:poly}
 \cO^{w_0}=\sideset{}{^\star}\prod_{i=1}^{k}\sideset{}{^\star}\prod_{j=r_i}^{r_{i+1}-1}\lambda_{-1}({\rm e}^{-\epsilon_{n-j}} \cS_i) .
 \end{gather}
\end{Theorem}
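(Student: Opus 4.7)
The strategy is to apply Kato's $\K_T(\pt)[\![Q]\!]$-algebra homomorphism $\Phi\colon \QK_T^\poly(\Fl(n)) \to \QK_T^\poly(\Fl(\bfr,n))$ from Theorem~\ref{thm:kato-morphism} directly to the identity of Corollary~\ref{cor:pt-class},
\[
\cO^{w_0} = \sideset{}{^\star}\prod_{i=1}^{n-1}\lambda_{-1}({\rm e}^{-\epsilon_{n-i}}\cS_i).
\]
Since $\Phi$ restricts to the classical pushforward $\pi_*$ on $\K_T(\Fl(n))$ and the Schubert point of $\Fl(n)$ maps isomorphically onto that of $\Fl(\bfr,n)$, the left hand side becomes $\cO^{w_0}\in\QK_T^\poly(\Fl(\bfr,n))$. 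Because each factor $\lambda_{-1}({\rm e}^{-\epsilon_{n-i}}\cS_i)$ already lies in the classical subring $\K_T(\Fl(n))$, multiplicativity of $\Phi$ turns the right hand side into
\[
\sideset{}{^\star}\prod_{i=1}^{n-1}\pi_*\bigl(\lambda_{-1}({\rm e}^{-\epsilon_{n-i}}\cS_i)\bigr),
\]
a quantum K product in $\QK_T^\poly(\Fl(\bfr,n))$ of classical pushforwards. The remaining task is to compute these pushforwards and regroup.

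The key computation I would carry out is the classical identity
\[
\pi_*\bigl(\lambda_{-1}({\rm e}^{-\epsilon_{n-i}}\cS_i)\bigr) = \lambda_{-1}({\rm e}^{-\epsilon_{n-i}}\cS_s) \qquad\text{in } \K_T(\Fl(\bfr,n))
\]
whenever $r_s\le i<r_{s+1}$ and $0\le s\le k$, with the convention $\cS_0=0$. To prove this, I would factor $\pi$ through the intermediate partial flag obtained by inserting the index $i$ into $\bfr$. The first projection leaves $\lambda_y(\cS_i)$ unchanged, since $\cS_i$ is pulled back from that intermediate flag and the fibers are products of flag varieties with $\chi(\cO)=1$; the second realizes the intermediate flag as the Grassmann bundle $\mathbb{G}(i-r_s,\,\cS_{s+1}/\cS_s)\to\Fl(\bfr,n)$ with relative tautological subbundle $\cS_i/\cS_s$. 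Applying multiplicativity of $\lambda_y$ to $0\to\cS_s\to\cS_i\to\cS_i/\cS_s\to 0$, the projection formula for the pulled-back factor $\lambda_{-1}({\rm e}^{-\epsilon_{n-i}}\cS_s)$, and Kapranov's vanishing (Proposition~\ref{prop:relativeBWB}), which forces $\pi_*\bigl(\wedge^\ell(\cS_i/\cS_s)\bigr)=0$ for $\ell>0$, collapses $\pi_*\bigl(\lambda_{-1}({\rm e}^{-\epsilon_{n-i}}\cS_i/\cS_s)\bigr)$ to its $\ell=0$ contribution, namely~$1$.

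With this identity in hand, the factors indexed by $1\le i<r_1$ (case $s=0$) push forward to $1$ and drop out, while the remaining $n-r_1$ factors group, according to the value of $s$, into the claimed double product indexed by $1\le s\le k$ and $r_s\le j<r_{s+1}$. The main obstacle will be establishing this classical pushforward identity cleanly, which reduces to a careful bookkeeping of the Grassmann bundle structure plus Kapranov's theorem. Once that is done, the quantum statement follows essentially for free from Kato's ring homomorphism, and no genuinely quantum K computation is needed; this transparently realizes the \textit{quantum = classical} phenomenon emphasized in the introduction.
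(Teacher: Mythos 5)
Your proof is correct and follows essentially the same route as the paper: both push the $\Fl(n)$ identity of Corollary~\ref{cor:pt-class} forward through Kato's ring homomorphism, using multiplicativity to reduce everything to the classical pushforward $\pi_*\bigl(\lambda_{-1}({\rm e}^{-\epsilon_{n-i}}\cS_i)\bigr)=\lambda_{-1}({\rm e}^{-\epsilon_{n-i}}\cS_s)$, which is obtained from the Whitney formula, the projection formula, and Kapranov's vanishing. The only cosmetic differences are that the paper proceeds by induction, forgetting one step of the flag at a time rather than in one shot, and computes the pushforward of each factor by first rewriting it via the quantum Whitney relation~\eqref{eqn:lambda_y_rel} --- whose extra $Q$-term dies under $\pi_*$ because $\pi_*\det(\cS_i/\cS_{i-1})=0$ --- instead of the classical Whitney factorization you use.
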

\begin{proof}
 Let $X=\Fl(r_1,\dots,r_k;n)$, $Y=\Fl\bigl(r_1,\dots, \widehat{r}_i,\dots, r_k;n\bigr)$, and let
 $\pi\colon X\to Y$ be the natural projection. Corollary~\ref{cor:pt-class} implies that the
 claim is true for $\Fl(n)$. By induction, we assume that~\eqref{eqn:poly} holds for
 $X$, and we compute its pushforward under $\pi$ using Kato's pushforward map
 from Theorem~\ref{thm:kato-morphism}.
 Note that all but the term including
 $\cS_i$ are pulled back from~$Y$. By~\eqref{eqn:lambda_y_rel}, we have
\begin{gather*}
 \lambda_{-1}({\rm e}^{-\epsilon_{n-j}} \cS_i)=\lambda_{-1}({\rm e}^{-\epsilon_{n-j}}\cS_{i-1})\star\lambda_{-1}({\rm e}^{-\epsilon_{n-j}} \cS_i/\cS_{i-1})\\
 \qquad{}+\frac{Q_{i-1}}{1-Q_{i-1}}{ (-{\rm e}^{-\epsilon_{n-j}})^{r_{i+1}-r_i}} \det(\cS_i/\cS_{i-1})\star(\lambda_{-1}({\rm e}^{-\epsilon_{n-j}}\cS_{i-1})-\lambda_{-1}({\rm e}^{-\epsilon_{n-j}}\cS_{i-2})),
\end{gather*}
where we used (the $\lambda$-ring formalism asserting) that
 $\lambda_{-1}({\rm e}^\chi \otimes E) = \lambda_{-{\rm e}^{\chi}}(E)$. Since the pushforward
\smash{$\pi_*\bigl(\wedge^j \cS_i/\cS_{i-1}\bigr)\!=\!0$} for any $j>0$ by Proposition~\ref{prop:relativeBWB},
$\pi_*\lambda_{-1}({\rm e}^{-\epsilon_{n-j}} \cS_i)=\lambda_{-1}({\rm e}^{-\epsilon_{n-j}}\cS_{i-1})$,
and the claim on $Y$ follows from the projection formula.
\end{proof}

We illustrate the formula in Theorem~\ref{thm:w0} in the case of $\Gr(2,4)$.
The Schubert classes in $\Gr(2,4)$ are typically indexed by partitions
in the $2 \times 2$ square; the dictionary to translate into the indexing by Weyl group elements is the following:
\begin{gather*}
\cO^{(1)}= \cO^{s_2 W_\bfr},\qquad \cO^{(2)}= \cO^{s_3 s_2 W_\bfr},\\
\cO^{(1,1)}= \cO^{s_1s_2 W_\bfr},\qquad \cO^{(2,1)}= \cO^{s_1s_3s_2 W_\bfr},\qquad \cO^{(2,2)}= \cO^{s_2 s_1 s_3 s_2 W_\bfr} .
\end{gather*}

\begin{Example}[Theorem~\ref{thm:w0} for $\Gr(2,4)$]
Denote by $\cS$ the tautological subbundle.
Using (for instance) a
localization argument, one calculates that
\[ \lambda_y(\cS) = (1+y {\rm e}^{\epsilon_1}) (1+y {\rm e}^{\epsilon_2}) \cO^{\varnothing} -y {\rm e}^{\epsilon_2}(1+y {\rm e}^{\epsilon_1})\cO^{(1)}- y {\rm e}^{\epsilon_1} \cO^{(1,1)} . \]
Thus for any weight $\chi$,
\[ \lambda_{-1}({\rm e}^{\chi} \cS) = 1 - {\rm e}^{\chi} \cS + {\rm e}^{2\chi} \wedge^2 \cS \/ \]
can be expanded into a combination of Schubert classes. Then one checks directly that
\[ \lambda_{-1}({\rm e}^{-\epsilon_2} \cS) \star \lambda_{-1}({\rm e}^{-\epsilon_1} \cS) = \cO^{(2,2)} . \]

The relevant multiplications are\footnote{These can be calculated for example with A. Buch's {\em Equivariant Schubert Calculator}, available at \url{https://sites.math.rutgers.edu/~asbuch/equivcalc/}.}
\begin{gather*}
\cO^{(1)} \star \cO^{(1)} = (1- {\rm e}^{\epsilon_3-\epsilon_2})\cO^{(1)} + {\rm e}^{\epsilon_3-\epsilon_2}\cO^{(2)} + {\rm e}^{\epsilon_3-\epsilon_2}\cO^{(1,1)}-{\rm e}^{\epsilon_3-\epsilon_2}\cO^{(2,1)}, \\
\cO^{(1)} \star \cO^{(1,1)} = (1- {\rm e}^{\epsilon_3-\epsilon_1})\cO^{(1,1)} + {\rm e}^{\epsilon_3-\epsilon_1}\cO^{(2,1)}, \\
\cO^{(1,1)} \star \cO^{(1,1)} = {\rm e}^{\epsilon_3+\epsilon_2-2 \epsilon_1} \cO^{(1,1)} -
 {\rm e}^{\epsilon_3+\epsilon_2-2 \epsilon_1} \cO^{(2,1)} - {\rm e}^{\epsilon_3-\epsilon_1}\cO^{(1,1)}
 + {\rm e}^{\epsilon_3-\epsilon_1}\cO^{(2,1)} \\
 \hphantom{\cO^{(1,1)} \star \cO^{(1,1)} =}{}
 - {\rm e}^{\epsilon_2-\epsilon_1}\cO^{(1,1)} +{\rm e}^{\epsilon_2-\epsilon_1}\cO^{(2,2)} + \cO^{(1,1)} .
 \end{gather*}
\end{Example}

Next we state the main result of this section.
Recall the Whitney presentation $\Phi\colon S\dbb{Q}/I_Q \to \QK_T(\Fl(\bfr,n)))$ from~\eqref{E:QKWhitney}.

\begin{Theorem}\label{thm:QKpolys} Let $\bfr = (r_1, \dots, r_k)$. Under the isomorphism $\Phi$, the elements
\[
\mathcal{G}_w(X):= \Phi^{-1}\Biggl(\delta_w\Biggl(\sideset{}{^\star}\prod_{i=1}^{k}\sideset{}{^\star}\prod_{j=r_i}^{r_{i+1}-1}\lambda_{-1}({\rm e}^{-\epsilon_{n-j}} \cS_i)\Biggr)\Biggr) \]
are sent to $W_{\bfr}$-symmetric polynomials in the variables
$X^{(j)}$ for $j=1, \dots , k$,
such that
\[ \Phi(\mathcal{G}_w(X)) = \cO^w \qquad \in \QK_T(\Fl(\bfr,n)) .\]
Furthermore, the polynomials $\mathcal{G}_w(X)$ are independent of the Novikov variables $Q_i$
for $1 \le i \le k$.\end{Theorem}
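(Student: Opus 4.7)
The plan is a decreasing induction on the length of the Weyl element, verifying the claim directly for $w=w_0$ and then showing that each application of a left divided difference operator $\delta_i$ preserves the stated shape. For the base case, Theorem~\ref{thm:w0} expresses $\cO^{w_0}$ as a $\star$-product of $\lambda_{-1}$-classes involving only the bundles $\cS_i$. Under $\Phi^{-1}$, quantum products become ordinary multiplication in $S\dbb{Q}/I_Q$ and $\wedge^\ell \cS_i$ corresponds to $e_\ell(X^{(i)})$, so the formula lifts directly to the manifestly $W_\bfr$-symmetric polynomial
\[
 \mathcal{G}_{w_0}(X)\;=\;\prod_{i=1}^{k}\prod_{j=r_i}^{r_{i+1}-1}\prod_{\ell=1}^{r_i}\bigl(1-{\rm e}^{-\epsilon_{n-j}}X^{(i)}_\ell\bigr)\;\in\;S,
\]
which contains neither the $Y^{(j)}$ nor the Novikov parameters $Q_i$.

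For the inductive step, the main task is to describe the action of $\delta_i$ on the Whitney presentation. Since $\cS_j$ and $\cS_{j+1}/\cS_j$ are $\SL_n$-equivariant, parts~(1)--(2) of the cited Mihalcea--Naruse--Su proposition force $s_i^L$ to fix each $e_\ell(X^{(j)})$, each $e_\ell(Y^{(j)})$, and each $Q_i$, while shifting equivariant characters by ${\rm e}^\chi\mapsto{\rm e}^{s_i\chi}$. Transported through $\Phi$, the operator $s_i^L$ is thus the unique ring automorphism of $S\dbb{Q}/I_Q$ that acts on $\K_T(\pt)$ via $s_i$ and fixes all the $X^{(j)}$, $Y^{(j)}$, and $Q_i$; it descends cleanly to the quotient because the Whitney relations~\eqref{eqn:qkrel} involve the equivariant weights only through the $W$-symmetric combinations $\wedge^\ell\C^n=e_\ell(T_1,\dots,T_n)$. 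Applying $\delta_i=(1-{\rm e}^{-\alpha_i})^{-1}(\id-{\rm e}^{-\alpha_i}s_i^L)$ to a polynomial in the $X^{(j)}$ alone (with $\K_T(\pt)$-coefficients) therefore reduces to the ordinary Demazure operator acting on the coefficient ring, and returns another polynomial of the same form, with the divisibility of the numerator by $1-{\rm e}^{-\alpha_i}$ being standard.

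Iterating, $\mathcal{G}_w(X)=\Phi^{-1}(\delta_w(\cO^{w_0}))$ is a $W_\bfr$-symmetric polynomial in the $X^{(j)}$ with $\K_T(\pt)$-coefficients and free of all $Q_i$, while the identification $\Phi(\mathcal{G}_w(X))=\cO^w$ follows (after the usual $w\leftrightarrow ww_0$ reindexing) from repeated application of Proposition~\ref{prop:QKDem}(2). No Leibniz-rule manipulation enters, because $\delta_i$ is applied directly to a polynomial representative rather than to a $\star$-product of Schubert classes. The one point requiring care is verifying that $s_i^L$ genuinely descends to $S\dbb{Q}/I_Q$ as the simple shift above; this reduces to the $W$-invariance of $\wedge^\ell\C^n$ already noted, and is the main technical check of the argument.
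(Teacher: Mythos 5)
Your proposal is correct and follows essentially the same route as the paper: the paper's proof is a two-sentence appeal to Proposition~\ref{prop:QKDem} applied to the identity~\eqref{eqn:poly}, and your argument simply fills in the details — in particular, your observation that $s_i^L$ fixes the $\SL_n$-equivariant classes $\wedge^\ell\cS_j$ and the $Q_i$, so that $\delta_i$ acts only on the $\K_T(\pt)$-coefficients, is precisely the mechanism the paper encodes afterwards via the operators $\rho_i$ in Theorem~\ref{thm:double-Grothendieck}.
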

\begin{proof} This follows from Proposition~\ref{prop:QKDem}: polynomial representatives for all Schubert classes can be obtained by applying the quantum left divided difference operators $\delta_i$ to the identity~\eqref{eqn:poly} above. This process does not introduce any $Q$'s.
\end{proof}

The proposition may be interpreted as saying that the {\em same} polynomials representing
Schubert classes in $\K_T(\Fl(\bfr,n))$ also represent their quantizations in $\QK_T(\Fl(\bfr,n))$;
of course, the ideal of {\em relations} in $\QK_T(\Fl(\bfr,n))$ needs to be quantized.

We illustrate next the calculation of the polynomials representing Schubert classes in the ring
$\QK_T(\Gr(2,4))$.
 \begin{Example}
We use left divided difference operators to find polynomial representatives for all Schubert classes in $\QK_T(\Gr(2,4))$, knowing from Theorem~\ref{thm:w0} the representative for the Schubert point.

Recall that $\alpha_i = \epsilon_i - \epsilon_{i+1}$, and denote by $\cS$ the class
of the tautological subbundle. First, observe that $\delta_i \bigl({\rm e}^\chi \otimes \wedge^k \cS\bigr) = \delta_i ({\rm e}^\chi) \otimes \wedge^k \cS$ by Proposition~\ref{prop:QKDem}\,(4), and
\[
\delta_i ({\rm e}^{\chi} ) =
\begin{cases} {\rm e}^{\chi}, & s_i(\chi) = \chi, \\
{\rm e}^{\chi}\dfrac{1 - ({\rm e}^{-\alpha_i})^{1+ \langle \chi, \alpha_i^\vee \rangle}}{1-{\rm e}^{-\alpha_i}}, & otherwise. \end{cases}
\]
It follows that
\[
\delta_i \biggl({\rm e}^{-k \epsilon_j} \wedge^k \cS\biggr) =
\begin{cases}
{\rm e}^{-k \epsilon_j} \wedge^k \cS, & j\neq i,\ i+1 , \\
0, & j=i,\ k=1, \\
- {\rm e}^{-(\epsilon_i+ \epsilon_{i+1})} \wedge^2 \cS, & j=i,\ k=2 , \\
{\rm e}^{-k \epsilon_{i+1}}\bigl(1+ {\rm e}^{-\alpha_i} + \dots + {\rm e}^{-(k-1)\alpha_i}\bigr) \wedge^k \cS, & j=i+1,\ k \ge 1 .
\end{cases}
\]

By Theorem~\ref{thm:w0}, $\cO^{(2,2)}$ is equal to
\[
\lambda_{-1}({\rm e}^{-\epsilon_1} \cS) \star \lambda_{-1}({\rm e}^{-\epsilon_2} \cS) = \bigl(1 - {\rm e}^{-\epsilon_1} \cS + {\rm e}^{-2\epsilon_1} \wedge^2 \cS\bigr)\star \bigl(1 - {\rm e}^{-\epsilon_2} \cS + {\rm e}^{-2\epsilon_2} \wedge^2 \cS\bigr) .
\]
We have that \smash{$\delta_2\bigl(\cO^{(2,2)}\bigr)=\cO^{(2,1)}$}. We now calculate
\smash{$\delta_2\bigl(\cO^{(2,2)}\bigr)$} by means of the Leibniz rule from Proposition~\ref{prop:QKDem}. We obtain
\begin{gather*}
\cO^{(2,1)} = \lambda_{-1}({\rm e}^{-\epsilon_1} \cS) \star \bigl(1- {\rm e}^{-(\epsilon_2+\epsilon_3)} \wedge^2 \cS\bigr) ,\\
\cO^2 =\delta_1\bigl(\cO^{(2,1)}\bigr) = 1 -({\rm e}^{-\epsilon_1-\epsilon_2}+{\rm e}^{-\epsilon_2 - \epsilon_3}+{\rm e}^{-\epsilon_1 - \epsilon_3}) \wedge^2 \cS + ({\rm e}^{-\epsilon_1 - \epsilon_2 - \epsilon_3}) \cS \star \wedge^2 \cS ,\\
\cO^{(1,1)} = \delta_3\bigl(\cO^{(2,1)}\bigr) = \lambda_{-1}({\rm e}^{-\epsilon_1} \cS) , \\
\cO^{(1)} = \delta_1\bigl(\cO^{(1,1)}\bigr) =1- {\rm e}^{-(\epsilon_1+\epsilon_2)} \wedge^2 \cS , \\
\cO^\varnothing = \delta_1 \bigl(\cO^{(1)}\bigr) = 1 .
\end{gather*}
\end{Example}

Finally, we can rewrite the operators $\delta_i$ as operators $\rho_i$ acting on $\Z\bigl[T_1^{\pm1},\dots,T_n^{\pm1}\bigr]$ by
\begin{align*}
 \rho_i=\frac{T_i-T_{i+1}s_i}{T_i-T_{i+1}},
\end{align*}
where $s_i$ replaces each $T_j$ by $T_{s_i(j)}$, and further extend it to
\begin{align*}
 S\dbb{Q}=\Z\bigl[e_1\bigl(X^{(j)}\bigr),\dots, e_{r_j}\bigl(X^{(j)}\bigr),e_1\bigl(Y^{(j)}\bigr),\dots,e_{r_{j+1}-r_j}\bigl(Y^{(j)}\bigr)\bigr]_{j=1}^k\dbb{Q}\otimes \Z\bigl[T_1^{\pm1},\dots,T_n^{\pm1}\bigr]
\end{align*}
by \smash{$\Z\bigl[e_1\bigl(X^{(j)}\bigr),\dots, e_{r_j}\bigl(X^{(j)}\bigr),e_1\bigl(Y^{(j)}\bigr),\dots,e_{r_{j+1}-r_j}\bigl(Y^{(j)}\bigr)\bigr]_{j=1}^k\dbb{Q}$}-linearity. Given $w\in S_{n}$ with reduced expression $w=s_{i_1}\dots s_{i_l}$, we define
\begin{align*}
 \rho_w=\rho_{i_1}\dots\rho_{i_l}.
\end{align*}
Since the operators $\rho_i$ satisfy the braid relations, the operator $\rho_w$ doesn't depend on the choice of reduced expression. We may restate Theorem~\ref{thm:QKpolys} as follows.

\begin{Theorem}\label{thm:double-Grothendieck}
 For $w\in W^\bfr$, the isomorphism $\Phi\colon S\dbb{Q}/I_Q\to \QK_T(\Fl(\bfr,n))$ sends the class~of
 \[\rho_w\Biggl(\prod_{i=1}^{k}\prod_{j=r_i}^{r_{i+1}-1}\prod_{\ell=1}^{r_i}\bigl(1-T_{n-j}^{-1}X^{(i)}_\ell\bigr)\Biggr) \]
 to $\cO^w$.
\end{Theorem}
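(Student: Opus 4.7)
The plan is to combine Theorem~\ref{thm:w0}, which identifies the class of the Schubert point, with the machinery of quantum left divided difference operators from Proposition~\ref{prop:QKDem}, by proving that the combinatorial operators $\rho_i$ on $S\dbb{Q}$ are, under the isomorphism $\Phi$, the incarnation of the operators $\delta_i$ on $\QK_T(\Fl(\bfr,n))$. Once this intertwining is in place, the theorem follows immediately from Theorem~\ref{thm:QKpolys}.

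The first step is to recognize that under $\Phi$ the polynomial
\[ P:=\prod_{i=1}^{k}\prod_{j=r_i}^{r_{i+1}-1}\prod_{\ell=1}^{r_i}\bigl(1-T_{n-j}^{-1}X^{(i)}_\ell\bigr) \]
maps to the star product $\prod^\star_{i,j}\lambda_{-1}(\mathrm{e}^{-\epsilon_{n-j}}\cS_i)$. Indeed, $\Phi$ is a $\K_T(\pt)\dbb{Q}$-algebra homomorphism sending $e_\ell\bigl(X^{(i)}\bigr)\mapsto \wedge^\ell(\cS_i)$, and $T_{n-j}=\mathrm{e}^{\epsilon_{n-j}}$ by the definition of the $T_\ell$. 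Theorem~\ref{thm:w0} then identifies this star product with $\cO^{w_0}$.

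The main step is the intertwining identity $\Phi\circ\rho_i=\delta_i\circ\Phi$ for every simple reflection $s_i$. Since $\alpha_i=\epsilon_i-\epsilon_{i+1}$ gives $\mathrm{e}^{-\alpha_i}=T_{i+1}/T_i$, the defining formula of $\delta_i$ rewrites as
\[ \delta_i=\frac{T_i-T_{i+1}\,s_i^L}{T_i-T_{i+1}}, \]
matching the definition of $\rho_i$ symbol-for-symbol. Proposition~\ref{prop:QKDem}(4) shows that $\delta_i$ is $\QK_{\SL_n}(\Fl(\bfr,n))$-linear, hence acts as the identity on the subring generated by the classes $\wedge^\ell(\cS_j)$ and $\wedge^\ell(\cS_j/\cS_{j-1})$, as well as on the Novikov variables $Q_r$; on $\K_T(\pt)$ it acts via $s_i^L$, which by the first item of the same proposition swaps $T_i\leftrightarrow T_{i+1}$. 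This is precisely the action of $\rho_i$ on $S\dbb{Q}$. The braid relations shared by the $\rho_i$ and the $\delta_i$ (Proposition~\ref{prop:QKDem}(1)) then extend the identity to $\Phi\circ\rho_w=\delta_w\circ\Phi$ for every $w\in W$.

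Combining the two steps gives
\[ \Phi\bigl(\rho_w(P)\bigr)=\delta_w\bigl(\Phi(P)\bigr)=\delta_w(\cO^{w_0})=\cO^w \]
for $w\in W^\bfr$, where the last equality is Theorem~\ref{thm:QKpolys}. The main obstacle is the $\SL_n$-linearity verification in the intertwining step: although the formulas for $\rho_i$ and $\delta_i$ are visibly identical once $\mathrm{e}^{-\alpha_i}$ is expanded, one must check that $\delta_i$ acts trivially not only on the individual equivariant classes of tautological subbundles and their quotients, but on the whole polynomial subring they generate inside $\QK_T(\Fl(\bfr,n))$. This follows cleanly from Proposition~\ref{prop:QKDem}(4) applied to star products of such classes, together with the $\mathbb{Q}\dbb{Q}$-linearity of $\delta_i$, so the verification is essentially formal.
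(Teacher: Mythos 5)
Your proposal is correct and follows the same route as the paper, which presents this theorem as a direct restatement of Theorem~\ref{thm:QKpolys}: the whole content is the intertwining $\Phi\circ\rho_i=\delta_i\circ\Phi$, which you verify exactly as intended, using that $\Phi$ sends the given polynomial to $\prod^\star\lambda_{-1}({\rm e}^{-\epsilon_{n-j}}\cS_i)=\cO^{w_0}$ and that $\delta_i$ is $\QK_{\SL_n}$-linear while $s_i^L$ permutes the equivariant parameters. The only nitpick is a citation slip: the facts that $s_i^L$ twists by $e^{s_i\chi}$ on $\K_T(\pt)$ and is $\K_{\SL_n}$-linear come from items (1)--(2) of the first Mihalcea--Naruse--Su proposition (on $w^L$), not from Proposition~\ref{prop:QKDem}(1).
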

We have not seen similar polynomials in the study of quantum K theory of flag manifolds.

\appendix

\section[Toda relations from finite difference operators (after Anderson--Chen--Tseng)]{Toda relations from finite difference operators\\ (after Anderson--Chen--Tseng)}\label{sec:toda}

The proof of the Toda relations in~\cite{maeno.naito.sagaki:QKideal}
relies on Kato's earlier results~\cite{kato:loop}. For the quantum K~ring
$\QK_T(\Fl(n))$, there is another proof of these relations, using an argument
combining the results of Iritani, Milanov and
Tonita~\cite{Iritani:2013qka} with results of Givental
and Lee~\cite{givental_lee}. More precisely, it is shown in~\cite{Iritani:2013qka} that
the symbols of finite
difference operators annihilating the K-theoretic~$J$ function of a variety $X$ give relations in the quantum K ring of $X$. Givental and Lee's results from loc.\ cit.\ imply that the
K-theoretic~$J$ function of the complete flag variety is an eigenfunction of the
(finite difference) Toda Hamiltonians. This observation was made in the unpublished note~\cite{act:2017} of Anderson--Chen--Tseng, but removed from the published version of their paper. For the sake of completeness, we give a brief account below, and in the process fill in some of the details to make the argument complete.

We start with recalling the definition
of the K-theoretic $J$-function of the complete flag variety $X=\Fl(n)$.
Denote by $P_i= \wedge^i \cS_i$; it is known that these line bundles algebra generate~$\K_T(\Fl(n))$
over~$\K_T(\pt)$; see, e.g.,~\cite[Proposition~3.1]{GMSXZZ:Nakayama}. Furthermore,
the curve classes associated to the Novikov variables
$Q_i$ are dual to the classes
$c_1\bigl(\wedge^i \cS_i^*\bigr)$.
For a fixed effective (multi)degree~${d \in H_2(X)}$, let $L$ be the cotangent line bundle at the unique marked point on the moduli space $\overline{M}_{0,1}(X,d)$.
Let also $\phi^\alpha$, $\phi_\alpha$ denote Poincar\'e-dual bases for $\K_T(X)$. (For example,
one may take Schubert classes $\cO^w$, and their duals -- the ideal sheaves of the boundary of the opposite Schubert varieties.)
The small $J$-function of $X$, denoted by $J_X$, is defined by
\[
J_X(q)\coloneq (1-q)\prod_i P_i^{\frac{\ln(Q_i)}{\ln(q)}}\sum_{d,\alpha} Q^d\biggl\langle \frac{\phi_\alpha}{1-qL}\biggr\rangle_{0,1,d}\phi^\alpha .
\]
We will explain later the meaning and the effect of the factor
\smash{$P_i^{\ln(Q_i)/\ln(q)}$}. We also note that the presence
of the prefactors $(1-q)$
and \smash{$\prod_i P_i^{\ln(Q_i)/\ln(q)}$} varies in the literature.
Our description agrees with the one used by Givental and Lee in
\cite{givental_lee}, and corresponds to the function denoted by $\tilde{J}$ in~\cite{Iritani:2013qka}.

We recall some basics on the formalism of difference operators. Consider commuting
variables $q, x_1, \dots, x_n$, and define the difference operators
\[
T_i:= q^{x_i \partial_{x_i}} = \sum_{ k \ge 0}^\infty \frac{1}{k!}((\ln q)x_i \partial_{x_i} )^k.
\]
\big(More generally, for a differential operator $\mathfrak{f}$, one defines the $q$-difference operator $q^{\mathfrak{f}}={\rm e}^{(\ln q) \mathfrak{f}}=\sum_{j=0}^\infty \frac{1}{j!}((\ln q)\mathfrak{f})^j$.\big) Note that
\[
T_i \bigl(x_j^{\pm 1}\bigr) = \sum_{k=0}^\infty \frac{1}{k!} ( \ln(q) x_i \partial_{x_i} ) \bigl(x_j^{\pm 1}\bigr) = q^{\pm \delta_{ij}} x_j^{\pm 1} ,
\]
which explains the `difference operator' terminology. More generally, for any Laurent polynomial in commuting variables $x_i$, we have
\[
T_i f(x_1,\dots,x_i,\dots,x_n) = f(x_1,\dots,qx_i,\dots, x_n) ,
\]
i.e., $T_i$ is an automorphism of the Laurent polynomial ring
$\Z\bigl[q^{\pm 1}; x_1^{\pm 1}, \dots, x_{n}^{\pm 1}\bigr]$.
We use this expression to extend the definition of $T_i$ to any function in the indeterminates $q, x_1, \dots, x_n$.

Now consider the subring of Laurent polynomials
\[
\Z\bigl[q^{\pm 1}; Q_1^{\pm 1}, \dots, Q_{n-1}^{\pm 1}\bigr] \hookrightarrow \Z\bigl[q^{\pm 1}; x_1^{\pm 1}, \dots, x_{n}^{\pm 1}\bigr]
\]
obtained by sending \smash{$Q_i \mapsto q^{-1} \frac{x_{i+1}}{x_i}$}. The restriction of $T_i$ to this subring is given by
\begin{equation}\label{E: TiQ}
T_i = q^{-Q_{i}\partial_{Q_{i}}}q^{Q_{i-1}\partial_{Q_{i-1}}} ,
\end{equation}
where \smash{$q^{Q_i \partial_{Q_i}}$} are the difference operators on the subring in $Q_i$'s.

With that in mind, we can now explain the meaning of the factor \smash{$P^{\frac{\ln(Q_i)}{\ln(q)}}$}.
The difference operators \smash{$q^{Q_i \partial_{Q_i}}$} act on functions in $Q_i$'s, and
one calculates that
\[ q^{Q_i \partial_{Q_i}}\bigl(P^{\frac{\ln(Q_j)}{\ln(q)}}\bigr) =
P^{\frac{\ln(q^{\delta_{ij}} Q_j)}{\ln(q)}} = P^{\delta{ij}}
P^{\frac{\ln(Q_j)}{\ln(q)}} . \]
In other words, the factor \smash{$P^{\frac{\ln(Q_i)}{\ln(q)}}$} should be regarded as
a formal variable
which transforms according to the rule above under the difference operators.

The relations in the quantum K ring are given by the Hamiltonians of the finite difference
(or relativistic) Toda lattice. There is some ambiguity in the exact expressions for the
Toda Hamiltonians, since their construction depends on choices;
see, e.g.,~\cite[Remark~5]{givental_lee}. We follow here the approach from~\cite{gloI},
but we will also need to make some changes of variables,
in order to fit with the conventions in our main reference~\cite{givental_lee}.
For the convenience of the reader, we briefly included some of the details below.

The Hamiltonians of the $q$-deformed type $A$ Toda chain have the form
\begin{align}\label{eqn:hamil}
 H_k=\sum_{0= i_0<\dots<i_k\leq n} \prod_{l=1}^{k}\biggl(1-\frac{x_{i_l}}{x_{i_l-1}}\biggr)^{1-\delta_{i_l-i_{l-1},1}} \prod_{l=1}^{k} T_{i_l},\qquad k=1,\dots,n,
\end{align}
where $q$ and $x_i$ are commuting variables, and $T_i=q^{x_i\partial_{x_i}}$ is the $q$-difference operator above. It was proved in~\cite{gloI} that the operators $H_k$ are limits of Macdonald operators, and the latter are known to commute.
This implies that $H_k$ also commute.

As above, let $Q_i=q^{-1}{x_{i+1}}{x_i}^{-1}$, with $Q_0=Q_n=0$. Then, using
\eqref{E: TiQ}, one can rewrite~\eqref{eqn:hamil} as
\begin{align*}
 H_k=\sum_{0= i_0<\dots<i_k\leq n} \prod_{l=1}^{k}(1-qQ_{i_l-1})^{1-\delta_{i_l-i_{l-1},1}} \prod_{l=1}^{k} q^{-Q_{i_l}\partial_{Q_{i_l}}}q^{Q_{i_l-1}\partial_{Q_{i_l-1}}},\qquad k=1,\dots,n.
\end{align*}
Replacing $q$ by $q^{-1}$, we obtain
\begin{align*}
 \widehat{H}_k&{}= \sum_{0= i_0<\dots<i_k\leq n}\prod_{l=1}^{k}\bigl(1-q^{-1}Q_{{i_l}-1}\bigr)^{1-\delta_{i_l-i_{l-1},1}} \prod_{l=1}^{k} q^{Q_{i_l}\partial_{Q_{i_l}}-Q_{i_l-1}\partial_{Q_{{i_l}-1}}}\\
 &{}=\sum_{0= i_0<\dots<i_k\leq n} \prod_{l=1}^{k} q^{Q_{i_l}\partial_{Q_{i_l}}-Q_{i_l-1}\partial_{Q_{{i_l}-1}}}\prod_{l=1}^{k}(1-Q_{{i_l}-1})^{1-\delta_{i_l-i_{l-1},1}}
 ,\qquad k=1,\dots,n .
 \end{align*}
\begin{Remark} The substitutions above
ensure that the first Hamiltonian $\widehat{H}_1$ agrees
with the one used in~\cite{givental_lee}. The substitution chosen in~\cite{act:2017}
produces similar
operators, but with the $q$-shifts and the Novikov terms in the opposite order.
\end{Remark}
The following key result of Givental and Lee~\cite[Theorem~2]{givental_lee} shows that the $J$ function is an eigenfunction for $J_{\Fl(n)}$.

\begin{Theorem}[Givental--Lee]\label{thm:GLeemain}
 $\widehat{H}_1J_{\Fl(n)}=\mathbb{C}^n J_{\Fl(n)}$.
\end{Theorem}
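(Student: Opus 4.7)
The plan is to verify the eigenfunction equation directly from Givental and Lee's explicit formula for $J_{\Fl(n)}$, which comes from torus localization on quasimap (hyperquot) compactifications. The first step is to record the series expansion
\[
J_{\Fl(n)}(q) = (1-q)\prod_{i=1}^{n-1} P_i^{\ln(Q_i)/\ln(q)} \sum_{d\in\mathbb{Z}_{\ge 0}^{n-1}} Q^d\,I_d(q;P,T),
\]
where each $I_d$ is an explicit product of $q$-Pochhammer factors indexed by the positive roots of $\mathfrak{sl}_n$ together with the equivariant weights $T_j$. This is the standard formula of \cite{givental_lee} (which in turn goes back to Givental's mirror theorem for flag varieties); its key feature is that the ratios $I_{d+e_i}/I_d$ factor as elementary rational functions in $P_i/P_{i+1}$ and the $T_j$.

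Next, I would compute $\widehat{H}_1 J_{\Fl(n)}$ term by term using the action rules $q^{Q_j\partial_{Q_j}}(Q^d)=q^{d_j}Q^d$ and $q^{Q_j\partial_{Q_j}}\bigl(P_i^{\ln(Q_i)/\ln(q)}\bigr)=P_i^{\delta_{ij}}P_i^{\ln(Q_i)/\ln(q)}$ explained in the paper. Factoring out the common prefactor and collecting coefficients of $Q^d$, the identity $\widehat{H}_1 J_{\Fl(n)}=\bigl(\sum_j T_j\bigr)J_{\Fl(n)}$ reduces to a $d$-indexed family of rational identities in $q,P,T$ of the schematic form
\[
\sum_{i=1}^{n}\bigl(1-Q_{i-1}\bigr)^{1-\delta_{i,1}}\,\frac{P_i}{P_{i-1}}\cdot\frac{I_{d+\sigma_i}}{I_d} \;=\; \sum_{j=1}^{n}T_j,
\]
where $\sigma_i$ encodes the shift pattern produced by $q^{Q_i\partial_{Q_i}-Q_{i-1}\partial_{Q_{i-1}}}$. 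After substituting $Q_i=q^{-1}x_{i+1}/x_i$ back, the identity should become independent of $d$.

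The main obstacle is verifying this identity. The cleanest route is to recognize the left-hand side as a partial-fraction expansion: viewed as a rational function in $P_1,\dots,P_n$, it is holomorphic at infinity with prescribed simple poles along $P_i=q^{k}P_{i-1}$, and its residues can be read off from the explicit $q$-Pochhammer structure of $I_d$. One checks that these residues, and the value at infinity, match those of the constant $\sum_j T_j$, giving equality by Liouville. A conceptually cleaner route, available after the fact, is to identify $J_{\Fl(n)}$ with the relativistic $GL_n$ Whittaker function, whose eigenfunction property is a theorem of Kharchev--Lebedev and Etingof \cite{etingof}; see also \cite{gloI} for the Macdonald--Ruijsenaars perspective on the Hamiltonians. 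For the self-contained account the appendix promises, I would proceed by induction on $n$, peeling off the shift in the last coordinate so that the induction hypothesis handles the truncated sum over $i<n$ and the remaining $i=n$ term is a one-dimensional residue computation closing the identity.
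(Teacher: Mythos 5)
The paper does not actually prove this statement: it is quoted directly from \cite{givental_lee} (their Theorem~2) and used as a black box in the appendix. So the question is whether your sketch could stand as an independent proof, and as written it cannot, because its starting point does not exist. For $n\ge 3$ there is no closed formula for $J_{\Fl(n)}$ in which each coefficient $I_d$ is a single product of $q$-Pochhammer factors indexed by the positive roots, with the ratios $I_{d+e_i}/I_d$ elementary rational functions of the $P$'s and $T$'s. Such formulas hold for $\mathbb{P}^{n-1}$; for flag varieties the best one can hope for (via abelianization, itself a hard theorem in this setting) is that $I_d$ is a \emph{sum} over lifts of $d$ to the abelianized space of products of Pochhammer factors, and for a sum of this kind the ratio $I_{d+e_i}/I_d$ does not factor, so the term-by-term reduction to a $d$-independent rational identity, and the subsequent partial-fraction/Liouville argument, break down. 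Establishing that $J_{\Fl(n)}$ equals any such explicit series is essentially equivalent in difficulty to the theorem you are trying to prove.

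Your fallback --- identify $J_{\Fl(n)}$ with the relativistic $GL_n$ Whittaker function and quote its known eigenfunction property from \cite{etingof,gloI} --- is circular: that identification \emph{is} the content of Givental--Lee's theorem. Their actual argument runs in the opposite direction. They characterize $J_{\Fl(n)}$ by recursion relations coming from torus fixed-point localization (contributions of $T$-invariant curves) together with a polynomiality property of certain residues; they construct the Toda eigenfunction from a Whittaker vector in a quantum-group module; they verify that this eigenfunction satisfies the same recursion and polynomiality constraints; and they conclude equality by a uniqueness lemma. If you want a self-contained account, that characterization-plus-uniqueness scheme (or an argument in the spirit of Lemma~\ref{lemma:modQ}, where one only needs to compute eigenvalues modulo $Q$ for operators already known to commute with something annihilating $J$) is the viable route; a direct residue verification against an explicit hypergeometric series is not available for the full flag variety.
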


We also need the following lemma of Givental--Lee~\cite{givental_lee}.

\begin{Lemma}[{\cite[p.~9]{givental_lee}}]\label{lemma:modQ}
Let $D$ be a difference operator commuting with \smash{$\widehat{H}_1$}. Then, if $J$ is an eigenfunction of $D$ modulo $Q$, then $J$ is an eigenfunction of $D$ whose eigenvalue is the same as the one modulo $Q$.
\end{Lemma}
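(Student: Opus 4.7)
The plan is to use Theorem \ref{thm:GLeemain} together with the commutativity $[D,\widehat{H}_1]=0$ to bootstrap the eigenfunction property from order zero in $Q$ to all orders. Set $R := D J_{\Fl(n)} - \lambda J_{\Fl(n)}$, where $\lambda$ is the mod-$Q$ eigenvalue of $D$, viewed as an element of $\K_T(\pt)[q^{\pm 1}]$ independent of the $Q_i$. Since $\lambda$ commutes with $\widehat{H}_1$, one immediately computes
\[ \widehat{H}_1 R \;=\; (D-\lambda)\widehat{H}_1 J_{\Fl(n)} \;=\; (D-\lambda)\C^n\, J_{\Fl(n)} \;=\; \C^n R, \]
so the task reduces to showing that any solution $f$ of $\widehat{H}_1 f = \C^n f$ with $f\equiv 0 \pmod Q$ is identically zero.

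I would handle this uniqueness statement by first absorbing the prefactor of the $J$-function and writing $f = (1-q)\prod_i P_i^{\ln(Q_i)/\ln(q)}\sum_{d\geq 0} Q^d f_d$, where $f_0 = 0$ by hypothesis. The $q$-shift $q^{Q_i\partial_{Q_i}-Q_{i-1}\partial_{Q_{i-1}}}$ multiplies $Q^d$ by $q^{d_i-d_{i-1}}$ and shifts the prefactor by an overall factor of $P_i/P_{i-1}$. Extracting the coefficient of $Q^d$ in the eigenvalue equation and separating the diagonal contribution (the $1$ in each $(1-Q_{i-1})^{1-\delta_{i,1}}$) from the rest produces a recursion
\[ \Biggl(\sum_{i=1}^n q^{d_i-d_{i-1}}\tfrac{P_i}{P_{i-1}} - \C^n\Biggr) f_d \;=\; (\text{explicit expression in }f_{d'}\text{ with }|d'|<|d|), \]
using the convention $d_0 = d_n = 0$. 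An induction on $|d|$, starting from $f_0 = 0$, then forces $f_d = 0$ for all $d$, completing the proof.

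The main obstacle is the invertibility of the operator on the left side of the recursion whenever $d\ne 0$. At $d=0$ this operator vanishes, expressing the Borel-type identity $\sum_i P_i/P_{i-1}=\C^n$ which underlies Theorem \ref{thm:GLeemain} to leading order and is consistent with the freedom in choosing $f_0$. For $d\ne 0$, I would verify non-vanishing after $T$-equivariant localization: at a fixed point $e_w \in \Fl(n)$, the quotient $P_i/P_{i-1}$ restricts to the character $T_{w(i)}$, so the question becomes whether $\sum_i (q^{d_i-d_{i-1}} - 1)\, T_{w(i)}$ is invertible in an appropriate localization of $\K_T(\pt)[q^{\pm 1}]$. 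Since $d\ne 0$ together with $d_0 = d_n = 0$ forces at least one $d_i - d_{i-1} \ne 0$, the corresponding character coefficient is a nontrivial Laurent polynomial in $q$, and linear independence of distinct torus characters in $\K_T(\pt)$ gives non-vanishing; a standard localization of the base ring $\K_T(\pt)[q^{\pm 1}]$ then turns non-vanishing into invertibility, which is all that is needed to close the recursion.
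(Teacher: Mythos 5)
The paper does not actually prove this lemma --- it is quoted from Givental--Lee \cite[p.~9]{givental_lee} --- so there is no internal proof to compare against; your write-up is a correct reconstruction of the argument sketched in that reference. Both reductions are sound: since the mod-$Q$ eigenvalue $\lambda$ lies in $\K_T(\pt)\bigl[q^{\pm1}\bigr]$ (as it must for the statement to make sense), the commutator hypothesis and Theorem~\ref{thm:GLeemain} give $\widehat{H}_1R=\C^nR$ with $R\equiv 0 \pmod Q$, and everything rests on the uniqueness claim, i.e., the triangularity of $\widehat{H}_1-\C^n$ with respect to the $Q$-adic filtration. Your treatment of the diagonal term is also right: the shift $q^{Q_i\partial_{Q_i}-Q_{i-1}\partial_{Q_{i-1}}}$ multiplies $\prod_jP_j^{\ln(Q_j)/\ln(q)}Q^d$ by $q^{d_i-d_{i-1}}P_i/P_{i-1}$, the $d=0$ degeneration is exactly the classical relation $\sum_iP_i/P_{i-1}=\C^n$, and for $d\neq 0$ with $d_0=d_n=0$ at least one difference $d_i-d_{i-1}$ is nonzero, so $\sum_i\bigl(q^{d_i-d_{i-1}}-1\bigr)T_{w(i)}\neq 0$ by linear independence of the distinct characters $T_{w(1)},\dots,T_{w(n)}$ (the relation $T_1\cdots T_n=1$ in the $\SL_n$ case does not interfere). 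Non-vanishing at every fixed point makes the diagonal term a non-zero-divisor on $\K_T(\Fl(n))$ tensored with the relevant coefficient field --- which, since the $f_d$ involve rational functions of $q$ coming from $1/(1-qL)$, should be taken to be (a localization of) $\mathrm{Frac}(\K_T(\pt))(q)$; this uses that $\K_T(\Fl(n))$ is free over $\K_T(\pt)$ so that localization is injective. With that understood, the induction on $|d|$ closes and the proof is complete.
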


From this, we deduce that $J_{\Fl(n)}$ is an eigenfunction of the higher Toda Hamiltonians, using their commutativity with \smash{$\widehat{H}_1$} and by computing their eigenvalues modulo $Q$.

\begin{Corollary}\label{cor:Hk-eigenval} For any $1 \le k \le n$, the following holds:
\[ \widehat{H}_kJ_{\Fl(n)}=\wedge^k(\mathbb{C}^n) J_{\Fl(n)} . \]
\end{Corollary}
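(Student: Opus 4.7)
The plan is to apply Lemma~\ref{lemma:modQ} with $D=\widehat{H}_k$. This requires two ingredients: first, that $\widehat{H}_k$ commutes with $\widehat{H}_1$; and second, that $J_{\Fl(n)}$ is an eigenfunction of $\widehat{H}_k$ modulo $Q$, with eigenvalue $\wedge^k(\mathbb{C}^n)$. Commutativity of the full family $\widehat{H}_1,\dots,\widehat{H}_n$ is essentially free: as recalled in the paragraph preceding \eqref{eqn:hamil}, the original Hamiltonians $H_k$ arise as limits of (commuting) Macdonald operators, hence commute; the substitution $q\leftrightarrow q^{-1}$ together with $Q_i=q^{-1}x_{i+1}/x_i$ used to produce the $\widehat{H}_k$ preserves this commutativity.

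For the second ingredient, I would compute $\widehat{H}_k J_{\Fl(n)} \bmod Q$ directly from the explicit formula for $\widehat{H}_k$. Modulo $Q$ each factor $(1-Q_{i_l-1})^{1-\delta_{i_l-i_{l-1},1}}$ reduces to $1$. For the shift factor $\prod_{l=1}^k q^{Q_{i_l}\partial_{Q_{i_l}}-Q_{i_l-1}\partial_{Q_{i_l-1}}}$, the key observation (already emphasized in the text) is that $q^{Q_i\partial_{Q_i}}$ acts on $Q^d$ by $q^{d_i}$, which is identically $1$ on the $d=0$ part of $J_{\Fl(n)}$, whereas on the prefactor one has
\[
q^{Q_i\partial_{Q_i}}\bigl(P_j^{\log Q_j/\log q}\bigr)=P_j^{\delta_{ij}}\,P_j^{\log Q_j/\log q}.
\]
Using the convention $P_0=1$, this gives the mod-$Q$ identity
\[
\widehat{H}_k J_{\Fl(n)} \equiv \Biggl(\sum_{0=i_0<i_1<\dots<i_k\le n}\prod_{l=1}^k\frac{P_{i_l}}{P_{i_l-1}}\Biggr)\,J_{\Fl(n)}\pmod{Q}.
\]

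The bracketed sum is by construction the coefficient of $y^k$ in $\prod_{j=0}^{n-1}(1+yP_{j+1}/P_j)$, which by the Borel presentation of $\K_T(\Fl(n))$ (i.e.\ the $Q\to 0$ specialization of Theorem~\ref{thm:toda}) equals $\wedge^k(\mathbb{C}^n)$. Therefore $J_{\Fl(n)}$ is an eigenfunction of $\widehat{H}_k$ modulo $Q$ with eigenvalue $\wedge^k(\mathbb{C}^n)$, and Lemma~\ref{lemma:modQ} upgrades this to the honest eigenvalue equation $\widehat{H}_k J_{\Fl(n)}=\wedge^k(\mathbb{C}^n)\,J_{\Fl(n)}$.

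The main technical obstacle is keeping track of the action of the shift operators on the non-polynomial prefactor $\prod_i P_i^{\log Q_i/\log q}$: this factor is not a well-defined function but rather a formal symbol transforming by the rule above, and one must verify that the mod-$Q$ calculation goes through cleanly when the Novikov variables in the exponent meet the combinatorial factor $(1-Q_{i_l-1})$. Once this formal manipulation is justified (as in \cite{givental_lee}), the rest of the argument is a short combinatorial matching with the Borel presentation.
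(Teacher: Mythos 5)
Your proposal is correct and follows essentially the same route as the paper: commutativity of the $\widehat{H}_k$ (inherited from the Macdonald-operator limit), Lemma~\ref{lemma:modQ}, and the mod-$Q$ computation of the eigenvalue via the action of the shift operators on the prefactor $\prod_i P_i^{\ln Q_i/\ln q}$, with the resulting sum identified as $e_k(P_1/P_0,\dots,P_n/P_{n-1})=\wedge^k(\C^n)$ by the classical Whitney/Borel relations. The only cosmetic difference is that the paper treats $k=1$ separately by citing Theorem~\ref{thm:GLeemain} directly, whereas you run the same argument uniformly.
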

\begin{proof} The case $k=1$ is Theorem~\ref{thm:GLeemain}. Suppose $2 \le k \le n$. Since \smash{$\widehat{H}_k$} commutes with \smash{$\widehat{H}_1$}, we need only verify that $J_{\Fl(n)}$ is an eigenfunction of \smash{$\widehat{H}_k$} modulo $Q$, thanks to Lemma~\ref{lemma:modQ}.

To this end, we first observe
\begin{align*}
\widehat{H}_k J_{\Fl(n)}&{}=\widehat{H}_k\biggl((1-q)\prod_i P_i^{\frac{ln(Q_i)}{ln(q)}}\biggr) + o(Q_i) \\
&{}=
\sum_{0= i_0<\dots<i_k\leq n}\prod_{l=1}^k \frac{P_{i_l}}{P_{i_l-1}}\biggl((1-q)\prod_i P_i^{\frac{ln(Q_i)}{ln(q)}}\biggr)+o(Q_i) .
\end{align*}
Thus, modulo $Q$, we have the eigenvalue equation
\begin{align*}
\widehat{H}_k J_{\Fl(n)}&{}=\sum_{0= i_0<\dots<i_k\leq n}
\prod_{l=1}^k \frac{P_{i_l}}{P_{i_l-1}}J_{\Fl(n)}\\
&{}=
e_k\biggl(\frac{P_1}{P_0},\frac{P_2}{P_1},\dots,\frac{P_{n}}{P_{n-1}}\biggr)J_{\Fl(n)}=\wedge^k(\mathbb{C}^n)J_{\Fl(n)} .
\tag*{\qed}
\end{align*}
\renewcommand{\qed}{}
\end{proof}

We now use~\cite[Proposition~2.12]{Iritani:2013qka} which shows that the symbols of
Toda Hamiltonians give relations in quantum K theory.

\begin{Theorem}[Iritani--Milanov--Tonita]\label{thm:IMT}
Let $D=D\bigl(q^{Q_i\partial_{Q_i}},q,Q,\Lambda_i\bigr)$ be any $q$-difference operator
with coefficients in $\K_T(\pt)[q^{\pm 1}][\![Q_i]\!]$, such that it is regular
at $q=1$. Then
\[DJ_X=0\implies D\bigl(\widehat{P}_i,1,Q,\Lambda_i\bigr)=0 \in \QK_T(X) .\]
\end{Theorem}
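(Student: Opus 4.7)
The plan is to derive the ring relation from the annihilating $q$-difference operator via the Givental symplectic formalism for quantum $K$-theory, which relates $J_X$ to a ``quantum $K$-theoretic connection'' whose structure interpolates between shift operators on Novikov variables and quantum $K$ multiplication by line bundle generators.

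First, I would introduce a fundamental solution $S(q)$ of the quantum $K$-connection: an operator on $\K_T(X)[\![Q]\!]$, a priori a formal series in $(1-q)$, built from two-pointed $K$-theoretic Gromov--Witten correlators and characterized by its compatibility with the $q$-shifts. Its key property is the conjugation identity
\[ S(q)^{-1} \circ q^{Q_i \partial_{Q_i}} \circ S(q) \big|_{q=1} = \wh{P}_i \star (\cdot), \]
expressing that the shift operators, after conjugation and specialization at $q=1$, become quantum $K$ multiplications by the line bundle generators $\wh{P}_i$ of $\QK_T(X)$. I would further recall the identification of $J_X$ as $S(q)^{-1}$ applied to the distinguished element $(1-q) \prod_i P_i^{\ln(Q_i)/\ln(q)}$.

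Second, I would use the hypothesis $DJ_X=0$ together with the conjugation identity. After conjugating $D$ by $S(q)$ and specializing $q=1$, which is legitimate because $D$ is regular at $q=1$, the resulting operator is $D(\wh{P}_i,1,Q,\Lambda_i)$ acting on $\QK_T(X)$ as quantum multiplication. Evaluating on the identity class $1 \in \QK_T(X)$ then gives
\[ D(\wh{P}_i,1,Q,\Lambda_i) \cdot 1 = 0, \]
which is exactly the desired ring relation.

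The main obstacle is constructing $S(q)$ and proving the conjugation identity. This is the technical heart of \cite{Iritani:2013qka} and relies on $K$-theoretic analogues of the topological recursion relations and of the Givental symplectic formalism. The finite-difference (rather than differential) nature of the operators, combined with the subtle asymptotic behavior of $J_X$ and of the factor $\prod_i P_i^{\ln(Q_i)/\ln(q)}$ near $q=1$, makes this limit more delicate than its cohomological counterpart.
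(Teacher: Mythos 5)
The paper does not actually prove this statement: it is quoted verbatim from \cite[Proposition~2.12]{Iritani:2013qka}, and the only original content surrounding it is the remark that the argument of loc.\ cit.\ extends to the equivariant setting and to the small quantum K ring and small $J$-function (with details deferred to \cite{huq2024relations}). So there is no in-paper proof to compare yours against. Your outline does capture the correct skeleton of the Iritani--Milanov--Tonita argument: a fundamental solution $S(q)$ of the quantum K-theoretic $q$-difference connection built from two-point correlators, a conjugation identity turning the shift operators into quantum multiplication operators at $q=1$, the identification of $J_X$ as $S(q)^{-1}$ applied to the distinguished element, and evaluation on the unit class.

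As a proof, however, the proposal has a genuine gap that you yourself flag: the two assertions doing all the work --- the existence of $S(q)$ with the stated conjugation property, and the legitimacy of the $q\to 1$ specialization --- are precisely the content of the theorem, and you only assert them. Two further points deserve care. First, the conjugation identity as you write it cannot be taken literally: $S(q)^{-1}\circ q^{Q_i\partial_{Q_i}}\circ S(q)$ is singular as $q\to 1$, and in \cite{Iritani:2013qka} the operator $A_{i,\mathrm{com}}$ is extracted via a Birkhoff-type factorization/Lax equation; the hypothesis that $D$ is regular at $q=1$ is exactly what makes the specialization of the conjugated $D$ meaningful. Second, your phrase ``line bundle generators $\widehat{P}_i$'' conflates $\widehat{P}_i$ with $P_i$: a priori $\widehat{P}_i$ is a $Q$-deformation of the line bundle class, and the identity $\widehat{P}_i=P_i$ for flag varieties is a separate nontrivial statement (Proposition~\ref{thm:ACTI-Kato}, due to \cite{anderson2022finite} and \cite{kato:loop}) that is not part of Theorem~\ref{thm:IMT}. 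Finally, to match the use made of the theorem here you would also need to address the extension from the big, non-equivariant setting of \cite{Iritani:2013qka} to the small equivariant one, which your sketch does not mention.
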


\begin{Remark}
The result of Iritani, Milanov and Tonita is stated non-equivariantly, and for the
{\em big} quantum K ring and the corresponding big $J$ function. However, an inspection of their proof shows that it works in the equivariant situation as well.
Furthermore, if one starts with the small quantum K ring, then all arguments extend
to that situation, and the result also holds for the small quantum K ring and the small $J$ function. For further details, see~\cite{huq2024relations}.
\end{Remark}
One subtle point is that $\widehat{P}_i$ is a certain $Q$-deformation
of the line bundle $P_i$: it is the restriction to the small
quantum K ring
of an operator denoted by
$A_{i,{\rm com}}$ in~\cite[Corollary~2.9]{Iritani:2013qka}, which
arises as a solution to a certain Lax-type equation.
However, results of both Anderson, Chen, Tseng, and Iritani
in~\cite[Lemma~6]{anderson2022finite}, and also
by Kato in~\cite[Theorem~1.35]{kato:loop}
show that in fact no quantization is needed.

\begin{Proposition}\label{thm:ACTI-Kato} For the flag variety $\Fl(n)$,
$\widehat{\det(\mathcal{S}_i)}=\det(\mathcal{S}_i)$.
\end{Proposition}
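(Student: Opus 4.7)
The plan is to reduce the statement to two existing results in the literature, as suggested by the surrounding text. First I would unpack the definition: $\widehat{\det(\mathcal{S}_i)}$ is, by construction, the restriction to the small quantum K ring of the commutative operator $A_{i,\mathrm{com}}$ from~\cite[Corollary~2.9]{Iritani:2013qka}, which arises as the commutative solution to a Lax-type equation built from the fundamental solution of the $J$-function's difference system. This operator reduces to classical multiplication by $\det(\mathcal{S}_i)$ modulo the Novikov variables $Q_j$, but a priori carries nontrivial $Q$-corrections encoded in the Lax equation.

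Next, I would invoke the K-theoretic divisor axiom of Anderson--Chen--Tseng--Iritani~\cite[Lemma~6]{anderson2022finite}, which asserts that for the line bundles $\det(\mathcal{S}_i)$ on $\Fl(n)$, the operator $A_{i,\mathrm{com}}$ restricted to the small quantum K ring coincides with classical quantum $\star$-multiplication by $\det(\mathcal{S}_i)$. An independent proof of the same fact is obtained by Kato in~\cite[Theorem~1.35]{kato:loop} via the semi-infinite flag variety technology; either reference suffices to conclude $\widehat{\det(\mathcal{S}_i)} = \det(\mathcal{S}_i)$ as elements of $\QK_T(\Fl(n))$. In practice, I would state the proposition as a direct citation of one of these two results, with the other noted as an alternative reference.

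The main obstacle in carrying out this plan is purely bookkeeping, but it is not trivial: one must reconcile conventions across the three references. The Lax-type operator of~\cite{Iritani:2013qka} is defined implicitly via the fundamental solution of the $J$-function's difference equations, whereas~\cite{anderson2022finite, kato:loop} phrase the divisor axiom directly in terms of quantum K products with tautological line bundles. I would need to verify that the normalization of the small $J$-function used in this appendix (including the prefactors $(1-q)$ and $\prod_i P_i^{\ln(Q_i)/\ln(q)}$) matches the $J$ (resp.\ $\tilde J$) conventions of those references, and that the identification $\widehat{P}_i \leftrightarrow A_{i,\mathrm{com}}\big|_{\mathrm{small}}$ lines up with the Picard generator $\det(\mathcal{S}_i)$ on both sides. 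Once these conventions are aligned, the statement is an immediate consequence of either cited result and no further argument is required.
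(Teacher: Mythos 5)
Your proposal matches the paper's treatment: the paper gives no independent argument for this proposition and simply derives it from the preceding identification of \smash{$\widehat{P}_i$} with the restriction of $A_{i,{\rm com}}$ from~\cite[Corollary~2.9]{Iritani:2013qka}, citing~\cite[Lemma~6]{anderson2022finite} and~\cite[Theorem~1.35]{kato:loop} as the sources showing no quantization is needed. Your additional remarks about reconciling the $J$-function normalizations across the references are a reasonable caveat but do not change the substance.
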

We note in passing that an analogue of Proposition~\ref{thm:ACTI-Kato} holds for any homogeneous
space~$G/P$, but we do not need this generality here.

Combining Theorem~\ref{thm:IMT} and Proposition~\ref{thm:ACTI-Kato} with Corollary~\ref{cor:Hk-eigenval}
yields the following corollary.
\begin{Corollary}
The following identities hold
in $\QK_T(\Fl(n))$:
\begin{equation*}
 \sum_{0= i_0<\dots<i_k\leq n}\ \prod_{l=1}^{k}\frac{P_{i_l}}{P_{i_{l}-1}}(1-Q_{i_l-1})^{1-\delta_{i_l-i_{l-1},1}}=\wedge^k \C^n,\qquad k=1,\dots,n,
\end{equation*}
where $P_0=P_n=1$.
\end{Corollary}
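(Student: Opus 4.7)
The proof is essentially a packaging step: all the substantive ingredients have been accumulated in the appendix, and the goal is to combine Corollary~\ref{cor:Hk-eigenval}, Theorem~\ref{thm:IMT}, and Proposition~\ref{thm:ACTI-Kato} into a single statement by computing symbols.

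The plan is to start from Corollary~\ref{cor:Hk-eigenval}, which gives the eigenvalue equations
\[
\bigl(\widehat{H}_k - \wedge^k(\C^n)\bigr) J_{\Fl(n)} = 0 \qquad (1 \le k \le n).
\]
The operator $D_k := \widehat{H}_k - \wedge^k(\C^n)$ is manifestly a $q$-difference operator whose coefficients lie in $\K_T(\pt)[q^{\pm 1}][\![Q_1,\dots,Q_{n-1}]\!]$; since the coefficients $(1-Q_{i_l-1})^{1-\delta_{i_l-i_{l-1},1}}$ and the scalar $\wedge^k(\C^n)$ involve no factor of $(1-q)^{-1}$, the operator $D_k$ is regular at $q=1$. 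This lets me invoke Theorem~\ref{thm:IMT} to conclude that the symbol $D_k(\widehat{P}_i,1,Q,\Lambda_i) = 0$ holds in $\QK_T(\Fl(n))$.

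Next I would unpack what this symbol is. By construction, the shift operator $q^{Q_{i_l}\partial_{Q_{i_l}}-Q_{i_l-1}\partial_{Q_{{i_l}-1}}}$ acts on the prefactor $\prod_j P_j^{\ln(Q_j)/\ln(q)}$ in $J_{\Fl(n)}$ by multiplication by $P_{i_l}/P_{i_l-1}$ (with the convention $P_0=1$); this is precisely the rule identifying the symbol of $q^{Q_i\partial_{Q_i}}$ with the line-bundle class $P_i=\det\cS_i$, and replacing $q$ by $1$ makes the shift trivial. Hence the symbol of $\widehat{H}_k$ at $q=1$, with $\widehat{P}_i$ substituted, is
\[
\sum_{0 = i_0 < \dots < i_k \le n} \prod_{l=1}^{k}\frac{\widehat{P}_{i_l}}{\widehat{P}_{i_l-1}} \prod_{l=1}^{k}(1-Q_{i_l-1})^{1-\delta_{i_l-i_{l-1},1}}.
\]
Invoking Proposition~\ref{thm:ACTI-Kato} to identify $\widehat{P}_i$ with $P_i = \det\cS_i$ (so that no quantum correction to the line bundles is needed), the identity $D_k(\widehat{P}_i,1,Q,\Lambda_i)=0$ becomes precisely the stated equality $T^{(n)}_k = \wedge^k\C^n$ in $\QK_T(\Fl(n))$. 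Once one sets $k=n$, the sum collapses to the single term $i_l=l$, giving $P_n=\wedge^n\C^n$; so setting $P_n=1$ amounts to the usual $\SL_n$-normalization.

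The main conceptual obstacle has already been resolved upstream: the nontrivial input is Proposition~\ref{thm:ACTI-Kato}, which lets one replace the a priori $Q$-deformed operators $\widehat{P}_i$ by their classical counterparts; without this, one would only obtain the Toda relations in terms of opaque Lax-type quantizations of the line bundles. The remaining step here — bookkeeping the symbol computation and regularity at $q=1$ — is purely formal.
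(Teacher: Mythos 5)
Your argument is correct and is exactly the route the paper takes: the paper derives this corollary by combining Corollary~\ref{cor:Hk-eigenval}, Theorem~\ref{thm:IMT}, and Proposition~\ref{thm:ACTI-Kato}, and your write-up simply fills in the (purely formal) symbol computation and the regularity check at $q=1$ that the paper leaves implicit. No gaps.
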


\begin{Remark}
Theorem 4.9 of~\cite{koroteev} gives a presentation of the quasimap quantum $K$-ring of $T^*Fl$ whose limit to the is described in Theorem 5.5. The relations are based on the trigonometric Ruijsenaars--Schneider model. After further taking into account a restriction from $\GL_n$ to $\SL_n$, the `Toda limit' recovers the relations in this paper. We are grateful to P. Koroteev who explained this procedure to us.
\end{Remark}

\subsection*{Acknowledgements}

The authors thank Dave Anderson, Linda Chen, Takeshi Ikeda, Shinsuke Iwao, Peter Koroteev, Takafumi Kouno, Satoshi Naito, Daisuke Sagaki, Mark Shimozono, and Kohei Yamaguchi for useful discussions, and sharing insights related to this work. L.M.\ was partially supported by NSF grant DMS-2152294, and gratefully acknowledges the support of Charles Simonyi Endowment, which provided funding for the membership at the Institute of Advanced Study during the 2024-25 Special Year in `Algebraic and Geometric Combinatorics'. D.O.\ gratefully acknowledges support from the Simons Foundation. Finally, we are grateful
to two anonymous referees for their valuable suggestions, which helped us improve the exposition of this paper.

\pdfbookmark[1]{References}{ref}
\LastPageEnding

\end{document}